\newtheorem{theorem}{Theorem}[section]
\newtheorem{lemma}[theorem]{Lemma}
\newtheorem{proposition}[theorem]{Proposition}
\newtheorem{corollary}[theorem]{Corollary}
\theoremstyle{definition}
\newtheorem{definition}[theorem]{Definition}
\newtheorem{example}[theorem]{Example}
\newtheorem{remark}[theorem]{Remark}
\newtheorem *{Theorem 1}{Theorem 1}
\newtheorem *{Theorem 2}{Theorem 2}
\newtheorem *{Theorem 5}{Theorem 5}
\newtheorem *{Theorem 3}{Theorem 3}
\newtheorem *{Theorem 4}{Theorem 4}
\newtheorem *{Problem 1}{Problem 1}
\newcommand{\C}{{\mathbb C}}
\newcommand{\RNum}[1]{\uppercase\expandafter{\romannumeral #1\relax}}
\newcommand{\rNum}[1]{\lowercase\expandafter{\romannumeral #1\relax}}
\begin{document}
\title[Simple twisted group algebras of dimension $p^4$]{Simple twisted group algebras of dimension $p^4$ and their semi-centers}
\address{Institute of Algebra and Number Theory, Pfaffenwaldring 57\\
University of Stuttgart, Stuttgart 70569, Germany}
\author{Ofir Schnabel}
\email{os2519@yahoo.com}
\thanks{This work has been supported by the Minerva Stiftung}

\begin{abstract}
For simple twisted group algebra over a group $G$, if $G^{\shortmid}$ is  Hall subgroup of $G$ then the semi-center is simple.
Simple twisted groups algebras correspond to groups of central type.
We classify all groups of central type of order $p^4$ where $p$ is prime and use this  to show that
for odd primes $p$ there exists a unique group $G$ of order $p^4$ such that 
there exists simple twisted group algebra over $G$ with a commutative semi-center. 
Moreover, if $1< |G|< 64$, then the semi-center of simple twisted group algebras over $G$ is non-commutative and this bounds are strict.
\end{abstract}

\maketitle
\bibliographystyle{abbrv}
\noindent \textbf {2010 Mathematics Subject Classification:} 
16S35, 20C25, 20E99.
\section{Introduction}\pagenumbering{arabic} \setcounter{page}{1}
Let $G$ be a finite group. A \textit{twisted group algebra}
over $\mathbb{C}$, denoted by $\mathbb{C}^fG$, is an associative algebra with
basis $\{u_g\}_{g\in G}$. The multiplication is defined on basis
elements as follows. For any $x,y\in G$
\begin{equation}\label{eq:multtga}
u_xu_y=f(x,y)u_{xy}, 
\end{equation}
and it is extended distributively. Here, $f\in Z^2(G,\mathbb{C}^*)$. That is, $f$ is a $2$-cocycle.
By a generalization of
Maschke's theorem, complex twisted group algebras are semi-simple
\cite[Theorem 3.2.10]{karpilovsky} and therefore, by the
Artin-Wedderburn theorem it is isomorphic to a direct sum of matrix
algebras. 
For a non-trivial group $G$, the group algebra $\mathbb{C}G$ is not simple. However, it turns
out that $2$-cocycles, $f\in Z^2(G,\mathbb{C}^*)$ may exist such that
$\mathbb{C}^fG$ is simple. A group $G$ admitting such a phenomenon is called of
{\it central type}
\footnote{ Classically, a group $G$ is said to be of central
type if it has a faithful irreducible character
$\chi$ such that $\chi(1)^2 = |G:Z|$, where $Z = Z(G)$.
In this paper however we call $G/Z$ (which is sometimes called ``Central-type factor group'') of central type.}, and the 2-cocycle $f\in Z^2(G,\C^*)$
is called {\it nondegenerate}. Evidently, the
size of any group of central type is a square.

It turns out that twisted group algebras and groups of central type play a major role in the understanding of other algebraic objects,
as graded algebras (see \cite{MR1941224},\cite{EK13},\cite{ginosargradings}), semi-invariants of matrices (see \cite{ginosar2012semi})
and twisted category algebras (see \cite{Danz},\cite{Linckelmann}).

The classification of the {\it abelian} groups of central type is well known (see \S\ref{main}).
It is much harder to understand non-abelian groups of central type.
In fact, their complete classification up to isomorphism looks like an impossible task.
In \cite{MR652860} R. Howlett and M. Isaacs proved, using the classification of finite simple groups,
that groups of central type are solvable.
Another important result is that a cocycle $f\in Z^2(G,\C^*)$ is nondegenerate if and only if
its restriction to any Hall subgroup is nondegenerate (see \cite[Corollary 4]{DeMeyer}, \cite[Lemma 2.7]{MR2731925}).
In \cite{ginosargradings} we classify all the groups of central type of order $n^2$ where $n$ is a square-free number.
Also, in \cite[Theorem C]{ginosar2012semi} we show that for any group of central type $G$, $Z(G)$ embeds into $\hat{G}$,
where $Z(G)$ is the center of $G$ and $\hat{G}$ is the group of $1$-dimensional characters of $G$.

In the first part of this paper we classify the groups of central type of order $p^4$ where $p$ is a prime number.
In order to do so we use the classification of groups of order $p^4$ \cite[p.100-102]{Burnside}. For convenience we 
change the notations from \cite[p.100-102]{Burnside} and rewrite these groups. The corresponding tables are Table~\ref{tab:p4} for the odd case and
Table~\ref{tab:16} for the even case.
The following two theorems classify the groups of central type of order $p^4$ where $p$ is a prime number.
\begin{Theorem 1}
Let $p$ be an odd prime.
Up to isomorphism, the groups of central type of order $p^4$ are exactly the groups  $G_{(\text{iii})},G_{(\text{v})},G_{(\text{viii})},G_{(\text{xiv})},G_{(\text{xv})}$ in Table~\ref{tab:p4}.
\end{Theorem 1}
\begin{Theorem 2}
Up to isomorphism, the groups of central type of order $16$ are exactly the groups  $G_{(\text{iii})},G_{(\text{v})},G_{(\text{ix})},G_{(\text{x})}$ in Table~\ref{tab:16}.
\end{Theorem 2}
In the proof of Theorem 1 and Theorem 2 we are basically using the following three different methods:\\
Let $G$ be a group of order $p^4$ which is \textit{not} of central type. Using Lemma~\ref{lemma:trivcentralizer}, Lemma~\ref{lemma':toobycyc} and \cite[Theorem C]{ginosar2012semi}), which
deal with groups of central type of any order, we try to show that $G$ is not a group of central type.
If this fails, we show, using alternative definition of nondegeneracy (see Definition~\ref{def:altdef}),
that for any $f\in Z^2(G,\C ^*)$, the dimension of the center of $\C ^fG$ is greater than $1$.\\
In order to show that a group $G$ of order $p^4$ is of central type we construct a nondegenerate cohomology class $[f]\in H^2(G,\C ^*)$.
We do this by using a construction of crossed product which arises from an action of a group $H$ on a ring $R$ by an automorphism of $R$.

The twisted group
algebra $\mathbb{C}^fG$ is equipped with a $\mathbb{C}G$-module
structure defined by,
\begin{equation}\label{eq:conma}
g(u_h):=u_gu_hu_g^{-1}.
\end{equation}
This furnishes $\mathbb{C}^fG$ with a
$G$-module algebra structure.

For any $G$-module-algebra $A$ over a field $K$,
a nonzero element $a\in A$ is called {\it semi-invariant} if there exists $\lambda \in
\hat{G}:=\text{Hom}(G,K^*)$ (the {\it{weight}} of $a$), such
that for any $g\in G$, $$g(a)=\lambda (g)\cdot a.$$ The subspace
spanned by all the semi-invariant elements is a subalgebra of $A$ called the \textit{semi-center} of $A$,
and denoted by Sz$(A):=$Sz$_G(A)$.
Then $$\text{Sz}(A)=\oplus_{\lambda\in \hat{G}}A _{\lambda},$$ where $A _{\lambda}$ is the
subspace of all the semi-invariant elements in $A$ of weight $\lambda$ (and zero).
This is a natural grading of the semi-center
by the group $\hat{G}$ of 1-dimensional $G$-characters.

In the second part of this paper we study the semi-center of twisted group algebras under the conjugation action~\eqref{eq:conma}.
Semi-invariants and the semi-center of module-algebras are investigated mainly with regard to enveloping
algebras of finite dimensional Lie algebras, see, e.g.
\cite{vernick, dixmier, smith},
and with regard to group algebras, see, e.g. \cite{MR1476385,Wauters1999,Wauters2001}.
In \cite{ginosar2012semi} Y. Ginosar and the author generalized a result of D. Passman and P. Wauters (see \cite{MR1476385}), by showing that if
the Artin-Wedderburn decomposition of $\mathbb{C}^fG$ is
$$\mathbb{C}^fG=\oplus _{i=1}^r M_{n_i}(\mathbb{C}),$$
then
\begin{equation}\label{eq:sztga}
\text{Sz}\left(\mathbb{C}^fG\right)=\oplus _{i=1}^r \mathbb{C}^{f_i}G_i,
\end{equation}
where the $G_i$'s are subgroups of $\hat{G}$, and $f_i\in Z^2(G_i,\mathbb{C}^*)$.
In particular, if $\mathbb{C}^fG$ is simple then its semi-center is a twisted group algebra, that is,
\begin{equation}\label{eq:scsimp}
\text{Sz}(\mathbb{C}^fG)=\mathbb{C}^{\hat{f}}\hat{G}.
\end{equation}

We study the following problem.
\begin{Problem 1}
Which cocycles $f\in Z^2(G,\C^*)$ admit
\begin{enumerate}[(i)]
\item A commutative semi-center of $\C ^fG$?
\item A simple semi-center of $\C ^fG$?
\end{enumerate}
\end{Problem 1}
By~\eqref{eq:sztga} a necessary condition for affirmative answer to Problem 1(ii) is that $f$ is nondegenerate.
Regarding Problem 1(ii) we prove the following
\begin{Theorem 3}
Let $\C ^fG$ be a simple twisted group algebra. If the restriction of the nondegenerate cocycle $f$ to $G^{\shortmid}$
is also nondegenerate then the semi-center of $\C ^fG$ is simple.
In particular, if $G^{\shortmid}$ is a Hall subgroup of
$G$, then the semi-center of $\C ^fG$ is simple.
\end{Theorem 3}
Problem 1(i) is particularly interesting when restricting to nondegenerate cocycles.
By Theorem 1, the group $G_{(\text{xv})}$ in Table~\ref{tab:p4} is of central type.
In Theorem~\ref{th:existp4} we show that there exist a nondegenerate cocycle $f\in Z^2(G_{(\text{xv})},\mathbb{C}^*)$
such that Sz$(\mathbb{C}^fG_{(\text{xv})})$ is commutative and in Proposition~\ref{th:p^4} we show that if $G\not \cong G_{(\text{xv})}$ is a group of central type of order $p^4$,
then for any nondegenerate cocycle
$f \in Z^2(G,\C^*)$, Sz$(\mathbb{C}^fG)$ is non-commutative. We get the following theorem.
\begin{Theorem 4}
Let $G$ be a group of central type of order $p^4$ where $p$ is prime. There exists a nondegenerate cocycle $f\in Z^2(G,\mathbb{C}^*)$
such that Sz$(\mathbb{C}^fG)$ is commutative if and only if  $p$ is odd and $G$ is isomorphic to the group $G_{(\text{xv})}$ in Table~\ref{tab:p4}
\end{Theorem 4}
In view of Theorem 4 and Remark~\ref{remark:sf}, for any group of central type $G$, if\\
$1< |G|< 64$, then the semi-center of simple twisted group algebras over $G$ is non-commutative.
In the following theorem we present a group $G$ of order $64$ such that there exists a commutative semi-center of simple twisted group algebra over $G$.
\begin{Theorem 5}
Let
\begin{equation}
G=\langle x_1,x_2,x_3,x_4,x_5,x_6 \rangle,
\end{equation}
such that $ x_1,x_2,x_3$ are central and 
$$x_i^2=1,\quad [x_4,x_5]=x_1,\quad [x_4,x_6]=x_2,\quad [x_5,x_6]=x_3.$$
There exists a nondegenerate cocycle $f \in Z^2(G,\C^*)$
such that Sz$(\mathbb{C}^fG)$ is commutative.
\end{Theorem 5}
By Theorem 4 and Remark~\ref{remark:sf}, the group $G$ described in Theorem 5 is a minimal non-trivial group with the property that the semi-center of a simple
twisted group algebra over a group is commutative.

{\bf Acknowledgements.}
A part of this paper appears in the author's Ph.D. dissertation under the supervision of Y. Ginosar and the rest of this paper was influated by it.
The author is also grateful to
F. Cedo, M. Kochetov and S. Koenig for their valuable comments.
Lastly, I want to thank the referee for reviewing the first draft of this paper and for his valuable suggestions.
\section{preliminaries}\label{preliminaris}
For any $f\in Z^2(G,\mathbb{C}^*)$ define an antisymmetric form from
the set of commuting pairs in $G$ to $\mathbb{C}^*$ as follows:
\begin{equation}\label{eq:form}
\alpha _f(g_1,g_2):=f(g_1,g_2)f(g_2,g_1)^{-1}.
\end{equation}
If $G$ is abelian group then $\alpha _f$ determines the cohomology
class of $f$. By the definition of $\alpha _f$, if $[f_1]=[f_2]\in H^2(G,\mathbb{C})$, then for every commuting
elements $g_1,g_2\in G$, $\alpha _{f_1}(g_1,g_2)=\alpha
_{f_2}(g_1,g_2)$.
In other words, there is a well-defined function $[f]\rightarrow
\alpha _f$ from $H^2(G,\mathbb{C})$ to the antisymmetric forms.
Throughout this paper we will deal with nondegeneracy of cocycles $f$ using the notion of $f$-regularity.
\begin{definition}\cite[\S 2]{Oystaeyen}
Let $f\in Z^2(G,\mathbb{C}^*)$. An element $g\in G$ is called
\textbf{$f$-regular} if $\alpha _f(g,h)=1$ for every $h\in C_G(g)$ (the centralizer of $g\in
G$).
\end{definition}
In other words, an element $g\in G$ is $f$-regular if and only if
for every element $h\in C_{G}(g)$, $u_gu_h=u_hu_g$ (see~\eqref{eq:multtga}) in the twisted
group algebra $\mathbb{C}^fG$. 
It is easy to show that regularity is a class property, both cohomological and conjugacy.

It is well known that the dimension of the center of $\mathbb{C}^fG$ is equal to the number of $f$-regular conjugacy classes of $G$ (see \cite[Theorem 2.4]{Oystaeyen}).
This leads to an alternative definition of non-degeneracy of a 2-cocycle $f\in Z^2(G,\C ^*)$.
\begin{definition}\label{def:altdef}
A $2$-cocycle $f\in Z^2(G,\C ^*)$ is called {\it non-degenerate} if
$1\in G$ is the only $f$-regular element in $G$.
\end{definition}
We wish to formulate two known results which we will use in order to show that some groups are not of central type.
\begin{lemma}\label{lemma:trivcentralizer}
Let $G$ be a group of central type. Then, $C_G(g)$ is non-cyclic for any non-trivial element $g\in G$. 
\end{lemma}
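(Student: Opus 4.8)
The plan is to use the characterization of non-degeneracy via $f$-regular elements (Definition~\ref{def:altdef}) together with the fact that $f$-regularity is a class property. Let $G$ be a group of central type, so there exists a nondegenerate cocycle $f \in Z^2(G,\C^*)$, meaning $1$ is the only $f$-regular element of $G$. Suppose toward a contradiction that some non-trivial $g \in G$ has $C_G(g)$ cyclic. First I would observe that since $g$ is central in $C_G(g)$ (indeed $C_G(C_G(g)) \supseteq C_G(g)$ when the centralizer is abelian), the restriction of the form $\alpha_f$ to the abelian subgroup $H := C_G(g)$ is relevant: an element $h \in H$ fails to be $f$-regular only if $\alpha_f(h,h') \neq 1$ for some $h' \in C_G(h)$, and in particular for some $h' \in H$ (since $H$ is abelian, $H \subseteq C_G(h)$).

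The key step is that $\alpha_f$ restricted to the cyclic group $H = \langle h_0 \rangle$ must be trivial. Indeed, any antisymmetric bilinear form on a cyclic group is trivial: if $H = \langle h_0\rangle$, then $\alpha_f(h_0,h_0) = 1$ by antisymmetry (every element commutes with itself, and $\alpha_f(x,x) = f(x,x)f(x,x)^{-1} = 1$), and bi-multiplicativity of $\alpha_f$ on the abelian group $H$ forces $\alpha_f(h_0^a, h_0^b) = \alpha_f(h_0,h_0)^{ab} = 1$ for all $a,b$. Here I should be slightly careful: $\alpha_f$ is bi-multiplicative on commuting pairs only when the relevant cocycle identities hold; since $H$ is abelian, $\alpha_f|_{H\times H}$ is indeed a genuine alternating bicharacter, so this goes through.

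Now combine: take any non-trivial $h \in H = C_G(g)$ — for instance $h = g$ itself. For every $h' \in C_G(h)$ we need to check $\alpha_f(h, h') = 1$. The issue is that $C_G(h)$ might be strictly larger than $H = C_G(g)$, so I cannot immediately conclude. To fix this, I would instead pick a generator $h_0$ of the cyclic group $C_G(g)$, so that $C_G(h_0) \supseteq C_G(g) \ni g$ gives $C_G(h_0) = C_G(g)$ as well (since $h_0$ and $g$ generate the same centralizer — any element commuting with $h_0$ commutes with all of $\langle h_0 \rangle \ni g$, and conversely). Thus $C_G(h_0) = C_G(g) = H = \langle h_0 \rangle$, which is abelian, so $\alpha_f(h_0, h') = 1$ for all $h' \in C_G(h_0)$ by the previous paragraph. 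Hence $h_0$ is $f$-regular and non-trivial, contradicting non-degeneracy of $f$.

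The main obstacle, and the point requiring the most care, is the matching of centralizers: one must argue that if $C_G(g)$ is cyclic then we may replace $g$ by a generator $h_0$ whose centralizer coincides with $C_G(g)$, so that the ambient centralizer $C_G(h_0)$ over which $f$-regularity is tested is exactly the small abelian (cyclic) group on which $\alpha_f$ is forced to vanish. Once that reduction is in place, the vanishing of every alternating bicharacter on a cyclic group finishes the argument immediately. I would also remark that this shows more: $C_G(g)$ cannot even be \emph{abelian of rank $1$}, and the same style of argument underlies Lemma~\ref{lemma':toobycyc}.
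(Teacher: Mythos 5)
Your proposal is correct and takes essentially the same route as the paper: both arguments boil down to the fact that $\alpha_f$ vanishes on any cyclic subgroup (equivalently, $u_h$ commutes with every $u_{h^i}$), so a non-trivial element whose centralizer is cyclic yields a non-trivial $f$-regular element, contradicting nondegeneracy. The only cosmetic difference is that the paper observes directly that $g$ itself is $f$-regular (since $g$ and every element of $C_G(g)$ are powers of a common generator, the detour through $h_0$ and the identity $C_G(h_0)=C_G(g)$ is not needed), but your version is equally valid.
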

\begin{proof}
Since for any cocycle $f\in Z^2(G,\mathbb{C}^*)$ and any $h\in G$, $u_h$ commutes with $u_{h^i}$,
we conclude that if $C_G(g)=\langle h \rangle$, then $g$ is $f$-regular.
In particular, in this case, there are no nondegenerate $f\in Z^2(G,\mathbb{C}^*)$. 
\end{proof}
By \cite[Proposition 1.2]{MR3210715} if there exists a subgroup $H$ of $G$ such that the 
restriction of $f$ to $H$ is trivial then $|H|\leq \sqrt{|G|}$. Since the cohomology of cyclic groups is trivial,
we obtain following lemma.
\begin{lemma}\label{lemma':toobycyc}
Let $G$ be a group of central type of order $n^2$. Then, the order of any element in $G$ is less or equal $n$. 
\end{lemma}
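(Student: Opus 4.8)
The plan is to combine the cited inequality from \cite[Proposition 1.2]{MR3210715} with the triviality of the Schur multiplier of cyclic groups. Suppose $G$ is of central type of order $n^2$, and let $f\in Z^2(G,\mathbb{C}^*)$ be a nondegenerate cocycle witnessing this. Let $g\in G$ be an arbitrary element and set $H=\langle g\rangle$, a cyclic subgroup of order $m=\mathrm{ord}(g)$. The key observation is that $H^2(H,\mathbb{C}^*)$ is trivial because $H$ is cyclic; indeed $H^2(\mathbb{Z}/m,\mathbb{C}^*)\cong \mathbb{C}^*/(\mathbb{C}^*)^m=1$ since $\mathbb{C}^*$ is divisible. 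Hence the restriction $\mathrm{res}^G_H f\in Z^2(H,\mathbb{C}^*)$ is a coboundary, i.e. cohomologically trivial.

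Now I invoke \cite[Proposition 1.2]{MR3210715} in the form quoted just before the lemma statement: if $H\leq G$ is a subgroup on which the restriction of a nondegenerate cocycle $f$ becomes trivial, then $|H|\leq \sqrt{|G|}$. Applying this to our $H=\langle g\rangle$ gives $m=|H|\leq \sqrt{|G|}=\sqrt{n^2}=n$. Since $g$ was arbitrary, every element of $G$ has order at most $n$, which is exactly the claim.

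There is really no substantive obstacle here: the lemma is a direct corollary of the two cited facts, and the only thing to be careful about is the precise hypothesis of \cite[Proposition 1.2]{MR3210715} — namely that the relevant inequality holds whenever $f$ is nondegenerate \emph{and} its restriction to the subgroup in question is a coboundary (equivalently, $\mathbb{C}^{f|_H}H$ is isomorphic to the ordinary group algebra $\mathbb{C}H$). One should also note that a group of central type necessarily has square order $|G|=n^2$, so writing $\sqrt{|G|}=n$ is legitimate; this is already recorded in the introduction. If one prefers a self-contained argument for the cyclic case avoiding the external citation, one can instead remark directly that $u_g$ generates a commutative subalgebra $\mathbb{C}^{f|_H}H\cong\mathbb{C}[t]/(t^m-c)$ of $\mathbb{C}^fG$, and since a nondegenerate cocycle forces $\mathbb{C}^fG$ to be a single matrix algebra $M_n(\mathbb{C})$, any commutative subalgebra has dimension at most $n$; as $\dim_{\mathbb{C}}\mathbb{C}^{f|_H}H=m$, we again get $m\leq n$.
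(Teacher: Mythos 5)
Your main argument is correct and is precisely the paper's proof: since $\mathbb{C}^*$ is divisible, $H^2(\langle g\rangle,\mathbb{C}^*)$ vanishes, so the restriction of the nondegenerate cocycle $f$ to $H=\langle g\rangle$ is trivial and \cite[Proposition 1.2]{MR3210715} gives $\circ(g)=|H|\leq\sqrt{|G|}=n$. Only your optional self-contained alternative contains a slip: a commutative subalgebra of $M_n(\mathbb{C})$ can have dimension as large as $\lfloor n^2/4\rfloor+1$ by Schur's theorem, so to bound $\dim\mathbb{C}^{f|_H}H$ by $n$ you must use that this subalgebra is commutative \emph{semisimple} (which it is, being spanned by the powers of the finite-order, hence diagonalizable, element $u_g$).
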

Let $G$ be a group. Denote by $\circ (g)$ the order of $g\in G$.
Since $\C$ is algebraically closed we may always assume that for any  $f\in Z^2(G,\C ^*)$,
\begin{equation}\label{eq:orderof}
 u_g^{\circ(g)}=1
\end{equation}
in the corresponding twisted group algebra $\C ^f G$.
Therefore, since for any  $f\in Z^2(G,\C ^*)$
$$[u_g,1]=1,$$
in the corresponding twisted group algebra $\C ^f G$,
the following lemma is clear.
\begin{lemma}\label{lemma:commupto}
Let $G$ be a finite group, let $f\in Z^2(G,\C ^*)$ and let $a,b\in G$ be commuting elements
such that in the corresponding twisted group algebra $\C ^f G$,
$$[u_a,u_b]=\lambda .$$
Then $\lambda $ is a root of unity of order dividing the greatest common divisor of $\circ(a)$ and $\circ(b)$.
\end{lemma}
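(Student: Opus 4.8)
The plan is to read the hypothesis $[u_a,u_b]=\lambda$ as the relation $u_au_b=\lambda\,u_bu_a$ in $\C^fG$ (equivalently $u_au_bu_a^{-1}u_b^{-1}=\lambda$). This reading is forced by the setup: since $a$ and $b$ commute we have $u_{ab}=u_{ba}$, so by~\eqref{eq:multtga} the products $u_au_b=f(a,b)u_{ab}$ and $u_bu_a=f(b,a)u_{ab}$ differ exactly by the scalar $\lambda=\alpha_f(a,b)=f(a,b)f(b,a)^{-1}$ of~\eqref{eq:form}. Because $\lambda$ is a nonzero scalar it is central in $\C^fG$, so the entire task reduces to bounding its multiplicative order.

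First I would record, by a one-line induction on $k$, the commutation formula
$$u_a^{\,k}u_b=\lambda^{k}\,u_bu_a^{\,k}\qquad(k\ge 0),$$
whose inductive step is $u_a^{\,k+1}u_b=u_a(u_a^{\,k}u_b)=\lambda^{k}u_au_bu_a^{\,k}=\lambda^{k+1}u_bu_a^{\,k+1}$, using that the scalar $\lambda$ commutes with every element of the algebra.

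Next I would specialize to $k=\circ(a)$. By the normalization~\eqref{eq:orderof}, available because $\C$ is algebraically closed, we may assume $u_a^{\,\circ(a)}=1$ in $\C^fG$; the displayed relation then collapses to $u_b=\lambda^{\circ(a)}u_b$, and cancelling the invertible element $u_b$ gives $\lambda^{\circ(a)}=1$. A symmetric induction in the second variable yields $u_au_b^{\,m}=\lambda^{m}u_b^{\,m}u_a$ for all $m$, and taking $m=\circ(b)$ together with $u_b^{\,\circ(b)}=1$ produces $\lambda^{\circ(b)}=1$.

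Hence $\lambda$ is a root of unity whose order divides both $\circ(a)$ and $\circ(b)$, and therefore divides $\gcd(\circ(a),\circ(b))$, which is the assertion. I do not expect any genuine obstacle: the argument is a clean commute-and-cancel computation, and the only step requiring care is the appeal to~\eqref{eq:orderof} to legitimately set $u_a^{\,\circ(a)}=1$ and $u_b^{\,\circ(b)}=1$, which is precisely the normalization furnished earlier.
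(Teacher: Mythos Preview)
Your proof is correct and is exactly the argument the paper has in mind: the paper merely states that, with the normalization~\eqref{eq:orderof} and the identity $[u_g,1]=1$, the lemma is ``clear,'' and your induction $u_a^{k}u_b=\lambda^{k}u_bu_a^{k}$ (and its symmetric counterpart) is precisely the one-line computation that makes this clarity explicit.
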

The concept of $f$-regularity is generalized bellow.
\begin{definition}\label{def:nir}[N. Ben David]
Let $f\in Z^2(G,\mathbb{C}^*)$ and let $\lambda\in \hat{G}$. An
element $x\in G$ is called \textbf{$(\lambda,f)$-regular} if for any
$g\in C_G(x)$, $$\lambda (g)=\alpha _f(g,x).$$
\end{definition}
As before, $(\lambda, f)$-regularity is a class property (conjugacy
and cohomology). In particular, an element is $f$-regular if it is
$(1,f)$-regular.
Let $f\in Z^2(G,\mathbb{C}^*)$, $\lambda\in \hat{G}$ and let $x\in
G$ be a $(\lambda, f)$-regular element. Assume $T=\{1,t_2,\ldots
,t_n\}$ is a left transversal of $C_G(x)$. Denote
\begin{equation}\label{eq:span}
S_{(\lambda ,x)}=\sum_{i=1}^n \lambda ^{-1}(t_i)
u_{t_i}u_xu_{t_i}^{-1}.
\end{equation}
Proposition~\ref{th:weightspaces} which gives a complete
description of the weight spaces $(\mathbb{C}^fG) _{\lambda}$, and Lemma~\ref{remark:sc} claiming that
any central element in $G$ induce a semi-invariant element in $\mathbb{C}^fG$ were both proven in \cite{ginosar2012semi}.
\begin{proposition}\label{th:weightspaces}\cite[Proposition 6.2]{ginosar2012semi}
With the above notation,
\begin{equation}
(\mathbb{C}^fG) _{\lambda}=\text{span} _{\mathbb{C}}\{S_{(\lambda
,x)}|\hspace{1mm} x \hspace{1mm} \text{is}\hspace{1mm}
(\lambda,f)\text{-regular} \}.
\end{equation}
\end{proposition}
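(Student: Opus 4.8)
The plan is to prove both inclusions by a direct coefficient computation in the basis $\{u_g\}_{g\in G}$, after recording the elementary identity that governs conjugation. For $h,g\in G$ I write $u_hu_gu_h^{-1}=\mu_f(h,g)\,u_{hgh^{-1}}$ with $\mu_f(h,g)\in\C^*$; by~\eqref{eq:multtga} and~\eqref{eq:form}, when $h\in C_G(g)$ this specializes to $u_hu_gu_h^{-1}=\alpha_f(h,g)\,u_g$, and in particular, if $x$ is $(\lambda,f)$-regular (Definition~\ref{def:nir}) then $u_cu_xu_c^{-1}=\lambda(c)\,u_x$ for every $c\in C_G(x)$. These two observations are what make the sum $S_{(\lambda,x)}$ behave well.

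First I would check that $S_{(\lambda,x)}$ does not depend on the choice of transversal $T$: replacing a representative $t_i$ by $t_ic$ with $c\in C_G(x)$ multiplies $u_{t_ic}u_xu_{t_ic}^{-1}$ by $\lambda(c)$ (all cocycle scalars cancel and the identity above is used), while $\lambda^{-1}(t_ic)=\lambda^{-1}(t_i)\lambda^{-1}(c)$ compensates, so the $i$-th summand is unchanged. Next I would verify the inclusion $\supseteq$, i.e.\ that $S_{(\lambda,x)}\in(\C^fG)_\lambda$. Fix $g\in G$ and write $gt_i=t_{\sigma(i)}c_i$ for a permutation $\sigma$ of $\{1,\dots,n\}$ and $c_i\in C_G(x)$. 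Since $u_gu_{t_i}u_xu_{t_i}^{-1}u_g^{-1}=u_{gt_i}u_xu_{gt_i}^{-1}=u_{t_{\sigma(i)}}u_{c_i}u_xu_{c_i}^{-1}u_{t_{\sigma(i)}}^{-1}=\lambda(c_i)\,u_{t_{\sigma(i)}}u_xu_{t_{\sigma(i)}}^{-1}$, and since $\lambda$ is a homomorphism so that $\lambda^{-1}(t_i)\lambda(c_i)=\lambda(g)\,\lambda^{-1}(t_{\sigma(i)})$, summing over $i$ and reindexing yields $g(S_{(\lambda,x)})=\lambda(g)\,S_{(\lambda,x)}$.

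For the inclusion $\subseteq$, take $a=\sum_{g\in G}a_g u_g\in(\C^fG)_\lambda$ and compare the coefficients of $u_g$ on both sides of $h(a)=\lambda(h)a$; this gives, for all $h,g\in G$, the relation $a_{h^{-1}gh}\,\mu_f(h,h^{-1}gh)=\lambda(h)\,a_g$. Taking $g$ with $a_g\neq0$ and letting $h$ range over $C_G(g)$ forces $\alpha_f(h,g)=\lambda(h)$ for all such $h$, i.e.\ $g$ is $(\lambda,f)$-regular; hence $a$ is supported on $(\lambda,f)$-regular elements, and the support is a union of whole conjugacy classes. Fixing a $(\lambda,f)$-regular representative $x$ with transversal $T$, the same relation (with $g=t_ixt_i^{-1}$, $h=t_i$) gives $a_{t_ixt_i^{-1}}=\lambda^{-1}(t_i)\,\mu_f(t_i,x)\,a_x$, while $u_{t_ixt_i^{-1}}=\mu_f(t_i,x)^{-1}u_{t_i}u_xu_{t_i}^{-1}$; multiplying, the $\mu_f$-factors cancel and the part of $a$ supported on the class of $x$ equals $a_x\,S_{(\lambda,x)}$. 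Summing over a set of representatives of the $(\lambda,f)$-regular classes writes $a$ as a $\C$-linear combination of the $S_{(\lambda,x)}$, completing the argument.

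The computations are all routine bookkeeping of cocycle scalars; the one step that genuinely needs care is the identification of the support of a weight-$\lambda$ element with the union of $(\lambda,f)$-regular classes, because it is precisely there that the $(\lambda,f)$-regularity condition is \emph{forced} rather than assumed, and the relation $a_{h^{-1}gh}\,\mu_f(h,h^{-1}gh)=\lambda(h)\,a_g$ must be extracted with the inversions placed consistently (a single sign slip would replace $\lambda$ by $\lambda^{-1}$). As a byproduct one sees that the spanning set is in fact a basis, since distinct $(\lambda,f)$-regular classes contribute elements with pairwise disjoint supports, though this refinement is not needed for the statement.
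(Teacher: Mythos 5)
Your argument is correct. Note that the paper does not actually prove this proposition; it is quoted from \cite[Proposition 6.2]{ginosar2012semi}, so there is no in-text proof to compare against. Your direct coefficient computation is a valid, self-contained verification: the identity $u_hu_gu_h^{-1}=\alpha_f(h,g)u_g$ for $h\in C_G(g)$ is the right specialization of \eqref{eq:multtga} and \eqref{eq:form}; the independence of $S_{(\lambda,x)}$ from the transversal and the covariance $g(S_{(\lambda,x)})=\lambda(g)S_{(\lambda,x)}$ both check out (the key cancellation $\lambda^{-1}(t_i)\lambda(c_i)=\lambda(g)\lambda^{-1}(t_{\sigma(i)})$ is exactly right); and in the reverse inclusion the relation $a_{h^{-1}gh}\,\mu_f(h,h^{-1}gh)=\lambda(h)a_g$ correctly forces $(\lambda,f)$-regularity of every element of the support and identifies the piece of $a$ supported on $[x]$ with $a_xS_{(\lambda,x)}$, using that $t_i\mapsto t_ixt_i^{-1}$ bijects the transversal onto the conjugacy class. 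Your closing observation that the $S_{(\lambda,x)}$ over distinct regular classes form a basis of $(\C^fG)_\lambda$ is also correct and is what underlies Corollary~\ref{th:theorem1}.
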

\begin{lemma}\label{remark:sc}
Let $G$ be a group with center $Z(G)$. For every element $x\in Z(G)$
and $f\in Z^2(G,\mathbb{C}^*)$ the element $u_x$ is semi-invariant
in $\mathbb{C}^fG$. In particular, for any abelian group $G$, and any 2-cocycle $f\in
Z^2(G,\mathbb{C}^*)$, $$\text{Sz}(\mathbb{C}^fG)=\mathbb{C}^fG.$$
\end{lemma}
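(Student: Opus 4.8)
The plan is to compute the conjugation action of each $u_g$ on $u_x$ directly and show it scales $u_x$ by a character. Fix $x\in Z(G)$. For any $g\in G$ we have $gx=xg$ in $G$, so by~\eqref{eq:multtga}
$$u_gu_x = f(g,x)u_{gx}, \qquad u_xu_g = f(x,g)u_{xg}=f(x,g)u_{gx},$$
and hence $g(u_x)=u_gu_xu_g^{-1} = f(g,x)f(x,g)^{-1}u_x = \alpha_f(g,x)\,u_x$ by~\eqref{eq:form}. Thus it remains to verify that the function $\lambda\colon G\to\C^*$, $\lambda(g):=\alpha_f(g,x)$, lies in $\hat G$, i.e. is multiplicative.

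For this I would use that, for any $g_1,g_2\in G$, the elements $u_{g_1g_2}$ and $u_{g_1}u_{g_2}$ differ only by the nonzero scalar $f(g_1,g_2)$, so conjugation by $u_{g_1g_2}$ coincides with conjugation by $u_{g_1}u_{g_2}$ as an inner automorphism of $\C^fG$. Applying this to $u_x$ and using the formula just obtained,
$$\lambda(g_1g_2)\,u_x = u_{g_1g_2}u_xu_{g_1g_2}^{-1} = u_{g_1}\bigl(u_{g_2}u_xu_{g_2}^{-1}\bigr)u_{g_1}^{-1} = \lambda(g_2)\,u_{g_1}u_xu_{g_1}^{-1} = \lambda(g_1)\lambda(g_2)\,u_x,$$
so $\lambda(g_1g_2)=\lambda(g_1)\lambda(g_2)$ and $\lambda\in\hat G$. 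Hence $u_x$ is semi-invariant of weight $\lambda$. Alternatively, the multiplicativity of $g\mapsto\alpha_f(g,x)$ for central $x$ can be extracted directly from the $2$-cocycle identity $f(g_1,g_2)f(g_1g_2,x)=f(g_2,x)f(g_1,g_2x)$ together with its counterpart obtained by commuting $x$ past $g_2$, but the automorphism argument is cleaner.

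Finally, if $G$ is abelian then $Z(G)=G$, so every basis element $u_x$, $x\in G$, is semi-invariant by the above; since $\{u_x\}_{x\in G}$ spans $\C^fG$ and $\mathrm{Sz}(\C^fG)$ is a subalgebra of $\C^fG$, we conclude $\mathrm{Sz}(\C^fG)=\C^fG$. The only point requiring any care is the multiplicativity of $\lambda$; everything else is immediate from the definitions, so I do not anticipate a genuine obstacle.
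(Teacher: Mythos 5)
Your proof is correct and complete: the computation $u_gu_xu_g^{-1}=\alpha_f(g,x)u_x$ for central $x$ is right, and the observation that conjugation by $u_{g_1g_2}$ agrees with conjugation by $u_{g_1}u_{g_2}$ (they differ by the scalar $f(g_1,g_2)$) cleanly settles the only nontrivial point, namely that $g\mapsto\alpha_f(g,x)$ lies in $\hat G$. The paper itself gives no proof of this lemma --- it is quoted from the reference [Ginosar--Schnabel, \emph{Semi-invariant Matrices over Finite Groups}] --- and your argument is the standard direct verification one would expect there, so there is nothing substantive to contrast.
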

We will also use in the sequel the following estimation of the dimension of the semi-center of twisted group algebras.
\begin{corollary}\label{th:theorem1}(see \cite[Corollary 6.5]{ginosar2012semi})
For any $f\in Z^2(G,\mathbb{C}^*)$, let 
$$\Gamma(f)=\bigcup_{\lambda\in\hat{G}} \Gamma (\lambda,f)$$
be a set of representatives of the
$(\lambda,f)$-regular conjugacy classes for all $\lambda \in\hat{G}$. Then
$$\dim_{\C}{\rm Sz}(\C^fG)=\sum _{x\in \Gamma(f)}[G:G' C_G(x)].$$
\end{corollary}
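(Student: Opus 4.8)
The plan is to combine the $\hat{G}$-grading of the semi-center with the explicit description of its weight spaces in Proposition~\ref{th:weightspaces}, thereby reducing the computation of $\dim_{\C}{\rm Sz}(\C^fG)$ to counting, for each conjugacy class, how many characters $\lambda\in\hat{G}$ make that class $(\lambda,f)$-regular. Since the semi-center is $\hat{G}$-graded, I would start from
$$\dim_{\C}{\rm Sz}(\C^fG)=\sum_{\lambda\in\hat{G}}\dim_{\C}(\C^fG)_{\lambda}.$$

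Next I would compute each summand. In the basis $\{u_g\}$, the element $S_{(\lambda,x)}$ of \eqref{eq:span} is a $\C$-linear combination of the basis vectors $u_g$ with $g$ in the conjugacy class $C$ of $x$, since each summand $u_{t_i}u_xu_{t_i}^{-1}$ is a nonzero scalar multiple of $u_{t_ixt_i^{-1}}$ and these are distinct; hence $S_{(\lambda,x)}\neq 0$ and its support lies in $V_C:=\mathrm{span}_{\C}\{u_g:g\in C\}$. Fixing $\lambda$ and decomposing $(\C^fG)_{\lambda}$ by conjugacy classes, I observe that the conjugation action \eqref{eq:conma} permutes the lines $\C u_g$ ($g\in C$) transitively, so the coordinate map $v\mapsto v_{g_0}$ (for a fixed $g_0\in C$) is injective on weight vectors supported on $C$; thus $\dim_{\C}\bigl((\C^fG)_{\lambda}\cap V_C\bigr)\le 1$, with equality exactly when $C$ is $(\lambda,f)$-regular, the witness $S_{(\lambda,x)}$ being supplied by Proposition~\ref{th:weightspaces}. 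Consequently $\dim_{\C}(\C^fG)_{\lambda}$ equals the number of $(\lambda,f)$-regular conjugacy classes.

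Now I would interchange the order of summation:
$$\dim_{\C}{\rm Sz}(\C^fG)=\sum_{\lambda\in\hat{G}}\#\{(\lambda,f)\text{-regular classes}\}=\sum_{[x]}\#\{\lambda\in\hat{G}:[x]\text{ is }(\lambda,f)\text{-regular}\},$$
the last sum running over all conjugacy classes. To evaluate the inner count for a fixed class $[x]$, define $\mu_x\colon C_G(x)\to\C^*$ by $\mu_x(g)=\alpha_f(g,x)$. Since \eqref{eq:conma} is a genuine $G$-action and $g(u_x)=\alpha_f(g,x)u_x$ for $g\in C_G(x)$, the map $\mu_x$ is multiplicative, i.e.\ $\mu_x\in\widehat{C_G(x)}$. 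By Definition~\ref{def:nir}, $x$ is $(\lambda,f)$-regular precisely when $\lambda|_{C_G(x)}=\mu_x$, so the relevant set of $\lambda$ is either empty or a coset of $\{\lambda\in\hat{G}:\lambda|_{C_G(x)}=1\}$; it is nonempty exactly when $[x]$ lies in $\Gamma(f)$. Finally, since every $\lambda\in\hat{G}$ factors through the abelian group $G/G'$, the subgroup $\{\lambda:\lambda|_{C_G(x)}=1\}$ consists of the characters of $G/G'$ trivial on the image of $C_G(x)$, a group of order $[G:G'C_G(x)]$ by duality. Substituting this count ($0$ off $\Gamma(f)$, and $[G:G'C_G(x)]$ on it) yields $\dim_{\C}{\rm Sz}(\C^fG)=\sum_{x\in\Gamma(f)}[G:G'C_G(x)]$.

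The main obstacle I anticipate is the bookkeeping in the final step: verifying cleanly that $\mu_x$ is a homomorphism, so that $(\lambda,f)$-regularity becomes the linear condition $\lambda|_{C_G(x)}=\mu_x$, and reconciling the two indexings, namely that the union $\Gamma(f)=\bigcup_{\lambda}\Gamma(\lambda,f)$ lists each class once while the number of witnessing characters $\lambda$ is exactly the weight $[G:G'C_G(x)]$, so that passing between the sum over $\lambda$ and the sum over classes introduces neither over- nor undercounting. Everything else, the grading, the non-vanishing of $S_{(\lambda,x)}$, and the disjoint-support argument bounding the weight spaces, is routine once Proposition~\ref{th:weightspaces} is in hand.
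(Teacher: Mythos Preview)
Your argument is correct. The paper does not give its own proof of this corollary; it is stated with a citation to \cite[Corollary 6.5]{ginosar2012semi} and used as a black box thereafter. Your reconstruction---grading the semi-center by $\hat{G}$, using Proposition~\ref{th:weightspaces} together with the disjoint-support/transitivity argument to identify $\dim_{\C}(\C^fG)_{\lambda}$ with the number of $(\lambda,f)$-regular classes, and then Fubini plus the coset count $|\{\lambda:\lambda|_{C_G(x)}=\mu_x\}|=[G:G'C_G(x)]$---is exactly the intended one and matches the argument in the cited source. The point you flag as a potential obstacle, that $\mu_x\colon g\mapsto\alpha_f(g,x)$ is multiplicative on $C_G(x)$, is handled correctly by your observation that conjugation is a genuine $G$-action on $\C^fG$, and the bookkeeping on $\Gamma(f)$ is fine since each class appears once in the union while its multiplicity among the $\lambda$'s is recovered by the index.
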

 \section{Classification of groups of central type of order $p^4$}\label{main}
In this section we prove Theorem 1 and Theorem 2.
Throughout this section we will use the classification (and numbering) of groups of order $p^4$, which can be found in Table~\ref{tab:p4}
for the odd case and Table~\ref{tab:16} for the even case.\\
At first we will handle the odd and the even case simultaneously (groups $G_{(\text{i})}$-$G_{(\text{vii})}$). However, at some point (from group $G_{(\text{viii})}$ and forward)
we will separate between the cases.

We start from the well known abelian case.
Abelian groups of central type are exactly groups of the form $A\times A$,
that is the direct sum of two copies of the same abelian group (see \cite[Theorem 5]{BSZ}).
In particular, the abelian groups of central type of order $p^4$ are
\begin{equation}
 C_p\times C_p \times C_p \times C_p \text{ and } C_{p^2}\times C_{p^2}.
\end{equation}
This shows that from the groups $G_{(\text{i})}$-$G_{(\text{v})}$ in both Table~\ref{tab:p4}
and Table~\ref{tab:16}, the groups of central type are $G_{(\text{iii})}$ and $G_{(\text{v})}$.

We now start dealing with the non-abelian case.
The group $G_{(\text{vi})}$ (in both tables) admits a cyclic subgroup of order $p^3$ and hence by Lemma~\ref{lemma':toobycyc} it is not of central type.

In the group $G_{(\text{vii})}$, the center is $\langle a \rangle \cong C_{p^2}$ and the commutator subgroup of $G_{(\text{vii})}$ is $\langle a^p \rangle$.
Therefore, the group of $1$-dimensional characters of $G_{(\text{vii})}$ is isomorphic to $C_p^3$.
Hence, there is no embedding of the center of $G_{(\text{vii})}$ in the group of $1$-dimensional characters of $G_{(\text{vii})}$.
Consequently, by \cite[Theorem C]{ginosar2012semi} this group is not of central type. 

It turns out that the group  $G_{(\text{viii})}$ is of central type in the odd case and is not of central type in the even case.
From now on we will separate between the odd case and the even case.
\subsection{$p$ is odd}
In this section we complete the proof of Theorem 1.
The numbering of the groups in this section is with respect to Table~\ref{tab:p4}.
We start by showing that the group  $G_{(\text{viii})}$ is of central type.
In order to do so we will use a construction of crossed product which arises from an action of a group $G$ on a ring $R$ by an automorphism of $R$.
\begin{theorem}
Let $G=\langle a,b \rangle$ be a group of order $p^4$ such that 
$$a^{p^2}=b^{p^2}=1,\quad [a,b]=a^p.$$
Then $G$ is a group of central type. 
\end{theorem}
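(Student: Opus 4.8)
The plan is to produce an explicit nondegenerate cocycle $f\in Z^2(G,\C^*)$ by realising $\C^fG$ as a skew group algebra of an abelian (hence non‑simple) algebra by a cyclic group, and then showing that this skew group algebra collapses to a single matrix algebra. First I would record the structure of $G$: since $(1+p)$ has multiplicative order $p$ modulo $p^2$, the relation $[a,b]=a^p$ gives $b^{-1}ab=a^{1+p}$, hence $bab^{-1}=a^{1-p}$ (using $p$ odd) and $b^p$ is central; one checks $\langle a\rangle\cap\langle b\rangle=1$, so $G=\langle a\rangle\rtimes\langle b\rangle$ and $|G|=p^4$. Put $N:=C_G(a)=\langle a\rangle\times\langle b^p\rangle\cong C_{p^2}\times C_p$, an abelian normal subgroup of index $p$ with $G/N=\langle\bar b\rangle\cong C_p$.

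Fix a primitive $p^2$‑th root of unity $\mu$ and set $\omega:=\mu^{-p}$, a primitive $p$‑th root of unity. Let $R_0:=\C\langle a\rangle$ and let $\theta\in\operatorname{Aut}(R_0)$ be $\theta(u_a)=\mu\,u_a^{1-p}$. Using $\sum_{m=0}^{p^2-1}(1-p)^m\equiv0\pmod{p^2}$ (valid since $p$ is odd) one gets $\theta^{p^2}=\mathrm{id}$, so $\langle b\rangle\cong C_{p^2}$ acts on $R_0$ via $b\mapsto\theta$; form $A:=R_0\rtimes_\theta\langle b\rangle$, with $u_bu_au_b^{-1}=\theta(u_a)$ and $u_a^{p^2}=u_b^{p^2}=1$. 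Its natural grading $A_{a^ib^j}=\C\,u_a^iu_b^j$ has one‑dimensional components and the multiplication rule of $G$, so $A=\C^fG$ for a well‑defined $f\in Z^2(G,\C^*)$; this folds the $2$‑cocycle identity into the associativity of $A$.

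Next I would analyse this algebra. Restricting to $N$ gives $R:=\C^{f|_N}N=R_0\rtimes_{\theta^p}\langle b^p\rangle$ with $\theta^p(u_a)=\omega^{-1}u_a$ (using $\sum_{m=0}^{p-1}(1-p)^m\equiv p\pmod{p^2}$ and $\mu^p=\omega^{-1}$). Since $\omega$ has order $p$, $\langle b^p\rangle$ acts freely on $\widehat{\langle a\rangle}$ with $p$ orbits, so $R\cong M_p(\C)^{\oplus p}$, with primitive central idempotents $e_\zeta$ ($\zeta^p=1$) indexed by the eigenvalue $\zeta$ of $u_a^p\in Z(R)=\C[u_a^p]$. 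Conjugation by the unit $u_b$ satisfies $u_bu_a^pu_b^{-1}=\theta(u_a)^p=\mu^pu_a^{p(1-p)}=\omega^{-1}u_a^p$, hence sends $e_\zeta\mapsto e_{\omega\zeta}$ and cyclically permutes $e_1,e_\omega,\dots,e_{\omega^{p-1}}$ transitively. Writing $\C^fG=\bigoplus_{k=0}^{p-1}Ru_b^k$, the $e_\zeta$ are then $p$ mutually orthogonal idempotents of $\C^fG$ with sum $1$, pairwise equivalent via the units $u_b^k$ (as $u_b^ke_1u_b^{-k}=e_{\omega^k}$), while $e_1(\C^fG)e_1=e_1Re_1\cong M_p(\C)$ because $e_1(Ru_b^k)e_1=(e_1Re_{\omega^k})u_b^k=0$ for $k\not\equiv0\pmod p$ (distinct central idempotents of $R$). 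Thus $\C^fG\cong M_p\bigl(e_1(\C^fG)e_1\bigr)\cong M_{p^2}(\C)$ is simple, $f$ is nondegenerate (Definition~\ref{def:altdef}), and $G$ is of central type.

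The point that needs care is the choice of scalars. By Lemma~\ref{lemma:commupto} the value $\alpha_f(a,b^p)$ is forced to be a $p$‑th, not a $p^2$‑th, root of unity, so $\C^{f|_N}N$ can never be simple; one must instead settle for $R\cong M_p(\C)^{\oplus p}$ and put the ``extra'' twist into $\theta$ itself by taking $\mu$ of order $p^2$. It is exactly the interplay $\mu^p=\omega^{-1}$ together with $(1-p)^p\equiv1\pmod{p^2}$ that simultaneously produces the block decomposition of $R$ and the transitivity of the conjugation action of $u_b$ on those blocks; the one computation I would be careful about is that $\theta$ genuinely has order $p^2$ (equivalently $\theta^p\ne\mathrm{id}$, clear since $\omega\ne1$), so that $A$ is a bona fide skew group algebra and $f$ a bona fide cocycle.
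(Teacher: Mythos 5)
Your proposal is correct, and its first half coincides with the paper's: the paper likewise realises $\C^fG$ as a crossed product of $\C\langle a\rangle$ by $\langle b\rangle\cong C_{p^2}$ acting through an automorphism $u_a\mapsto\zeta_{p^2}u_a^{p+1}$ of order $p^2$ (your $\theta(u_a)=\mu u_a^{1-p}$ is the same construction after replacing $b$ by $b^{-1}$), and the congruences you invoke --- which fail precisely at $p=2$ --- are the same ones the paper checks to see that the automorphism has order exactly $p^2$. Where you genuinely diverge is the nondegeneracy step. The paper stays inside the $f$-regularity formalism of Definition~\ref{def:altdef}: it takes a general $x=a^ib^j$ and exhibits, case by case, an element of $C_G(x)$ whose basis element fails to commute with $u_x$. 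You instead compute the Artin--Wedderburn structure outright: you identify $\C^{f|_N}N\cong M_p(\C)^{\oplus p}$ for $N=C_G(a)=\langle a\rangle\times\langle b^p\rangle$, note that conjugation by $u_b$ permutes the $p$ blocks transitively because $u_bu_a^pu_b^{-1}=\omega^{-1}u_a^p$, and conclude $\C^fG\cong M_p\bigl(e_1(\C^fG)e_1\bigr)\cong M_{p^2}(\C)$ via the standard idempotent argument, using $e_1Re_{\omega^k}=0$ for $k\not\equiv0\pmod p$. Your route is slightly less elementary but buys more: it exhibits the simple algebra explicitly rather than only certifying that no nontrivial element is $f$-regular, and it isolates cleanly where oddness of $p$ enters (namely $\sum_{m=0}^{p-1}(1-p)^m\equiv p\not\equiv0\pmod{p^2}$, i.e.\ $\theta^p\neq\mathrm{id}$), consistent with the fact that the corresponding group of order $16$ is not of central type. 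In a final write-up you should record the inductive formula $\theta^k(u_a)=\mu^{\sum_{m=0}^{k-1}(1-p)^m}u_a^{(1-p)^k}$ underlying both congruence claims, and the standard fact that orthogonal idempotents summing to $1$ and pairwise conjugate by units yield $A\cong M_p(e_1Ae_1)$; neither is a gap.
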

\begin{proof}
We wish to construct a nondegenerate cohomology class $[f]\in H^2(G,\C^*)$.
This twisted group algebra $\mathbb{C}^fG$ is generated as algebra by $u_a,u_b$ such that
\begin{equation}\label{eq:basicrelations2}
u_a^{p^2}=u_b^{p^2}=1.
\end{equation}
We use a construction of crossed product in the following way.
Consider the following $C_{p^2}=\langle b \rangle$-action on the commutative group algebra $R_1=\mathbb{C}\langle a \rangle$.
\begin{align*}
\psi _b: & R_1  \rightarrow  R_1\\
& u_a\mapsto \zeta _{p^2} u_a^{p+1}
\end{align*}
where $\zeta _{p^2}$ is a root of unit of order $p^2$.
By setting $\psi_b(u_a^i)=\psi _b(u_a)^i$ we ensure that $\psi _b$ is a homomorphism of $R_1$.

Next, we check that the order of the automorphism $\psi _b$ is $p^2$.
By induction it is easy to show that 
\begin{equation}\label{eq:ind8}
\psi_b ^k(u_a)=\zeta _{p^2}^{\left( k+p \sum _{i=1}^{k-1} i\right)} u_a^{kp+1}=\zeta _{p^2}^{\left( k+\frac{k(k-1)p}{2} \right)} u_a^{kp+1}.
\end{equation}
Then, since $u_a$ has order $p^2$, if 
$$\psi_b ^k(u_a)=\zeta _{p^2}^{\left( k+\frac{k(k-1)p}{2} \right)} u_a^{kp+1}=u_a$$
then $k\equiv 0($mod $p)$.

Assume now that $k=ip$.
If $i\not \equiv 0($mod $p)$ then 
$$ k+\frac{k(k-1)p}{2}=ip+\frac{ip(ip-1)p}{2} \not \equiv 0(\text{mod }p^2).$$
Therefore, since $\zeta _{p^2}$ has order $p^2$, if $\psi_b ^k(u_a)=u_a$ then $k\equiv 0($mod $p^2)$.
Consequently, the order of $\psi_b$ is at least $p^2$.
By putting $k=p^2$ in~\eqref{eq:ind8} we get
$$\psi_b ^{p^2}(u_a)=\zeta _{p^2}^{\left( p^2+\frac{p^2(p^2-1)p}{2} \right)} u_a^{p^3+1}=\zeta _{p^2}^{p^2\left( 1+\frac{(p^2-1)p}{2} \right)}u_a=u_a.$$
We conclude that $\psi_b$ is an automorphism of $R_1$ of order $p^2$.

Since $\psi _b$ is an automorphism of the ring $R_1$, the ring $R$ generated by $u_a,u_b$ is a crossed product of 
$C_{p^2}=\langle u_b \rangle$ over $R_1$.
In particular it is an associative algebra. Hence the ring $R$ is an associative complex algebra generated by $u_a,u_b$ with the following relations:
$$ u_a^{p^2}=u_b^{p^2}=1,\quad  u_au_bu_a^{-1}=\zeta _{p^2}u_bu_a^p.$$
The associativity of $R$ ensures that there exists a cocycle $f\in Z^2(G,\mathbb{C}^*)$ which satisfies the above relations.
We show that $f$ is nondegenerate by showing that there are no non-trivial $f$-regular elements in $G$.
Since $\psi_b $ has order $p^2$,
$$\psi_b ^p(u_a)=u_b^{-p}u_au_b^p\neq u_a\Rightarrow [u_a,u_b^p]\neq 1.$$
Since $b^p\in C_G(a)$ we conclude that $a$ and $b^p$ are not $f$-regular.
On the other hand, by using the equality $u_a u_b u_a^{-1}=\zeta _{p^2}u_bu_a^p$, it is easy to prove by induction that
$$u_a^ku_bu_a^{-k}=\zeta _{p^2}^k u_bu_a^{kp}.$$
In particular, for $k=p$ we get
$$u_a^pu_bu_a^{-p}=\zeta _{p^2}^p u_bu_a^{p^2}=\zeta _{p^2}^p u_b\neq u_b\Rightarrow [u_a^p,u_b]\neq 1.$$
Since $a^p\in C_G(b)$ we conclude that $b$ and $a^p$ are not $f$-regular.

Now, let $x=a^ib^j \in G$ be an $f$-regular element.
If $j\not \equiv 0($mod $p)$ then $[u_x,u_a^p]\neq 1$ while $a^p\in C_G(x)$. Hence we may assume that 
$j\equiv 0($mod $p)$.\\
If $j\equiv 0($mod $p)$ and $j \not \equiv 0($mod $p^2)$, then $[u_x,u_a]\neq 1$ while $a\in C_G(x)$. Hence we may assume that $j=0$ and $x=a^i$.\\
If $i\not \equiv 0($mod $p)$ then $[u_x,u_b^p]\neq 1$ while $b^p\in C_G(x)$. Hence we may assume that  $i\equiv 0($mod $p)$.\\
But then if $i\equiv 0($mod $p)$ and $i \not \equiv 0($mod $p^2)$, then $[u_x,u_b]\neq 1$ while $b\in C_G(x)$ and therefore $x$ is trivial.
Consequently, $f$ is nondegenerate and $G$ is a group of central type.
\end{proof}
It turns out that the group  $G_{(\text{ix})}$ is of central type in the even case and is not of central type in the odd case.
The main reason for that is related to the following number-theoretical lemma.
\begin{lemma}\label{lemma:distinguishp2}
 Let $p$ be a prime number.
 Then, $p^3$ is a divisor of $(p+1)^p-1$ if and only if $p=2$.
\end{lemma}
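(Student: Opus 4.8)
The plan is to prove the two implications separately, both by elementary means. The ``if'' direction is an immediate check: for $p=2$ we have $(p+1)^p-1 = 3^2-1 = 8 = 2^3 = p^3$, so $p^3 \mid (p+1)^p-1$.

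For the ``only if'' direction I would assume $p$ is an odd prime and show the stronger statement that $(p+1)^p - 1 \equiv p^2 \pmod{p^3}$; since $p^2$ is visibly not divisible by $p^3$, this gives $p^3 \nmid (p+1)^p-1$. The key computation is the binomial expansion
$$
(p+1)^p - 1 = \sum_{k=1}^{p} \binom{p}{k} p^k = \binom{p}{1}p + \binom{p}{2}p^2 + \sum_{k=3}^{p}\binom{p}{k}p^k .
$$
Here the leading term is $\binom{p}{1}p = p^2$, and every term with $k \geq 3$ is divisible by $p^k$, hence by $p^3$. The one term requiring the hypothesis on $p$ is the middle one: $\binom{p}{2}p^2 = \frac{p(p-1)}{2}\,p^2 = \frac{p-1}{2}\,p^3$, and because $p$ is odd the quantity $\frac{p-1}{2}$ is an integer, so this term too is divisible by $p^3$. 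Reducing the sum modulo $p^3$ therefore leaves exactly $p^2$, completing the argument.

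I expect no real obstacle here; the only point to be careful about is confirming that every contribution beyond the leading $p^2$ vanishes modulo $p^3$, and the oddness of $p$ is precisely what rescues the $k=2$ term. It may also be worth remarking that this is an instance of the lifting-the-exponent lemma ($v_p\big((p+1)^p-1\big) = v_p(p) + v_p(p) = 2$ for odd $p$), and that the case $p=2$ is genuinely exceptional because there $\binom{2}{1}\cdot 2 = 4$ and $\binom{2}{2}\cdot 2^2 = 4$ combine to give $8 = 2^3$ rather than a residue of $2^2$ modulo $2^3$.
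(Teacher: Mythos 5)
Your proof is correct and follows essentially the same route as the paper: verify $3^2-1=2^3$ directly, and for odd $p$ expand $(p+1)^p-1$ binomially to show it is congruent to $p^2$ modulo $p^3$, with the oddness of $p$ used exactly to kill the $\binom{p}{2}p^2$ term. The remark about lifting the exponent is a nice aside but not needed.
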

\begin{proof}
One direction is clear.
Namely, $2^3=3^2-1$.\\
Assume now that $p$ is an odd prime. We prove that $(p+1)^p-1\equiv p^2 (\text{ mod }p^3)$.
By the binomial identity 
$$(p+1)^p-1=\sum _{i=1}^p \binom{p}{i}p^i.$$
Consequently,
$$(p+1)^p-1\equiv \binom{p}{2}p^2+p^2 (\text{ mod }p^3).$$
Since $p$ is odd we conclude that $\binom{p}{2}p^2\equiv 0 (\text{ mod }p^3)$ and the result follows.
\end{proof}

The following lemma shows that the group $G_{(\text{ix})}$ is not of central type.
\begin{lemma}\label{lemma:groupnine}
 Let $G=\langle a,b,c \rangle$ be a group of order $p^4$ such that
 $$a^{p^2}=b^p=c^p=1,\quad [a,b]=[b,c]=1,[a,c]=a^p.$$
 Then, for any cocycle $f\in Z^2(G,\mathbb{C}^*)$, the element $a^p$ is $f$-regular.
 In particular, $G$ is not a group of central type.
\end{lemma}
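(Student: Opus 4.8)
The plan is to show that $a^p$, which is a non-trivial central element of $G$, is $f$-regular for every $f\in Z^2(G,\C^*)$; by Definition~\ref{def:altdef} no cocycle on $G$ is then nondegenerate, so $G$ is not of central type. Since $a^p\in Z(G)$ we have $C_G(a^p)=G$, so I must check that $\alpha_f(a^p,g)=1$ for every $g\in G$; equivalently $[u_{a^p},u_g]=1$ in $\C^fG$ for all $g$. It suffices to verify this for the generators $a,b,c$. Commutation with $a$ and with $b$ is immediate: $u_{a^p}$ is a power of $u_a$ (up to a scalar, using~\eqref{eq:orderof} we may take $u_a^{p}=$ a scalar multiple of $u_{a^p}$, but in any case $u_{a^p}$ and $u_a$ commute), and $b\in C_G(a)$ with $a$ generating a cyclic — hence $f$-regular up to the relevant subgroup — situation; more directly, since $[a,b]=1$, Lemma~\ref{lemma:commupto} gives $[u_a,u_b]=\mu$ a root of unity of order dividing $\gcd(p^2,p)=p$, so $[u_{a^p},u_b]=[u_a,u_b]^p=\mu^p=1$. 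Likewise $[u_b,u_a]$ and anything involving only $a,b$ is harmless, and $b$ commutes with $c$ as well.

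The only delicate point is commutation of $u_{a^p}$ with $u_c$. Here $a$ and $c$ do not commute ($[a,c]=a^p$), so I cannot invoke Lemma~\ref{lemma:commupto} directly for the pair $(a,c)$; instead I work inside the subgroup $H=\langle a,c\rangle$, which has order $p^3$ with $a^{p^2}=c^p=1$ and $[a,c]=a^p$. In $\C^fH$ set $u_c u_a u_c^{-1}=\gamma\, u_{a^{p+1}}$ for a suitable nonzero scalar $\gamma$ (coming from $f$ and the relation $c a c^{-1}=a^{p+1}$). Conjugating $u_a$ by $u_c$ repeatedly, an easy induction — exactly parallel to~\eqref{eq:ind8} — yields
\begin{equation*}
u_c^{k} u_a u_c^{-k}=\gamma^{\,k+p\binom{k}{2}}\, u_{a^{kp+1}}
\end{equation*}
up to absorbing the cocycle values on powers of $a$ into the normalization $u_a^{i}u_a^{j}\mapsto u_{a^{i+j}}$ (using~\eqref{eq:orderof}). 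Now I want $k=p$: this gives $u_c^{p} u_a u_c^{-p}=\gamma^{\,p+p^2\binom{p}{2}/p}\cdot(\text{scalar})\,u_{a^{p^2+1}}$, and since $u_c^p$ is a scalar (as $c^p=1$, by~\eqref{eq:orderof}) the conjugation by $u_c^p$ is trivial, forcing $\gamma^{\,p}\cdot(\text{the collected scalar}) = 1$ and $a^{p^2+1}=a$. The upshot is a constraint pinning down $\gamma^{p}$, and then computing $u_c u_{a^p} u_c^{-1}$: raising the $k=1$ relation to the $p$-th power (again collapsing $u_a$-powers via~\eqref{eq:orderof}), $u_c u_{a}^{p} u_c^{-1}=\gamma^{\,p}\, u_a^{p(p+1)}=\gamma^p\,u_{a^{p^2+p}}=\gamma^p\,u_{a^p}$, and the constraint just derived shows $\gamma^p=1$. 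Hence $[u_{a^p},u_c]=1$.

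So the main obstacle — and the only place the hypothesis $p$ odd would ordinarily matter, via Lemma~\ref{lemma:distinguishp2} — is controlling the scalar $\gamma^p$; but for the element $a^p$ (rather than $a$ itself) the binomial coefficient $\binom{p}{2}p$ contributions wash out regardless of parity, which is precisely why this lemma holds for all primes $p$ and the $p=2$ exceptionality of Lemma~\ref{lemma:distinguishp2} is invisible here. Assembling the three commutation facts, $u_{a^p}$ is central in $\C^fG$, so $\alpha_f(a^p,g)=1$ for all $g\in C_G(a^p)=G$, i.e. $a^p$ is $f$-regular. As $a^p\neq 1$, Definition~\ref{def:altdef} shows $f$ is degenerate; since $f$ was arbitrary, $G$ is not of central type. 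I would present this as: first reduce $f$-regularity of $a^p$ to the three generator checks, dispatch $a$ and $b$ via Lemma~\ref{lemma:commupto}, then carry out the induction in $\langle a,c\rangle$ and extract $\gamma^p=1$, concluding with Definition~\ref{def:altdef}.
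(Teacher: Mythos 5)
Your strategy is the paper's own: reduce $f$-regularity of the central element $a^p$ to commutation of $u_{a^p}$ with $u_a,u_b,u_c$, dispatch $a$ and $b$ via Lemma~\ref{lemma:commupto}, and pin down the scalar $\gamma$ in $u_cu_au_c^{-1}=\gamma u_a^{p+1}$ by conjugating by $u_c^p=1$. The first two checks are fine, but the last step has a genuine gap. Iterated conjugation gives exactly $\gamma^{e_p}=1$ with $e_p=\sum_{i=0}^{p-1}(p+1)^i=\frac{(p+1)^p-1}{p}$ (your exponent $p+p\binom{p}{2}$ is only the truncation of $e_p$ modulo $p^2$, which you are not yet entitled to use). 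This single constraint does not force $\gamma^p=1$: for $p=3$ one has $e_3=21=3\cdot 7$, and a primitive $7$-th root of unity satisfies $\gamma^{21}=1$ while $\gamma^3\neq 1$. What is missing is the second, independent constraint that the paper extracts before doing the induction: since $a^p$ and $c$ commute and both have order $p$, Lemma~\ref{lemma:commupto} applied to $[u_{a^p},u_c]=\gamma^p$ gives $\gamma^{p^2}=1$. Only the combination --- the order of $\gamma$ divides $\gcd\bigl(p^2,e_p\bigr)$, which equals $p$ for odd $p$ because $e_p/p$ is then prime to $p$ --- yields $\gamma^p=1$. Your sentence ``the constraint just derived shows $\gamma^p=1$'' is a non sequitur without $\gamma^{p^2}=1$.

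The more serious error is your closing claim that the binomial contributions ``wash out regardless of parity'' and that the lemma holds for all primes. It does not, and this should have been a warning sign: for $p=2$ this group is $G_{(\text{ix})}$ of Table~\ref{tab:16}, and Theorem~\ref{th:group9} proves it \emph{is} of central type, via a cocycle with $[u_c,u_a]=iu_a^2$; there $\gamma=i$, $\gamma^2=-1$, and $a^2$ is not $f$-regular. Concretely, $\gcd\bigl(p^2,e_p\bigr)=p$ requires $p^2\nmid e_p$, which by Lemma~\ref{lemma:distinguishp2} holds exactly for odd $p$ (equivalently, $1+\binom{p}{2}\equiv 1\pmod p$ for odd $p$ but equals $2$ for $p=2$, leaving room for $\gamma=i$). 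Far from being invisible, Lemma~\ref{lemma:distinguishp2} is the heart of the proof and the reason this lemma sits in the ``$p$ odd'' subsection. A correct write-up must first record $\gamma^{p^2}=1$ and then invoke the parity-sensitive valuation of $e_p$.
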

\begin{proof}
Let $f\in Z^2(G,\mathbb{C}^*)$, and assume that 
$$[u_a,u_b]=\delta, [u_b,u_c]=\lambda, [u_a,u_c]=\gamma u_a^p.$$
By Lemma~\ref{lemma:commupto}, $\delta$ and $\lambda $ are both $p$-th roots of unity.
Therefore,
\begin{equation}\label{eq:apisfreg3}
[u_a^p,u_b]=\delta ^p=1.
\end{equation}
Consequently, if $[u_a^p,u_c]=1$, then $a^p$ is $f$-regular and the proof is completed.
Using the equality $u_au_cu_a^{-1}=\gamma u_a^pu_c$ we compute
\begin{align}
& [u_a^k,u_c]=u_a^{k-1}u_au_cu_a^{-1}u_a^{-(k-1)}u_c^{-1}= u_a^{k-1}(u_au_cu_a^{-1})u_a^{-(k-1)}u_c^{-1}= \\ \nonumber
& u_a^{k-1}(\gamma u_a^pu_c)u_a^{-(k-1)}u_c^{-1}= \gamma u_a^p u_a^{k-1} u_cu_a^{-(k-1)}u_c^{-1}=\gamma u_a^p [u_a^{k-1},u_c].
\end{align}
Therefore, by induction 
\begin{equation}\label{eq:commom3}
[u_a^k,u_c]=\gamma^k u_a^{pk}.
\end{equation}
In particular, by putting $k=p$ in~\eqref{eq:commom2} we get
\begin{equation}\label{eq:apisfreg4}
[u_a^p,u_c]=\gamma^pu_a^{p^2}=\gamma^p. 
\end{equation}
Now, in order to prove that $u_{a^p}$ is $f$-regular we need to show that 
$\gamma$ is a $p$-th root of unity.

Since $a^p$ and $c$ commutes in $G$, then by Lemma~\ref{lemma:commupto}, $\gamma$ is a root of unity of order dividing $p^2$.
By raising both sides of 
$$u_cu_au_c^{-1}=\gamma ^{-1} u_a^{1-p}$$ to the $p+1$ power we get that 
$$u_cu_a^{p+1}u_c^{-1}=\gamma^{-(p+1)} u_a,$$ and therefore 
$$u_c^{-1}u_au_c=\gamma^{p+1} u_a^{p+1}.$$
From here, by induction it is easy to prove that
\begin{equation}\label{eq:eqfor9}
u_c^{-k}u_au_c^k=\gamma ^{\sum _{i=1}^{k}(p+1)^i} u_a^{\left({(p+1)}^k\right)}.
\end{equation}
Since $u_c^p=u_a^{p^2}=1$, then by putting $k=p$ in~\eqref{eq:eqfor9} we get
\begin{equation}
u_a=u_c^{-p}u_au_c^p=\gamma ^{\sum _{i=1}^{p}(p+1)^i} u_a^{\left({(p+1)}^p\right)}=\gamma ^{\frac{(p+1)[(p+1)^p-1]}{p}}u_a.
\end{equation}
By Lemma~\ref{lemma:distinguishp2}, $p^2$ is not a divisor of ${\frac{(p+1)^p-1}{p}}$ and therefore, since $p+1$ is prime to $p$, the order of $\gamma$
is not $p^2$. We conclude that $\gamma$ is a $p$-th root of unity and therefore by~\eqref{eq:apisfreg3} and~\eqref{eq:apisfreg4}
$a^p$ is $f$-regular which completes the proof.
\end{proof}
Similarly to the case of group $G_{(\text{ix})}$,
it turns out that the group $G_{(\text{x})}$  is of central type in the even case and is not of central type in the odd case.
\begin{lemma}\label{lemma:groupten}
 Let $G=\langle a,b,c \rangle$ be a group of order $p^4$ such that
 $$a^{p^2}=b^p=c^p=1,\quad [a,b]=[b,c]=1,[a,c]=b.$$
 Then, for any cocycle $f\in Z^2(G,\mathbb{C}^*)$, the element $a^p$ is $f$-regular.
 In particular, $G$ is not a group of central type.
\end{lemma}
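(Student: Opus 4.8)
The plan is to mimic the structure of the proof of Lemma~\ref{lemma:groupnine}, exploiting the same number-theoretic input (Lemma~\ref{lemma:distinguishp2}) but with the commutator relation $[a,c]=b$ in place of $[a,c]=a^p$. Fix $f\in Z^2(G,\mathbb{C}^*)$ and write $[u_a,u_b]=\delta$, $[u_b,u_c]=\lambda$, $[u_a,u_c]=\gamma u_b$ for scalars $\delta,\lambda,\gamma\in\mathbb{C}^*$ (using~\eqref{eq:orderof} to normalize $u_a^{p^2}=u_b^p=u_c^p=1$). By Lemma~\ref{lemma:commupto} applied to the commuting pairs $(a,b)$ and $(b,c)$, both $\delta$ and $\lambda$ are $p$-th roots of unity. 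The goal is to show $a^p$ is $f$-regular, i.e. that $u_{a^p}$ commutes with $u_h$ for all $h\in C_G(a^p)=G$ (note $a^p$ is central in $G$). It suffices to check commutation with the generators $u_a$ (automatic, since $u_a$ commutes with $u_a^p$), $u_b$, and $u_c$.

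First I would handle $[u_a^p,u_b]$: since $b$ commutes with both $a$ and — crucially — $b$ is central-ish here, from $[u_a,u_b]=\delta$ one gets $[u_a^k,u_b]=\delta^k$ by an easy induction (the same computation as~\eqref{eq:apisfreg3}), hence $[u_a^p,u_b]=\delta^p=1$. Next, the key step is computing $[u_a^p,u_c]$. Using $u_au_cu_a^{-1}=\gamma u_b u_c$ and the fact that $u_b$ is central up to the root of unity $\delta$ (and $[u_b,u_c]=\lambda$), one derives a recursion for $[u_a^k,u_c]$ analogous to~\eqref{eq:commom3}; carrying the induction through to $k=p$ will express $[u_a^p,u_c]$ as a product of $\gamma^p$ with a power of $u_b$ and some root of unity built from $\delta,\lambda$. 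Since $u_b^p=1$ and $\delta,\lambda$ are $p$-th roots of unity, that power of $u_b$ and those scalar factors all vanish modulo the relevant exponents, leaving $[u_a^p,u_c]=\gamma^p$ (times possibly a $p$-th root of unity that is in fact $1$). So the whole lemma reduces, exactly as before, to showing that $\gamma$ is a $p$-th root of unity.

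To pin down the order of $\gamma$: on one hand, since $a^p$ and $c$ commute in $G$, Lemma~\ref{lemma:commupto} forces $\gamma^p$ to be a root of unity of order dividing $\gcd(p^2,p)=p$ — wait, that already gives it directly in the cleanest case; more carefully, one should track whether the induction introduces extra factors, and if so, run the "conjugation by $u_c$" argument as in Lemma~\ref{lemma:groupnine}. Namely, rewrite $[u_a,u_c]=\gamma u_b$ as a conjugation formula for $u_c^{-1}u_a u_c$ in terms of $u_a$, $u_b$ (and scalars), iterate it $p$ times using $u_c^p=1$, and read off a constraint of the form $\gamma^{N}=(\text{$p$-th root of unity})$ where $N$ involves $\frac{(p+1)^p-1}{p}$ or a similar expression; Lemma~\ref{lemma:distinguishp2} then shows $p^2\nmid N$ (for odd $p$), forcing $\gamma$ to have order dividing $p$. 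Combining, $[u_a^p,u_b]=[u_a^p,u_c]=1$, so $a^p$ is $f$-regular; since $a^p\neq 1$, $f$ is degenerate, and as $f$ was arbitrary, $G$ is not of central type.

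The main obstacle I anticipate is bookkeeping in the $[u_a^p,u_c]$ computation: unlike Lemma~\ref{lemma:groupnine}, here conjugating $u_a$ by $u_c$ produces a $u_b$ rather than a power of $u_a$ itself, and $u_b$ does not commute with $u_a$ (only up to $\delta$), so the induction~\eqref{eq:eqfor9}-analogue will accumulate products of the form $\delta^{(\text{sum of binomial-type coefficients})}$ alongside the $\gamma$-powers and $u_b$-powers. One must verify that every such accumulated exponent is divisible by $p$ (so the $\delta$-factors disappear) and that the $u_b$-exponent is divisible by $p$ as well, and that the surviving $\gamma$-exponent is precisely the one to which Lemma~\ref{lemma:distinguishp2} applies. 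This is routine but requires care with the $\bmod\,p$ versus $\bmod\,p^2$ distinctions, exactly in the spirit of the preceding proof.
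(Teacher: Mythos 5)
Your skeleton coincides with the paper's: establish $[u_a^p,u_b]=\delta^p=1$ as in~\eqref{eq:apisfreg}, derive by induction $[u_a^k,u_c]=\gamma^k\delta^{k(k-1)/2}u_b^k$ as in~\eqref{eq:commom2} so that $[u_a^p,u_c]=\gamma^p$, and reduce the whole lemma to showing that $\gamma$ is a $p$-th root of unity. Up to that point your bookkeeping concerns (the $\delta$-exponents are of binomial type and divisible by $p$ for odd $p$, the $u_b$-exponent is $p$) are exactly what the paper verifies.

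The problem is the step that pins down the order of $\gamma$: neither of the two routes you sketch works as stated. Applying Lemma~\ref{lemma:commupto} to the commuting pair $(a^p,c)$ only says that the scalar $[u_a^p,u_c]=\gamma^p\cdot(\text{$p$-th root of unity})$ has order dividing $\gcd(p,p)=p$, i.e.\ $\gamma^{p^2}=1$; this does not give $\gamma^p=1$, which is what $f$-regularity of $a^p$ requires. And the analogue of~\eqref{eq:eqfor9} together with Lemma~\ref{lemma:distinguishp2} is the wrong tool here: in $G_{(\text{ix})}$ the quantity $\sum_{i=1}^{p}(p+1)^i=\frac{(p+1)[(p+1)^p-1]}{p}$ appears because conjugation by $c$ sends $a$ to $a^{p+1}$, so the exponent of $\gamma$ compounds geometrically; in the present group conjugation by $c$ sends $u_a$ to a scalar times $u_bu_a$, so after $p$ iterations the $\gamma$-exponent is simply $N=p$ and no $(p+1)^p$ arithmetic ever arises --- Lemma~\ref{lemma:distinguishp2} is not used in the paper's proof of this lemma at all. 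The correct (and simpler) closing move is the paper's~\eqref{eq:commom}: prove $[u_a,u_c^k]=\gamma^k\lambda^{-k(k-1)/2}u_b^k$ by induction and set $k=p$; since $u_c^p=u_b^p=1$, $\lambda^p=1$ and $p\mid\binom{p}{2}$ for odd $p$, this forces $\gamma^p=1$ exactly. Note that $p\mid\binom{p}{2}$ is also precisely where oddness of $p$ enters --- consistently with the fact that the corresponding group of order $16$, $G_{(\text{x})}$ of Table~\ref{tab:16}, \emph{is} of central type.
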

\begin{proof}
Let $f\in Z^2(G,\mathbb{C}^*)$, and assume that $[u_a,u_b]=\delta$ and $[u_b,u_c]=\lambda$.
By Lemma~\ref{lemma:commupto}, $\delta $ and $\lambda $ are both $p$-th roots of unity.
Therefore, 
\begin{equation}\label{eq:apisfreg}
[u_a^p,u_b]=\delta ^p=1.
\end{equation}

Assume now that $[u_a,u_c]=\gamma u_b$.
We show that $\gamma$ is also a $p$-th root of unity. We use the equalities 
$$u_cu_a^{-1}u_c^{-1}=\gamma u_a^{-1}u_b \quad \text{and}\quad  u_bu_c^{-1}=\lambda ^{-1}u_c^{-1}u_b$$
in order to show the following equality:
\begin{align*}
& [u_a,u_c^k]=u_au_c^{k-1}u_cu_a^{-1}u_c^{-1}u_c^{-(k-1)}= u_au_c^{k-1}(u_cu_a^{-1}u_c^{-1})u_c^{-(k-1)}= \\ \nonumber
& u_au_c^{k-1}(\gamma u_a^{-1}u_b)u_c^{-(k-1)}= \gamma u_au_c^{k-1} u_a^{-1}u_c^{-(k-1)}\lambda ^{-(k-1)}u_b=\gamma \lambda ^{-(k-1)}[u_a,u_c^{k-1}]u_b. 
\end{align*}
Therefore, by induction 
\begin{equation}\label{eq:commom}
[u_a,u_c^k]=\gamma^k \lambda ^{-(\sum_{i=1}^k i-1)}u_b^k=\gamma^k \lambda ^{-\frac{k(k-1)}{2}}u_b^k.
\end{equation}
Consequently, since $p$ is odd and since $u_c^p=1$, by putting $k=p$ in~\eqref{eq:commom} we get
$$1=[u_a,1]=[u_a,u_c^p]=\gamma^p \lambda ^{\frac{p(p-1)}{2}}u_b^p=\gamma^p.$$
We conclude that $\gamma$ is also a $p$-th root of unity.

Now, using the equalities 
$$u_au_cu_a^{-1}=\gamma u_bu_c \quad \text{and} \quad u_au_b=\delta u_bu_a$$ we compute
\begin{align*}
& [u_a^k,u_c]=u_a^{k-1}u_au_cu_a^{-1}u_a^{-(k-1)}u_c^{-1}= u_a^{k-1}(u_au_cu_a^{-1})u_a^{-(k-1)}u_c^{-1}= \\ \nonumber
& u_a^{k-1}(\gamma u_bu_c)u_a^{-(k-1)}u_c^{-1}= \gamma \delta ^{k-1}u_b u_a^{k-1} u_cu_a^{-(k-1)}u_c^{-1}=\gamma \delta ^{k-1}u_b[u_a^{k-1},u_c].
\end{align*}
Therefore, by induction 
\begin{equation}\label{eq:commom2}
[u_a^k,u_c]=\gamma^k \delta ^{(\sum_{i=1}^k i-1)}u_b^k=\gamma^k \delta ^{\frac{k(k-1)}{2}}u_b^k.
\end{equation}
Consequently, since $p$ is odd and since $u_b^p=1$, by putting $k=p$ in~\eqref{eq:commom2} we get
\begin{equation}\label{eq:apisfreg2}
[u_a^p,u_c]=\gamma^p \delta ^{\frac{p(p-1)}{2}}u_b^p=1. 
\end{equation}
Hence, by~\eqref{eq:apisfreg} and~\eqref{eq:apisfreg2} we conclude that $a^p$ is $f$-regular which completes the proof.
\end{proof}
The groups $G_{(\text{xi})},G_{(\text{xii})},G_{(\text{xiii})}$ are not of central type.
In these groups the element $a$ has the property that $C_G(a)=\langle a \rangle $ and therefore
by Lemma~\ref{lemma:trivcentralizer} these groups are not of central type.

Next, we show that the group $G_{(\text{xiv})}$ is of central type.
\begin{theorem}
Let $G=\langle a,b,c,d \rangle$ be a group of order $p^4$ such that 
$$a^p=b^p=c^p=d^p=1,\quad [a,b]=[a,c]=[a,d]=[b,c]=[b,d]=1, \quad [c,d]=a.$$
Then $G$ is a group of central type. 
\end{theorem}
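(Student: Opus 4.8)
The plan is to construct an explicit nondegenerate cocycle $f\in Z^2(G,\C^*)$ by the crossed-product method already used for $G_{(\text{viii})}$. First I would record the structure of $G$: since $[a,b]=[a,c]=[a,d]=[b,c]=[b,d]=1$, the elements $a,b$ are central, $G'=\langle[c,d]\rangle=\langle a\rangle$, and $N:=\langle a,b,c\rangle\cong C_p\times C_p\times C_p$ is an abelian \emph{normal} subgroup of index $p$ (normal because $G'\le N$), with $\langle d\rangle$ a complement. From $[c,d]=a$ one gets $d^{-1}cd=ca$, so conjugation by $d$ fixes $a$ and $b$ and sends $c\mapsto ca^{-1}$.

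Fix a primitive $p$-th root of unity $\omega$ and let $R_1=\C^{f_1}N$ be the twisted group algebra of $N$ determined by
$$u_a^p=u_b^p=u_c^p=1,\qquad u_au_c=\omega u_cu_a,\qquad [u_a,u_b]=[u_b,u_c]=1$$
(so $R_1\cong M_p(\C)\otimes\C\langle u_b\rangle$). On $R_1$ define a map $\psi$ by $\psi(u_a)=u_a$, $\psi(u_b)=\omega u_b$, $\psi(u_c)=u_a^{-1}u_c$, extended multiplicatively. The step I expect to be the main obstacle is verifying that $\psi$ is a well-defined \emph{automorphism of $R_1$ of order exactly $p$}. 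It respects $u_au_c=\omega u_cu_a$ by a one-line computation; it has order $p$ since $\psi^k(u_c)=u_a^{-k}u_c$, which equals $u_c$ precisely when $p\mid k$; and — the delicate point — it respects $u_c^p=1$ because $(u_a^{-1}u_c)^p=u_a^{-p}u_c^p=1$, the identity $\omega^{\binom p2}=1$ being used here, which is exactly where the hypothesis that $p$ is odd enters (for $p=2$ one would get $(u_a^{-1}u_c)^2=-1$ and the construction collapses). Note also that the twist $\psi(u_b)=\omega u_b$ cannot be dropped: since $b$ is central in $G$ and $u_b$ is already central in $R_1$, if $\psi$ fixed $u_b$ then $u_b$ would be central in the whole algebra and $b$ would be $f$-regular.

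Next I would form the crossed product $R:=R_1*_\psi\langle d\rangle$, the associative $\C$-algebra generated by $R_1$ together with an element $u_d$ satisfying $u_d^p=1$ and $u_dru_d^{-1}=\psi(r)$ for $r\in R_1$. Then $\dim_\C R=p^3\cdot p=p^4$, and its generators $u_a,u_b,u_c,u_d$ satisfy relations which, read modulo $\C^*$, are precisely the defining relations of $G$ (in particular $u_cu_du_c^{-1}u_d^{-1}=u_a$); hence the $p^4$ products $u_a^iu_b^ju_c^ku_d^l$ ($0\le i,j,k,l<p$) form a basis of $R$, and $R=\C^fG$ for the cocycle $f\in Z^2(G,\C^*)$ whose existence is guaranteed by the associativity of $R$, exactly as in the $G_{(\text{viii})}$ argument.

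Finally I would check that $f$ is nondegenerate, i.e. that $1$ is the only $f$-regular element. Reading off conjugation from the relations, for $x=a^ib^jc^kd^l$ one gets $u_au_xu_a^{-1}=\omega^k u_x$ and $u_bu_xu_b^{-1}=\omega^{-l}u_x$; since $a,b\in Z(G)\subseteq C_G(x)$, $f$-regularity of $x$ forces $k\equiv l\equiv 0\pmod p$, so $x=a^ib^j\in Z(G)$ and therefore $c,d\in C_G(x)$ as well. Then $u_cu_xu_c^{-1}=\omega^{-i}u_x$ and $u_du_xu_d^{-1}=\omega^{j}u_x$ force $i\equiv j\equiv 0\pmod p$, so $x=1$. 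Hence $f$ is nondegenerate and $G$ is of central type. Apart from the well-definedness and order of $\psi$ — where oddness of $p$ is essential — everything is routine bookkeeping with the commutation relations.
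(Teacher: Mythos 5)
Your proposal is correct and follows essentially the same route as the paper: a crossed-product construction of a cocycle realizing the relations $[u_a,u_c]=\zeta_p$, $[u_b,u_d]=\zeta_p^{\pm1}$, $[u_c,u_d]=u_a$, followed by the same step-by-step elimination showing $1$ is the only $f$-regular element. The only (cosmetic) difference is that you adjoin $u_d$ in a single crossed-product step over a twisted group algebra of $\langle a,b,c\rangle$, whereas the paper builds up in two stages starting from $\mathbb{C}Z(G)$ and adjoining $u_c$ and then $u_d$.
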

\begin{proof}
First, notice that 
$$Z(G)=\langle a,b \rangle, C_G(c)= \langle a,b,c \rangle, C_G(d)= \langle a,b,d \rangle.$$
We wish to construct a nondegenerate cohomology class $[f]\in H^2(G,\C^*)$.
This twisted group algebra $\mathbb{C}^fG$ is generated as algebra by $u_a,u_b,u_c,u_d$ such that
\begin{equation}\label{eq:basicrelations}
u_a^p=u_b^p=u_c^p=u_d^p=1.
\end{equation}
We use a construction of crossed product in the following way.
Consider the following $C_p=\langle c \rangle$-action on the commutative group algebra $\mathbb{C}Z(G)$.
\begin{align*}
\psi _c: & \mathbb{C}Z(G)  \rightarrow  \mathbb{C}Z(G)\\
& u_a\mapsto \zeta _p u_a\\
&          u_b\mapsto u_b
\end{align*}
where $\zeta _p$ is a $p$-th root of unity.
Clearly, $\psi _c$ is an automorphism of order $p$ of the ring $\mathbb{C}Z(G)$ and therefore
the ring $R_1$ generated by $u_a,u_b,u_c$ is a crossed product of $C_p=\langle u_c \rangle$ over $\mathbb{C}Z(G)$.
In particular it is an associative algebra.
Consider the following $C_p=\langle d \rangle$-action on $R_1$.
\begin{align*}
\psi _d: & R_1\rightarrow R_1\\
& u_a\mapsto u_a\\
&          u_b\mapsto \zeta _p u_b\\
 &          u_c\mapsto u_a u_c.
\end{align*}
We will show that $\psi _d$ is an automorphism of order $p$ of the ring $R_1$.
We need to check if the following equalities hold:
\begin{enumerate}
\item  $\psi _d(u_b u_a u_b^{-1})= \psi _d(u_b)\psi_d( u_a)\psi_d( u_b)^{-1}.$
\item  $ \psi _d(u_c u_a u_c^{-1})= \psi _d(u_c)\psi_d( u_a)\psi_d( u_c)^{-1}.$
\item   $\psi _d(u_c u_b u_c^{-1})= \psi _d(u_c)\psi_d( u_b)\psi_d( u_c)^{-1}.$
\end{enumerate}
Indeed,
\begin{enumerate}
\item $$\psi _d(u_b u_a u_b^{-1})=\psi _d(u_a)= u_a,$$
\text{ and on the other hand, }
$$\psi _d(u_b)\psi_d( u_a)\psi_d( u_b)^{-1}=(\zeta _p u_b) u_a(\zeta _p u_b)^{-1}=u_a.$$
\item $$\psi _d(u_c u_a u_c^{-1})=\psi _d(\zeta _p^{-1}u_a)=\zeta _p^{-1}u_a,$$ 
\text{ and on the other hand, }
$$\psi _d(u_c)\psi_d( u_a)\psi_d( u_c)^{-1}=u_au_c u_a(u_au_c)^{-1}=u_a(u_c u_a u_c^{-1})u_a^{-1}=\\
u_a\zeta _p^{-1}u_a u_a^{-1}=\zeta _p^{-1}u_a.$$
\item $$\psi _d(u_c u_b u_c^{-1})=\psi_d(u_b)=\zeta _p u_b,$$
\text{ and on the other hand, }
$$\psi _d(u_c)\psi_d( u_b)\psi_d( u_c)^{-1}=(u_au_c) \zeta _p u_b (u_au_c)^{-1}=\zeta _p u_a(u_c u_b u_c^{-1})u_a^{-1}
=\zeta _p u_a u_b u_a^{-1}=\zeta _p u_b.$$
\end{enumerate}

We now need to check that $\psi _d$ has order $p$.
Clearly, 
$$\psi _d^p(u_a)=u_a \text{ and } \psi _d^p(u_b)=u_b.$$
Notice that
$$\psi_d^k(u_c)=\psi_d^{k-1}(u_au_c)=\psi_d^{k-1}(u_a)\psi_d^{k-1}(u_c) =u_a\psi_d^{k-1}(u_c)=\ldots =u_a^ku_c.$$
Therefore,
$$\psi_d^p(u_c)=u_a^p u_c=u_c.$$

Since $\psi _d$ is an automorphism of the ring $R_1$ we conclude that the ring $R$ generated by $u_a,u_b,u_c,u_d$ is a crossed product of 
$C_p=\langle u_d \rangle$ over $R_1$.
In particular it is an associative algebra. Hence the ring $R$ is an associative complex algebra generated by $u_a,u_b,u_c,u_d$ with the following relations.
$$ u_a^p=u_b^p=u_c^p=u_d^p=1$$
$$[u_a,u_b]=[u_a,u_d]=[u_b,u_c]=1,[u_a,u_c]=\zeta _p,[u_b,u_d]=\zeta _p, [u_c,u_d]=u_a.$$
The associativity of $R$ ensures that there exists a cocycle $f\in Z^2(G,\mathbb{C}^*)$ which satisfies the above relations.
We show that $f$ is nondegenerate by showing that there are no non-trivial $f$-regular elements in $G$.

Assume that $x=a^{r_1}b^{r_2}c^{r_3}d^{r_4}\in G$ is an $f$-regular element.
If $r_4\neq 0$ then $[u_b,u_x]\neq 1$ while $b \in C_G(x)$ and hence we can assume that $x=a^{r_1}b^{r_2}c^{r_3}$.\\
Then, if $r_3\neq 0$ then $[u_a,u_x]\neq 1$ while $a \in C_G(x)$ and hence we can assume that $x=a^{r_1}b^{r_2}$.\\
Then, if $r_2\neq 0$ then $[u_d,u_x]\neq 1$ while $d \in C_G(x)$ and hence we can assume that $x=a^{r_1}$.\\
But then, if $r_1\neq 0$ then $[u_c,u_x]\neq 1$ while $c \in C_G(x)$ and hence we get that $x$ is trivial and therefore $f$ is nondegenerate.
\end{proof}
Next, we show that the group $G_{(\text{xv})}$ is of central type in the $p>3$ case and in the $p=3$ case.
\begin{theorem}\label{th:existp41}
For prime $p\geq 5$, let $G$ be a group of order $p^4$ generated by four elements $\{a,b,c,d \}$ with the following relations, 
$$a^p=b^p=c^p=d^p=1, [a,b]=[a,c]=[a,d]=[b,c]=1,[d,b]=a,[d,c]=b.$$
And for $p=3$, $G_{15}$ let $G$ be a group of order $81$ generated by three elements $\{a,b,c \}$ with the following relations, 
$$a^9=b^3=c^3=1, [a,b]=1,cac^{-1}=ab,cbc^{-1}=a^6b.$$
Then, the groups $G$ are of central type.
\end{theorem}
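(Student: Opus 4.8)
The plan is to produce, in each case, an explicit nondegenerate cohomology class $[f]\in H^2(G,\C^*)$ by realizing $\C^fG$ as an iterated crossed product — in the spirit of the arguments given above for $G_{(\text{viii})}$ and $G_{(\text{xiv})}$ — and then to check nondegeneracy by peeling off, one at a time, the exponents of a hypothetical nontrivial $f$-regular element, using the centralizer lattice of $G$. For $p\ge5$ the relevant structure is that $Z(G)=\langle a\rangle$, that $A:=\langle a,b,c\rangle\cong C_p^3$ is abelian and normal, and that $C_G(x)=A$ for every $x\in A\setminus\langle a\rangle$ while $C_G(x)=G$ for $x\in\langle a\rangle$. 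I would first take $R_1=\mathbb{C}\langle a,b\rangle\cong\mathbb{C}[C_p^2]$, commutative with $u_a^p=u_b^p=1$, and let $C_p=\langle c\rangle$ act on it by the order-$p$ automorphism $\psi_c(u_a)=u_a$, $\psi_c(u_b)=\zeta_pu_b$; the resulting crossed product $R_2$ (with $u_c^p=1$) is then an associative algebra carrying the twisted structure $[u_a,u_b]=[u_a,u_c]=1$, $[u_b,u_c]=\zeta_p^{-1}$ on $A$, whose antisymmetric form has radical exactly $\langle a\rangle$. Next I would form $R$ as the crossed product of $C_p=\langle d\rangle$ over $R_2$ via
$$\psi_d(u_a)=\zeta_pu_a,\qquad\psi_d(u_b)=u_au_b,\qquad\psi_d(u_c)=u_bu_c,$$
i.e.\ conjugation by $d$ \emph{twisted} by the scalar $\zeta_p$ on $u_a$; this twist is unavoidable, for without it $u_a$ would stay central in $\C^fG$ and $a$ would be $f$-regular. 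One checks, as in the $G_{(\text{xiv})}$ argument, that $\psi_d$ respects the defining relations of $R_2$, so that $R$ is associative; then $R\cong\C^fG$ for a cocycle $f$ realizing the relations of $G$ up to scalars, together with the extra twist $[u_a,u_d]=\zeta_p^{-1}$ on the commuting pair $\{a,d\}$.

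The genuinely delicate step is verifying that $\psi_d$ has order exactly $p$. A short induction gives $\psi_d^k(u_b)=\zeta_p^{\binom{k}{2}}u_a^ku_b$ and $\psi_d^k(u_c)=\zeta_p^{\binom{k}{3}}u_a^{\binom{k}{2}}u_b^ku_c$, so that $\psi_d^p$ fixes $u_a$ and $u_b$ and sends $u_c\mapsto\zeta_p^{\binom{p}{3}}u_c$; since $\gcd(p,6)=1$ for $p\ge5$ one has $p\mid\binom{p}{2}$ and $p\mid\binom{p}{3}$, whence $\psi_d^p=\mathrm{id}$, while $\psi_d^k(u_a)=\zeta_p^ku_a$ shows no smaller power is the identity. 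This is precisely where the case $p=3$ breaks down, for the same arithmetic reason that lies behind Lemma~\ref{lemma:distinguishp2} (here $3\nmid\binom{3}{3}=1$). For nondegeneracy, suppose $x=a^{r_1}b^{r_2}c^{r_3}d^{r_4}$ is $f$-regular. Since $a\in C_G(x)$ and $[u_a,u_x]=\zeta_p^{-r_4}$, we get $p\mid r_4$, so $x\in A$. If $r_3\not\equiv0$, then $x\notin\langle a\rangle$, so $b\in A=C_G(x)$, but $[u_x,u_b]=\zeta_p^{r_3}\neq1$, a contradiction; hence $p\mid r_3$. If $r_2\not\equiv0$, then again $x\notin\langle a\rangle$, so $c\in A=C_G(x)$, but $[u_x,u_c]=\zeta_p^{-r_2}\neq1$, a contradiction; hence $p\mid r_2$. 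Finally $x=a^{r_1}$, with $d\in G=C_G(x)$ and $[u_x,u_d]=\zeta_p^{-r_1}$, forcing $p\mid r_1$ and $x=1$. Thus $f$ is nondegenerate and $G$ is of central type.

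For $p=3$ I would apply the same template to the different presentation $a^9=b^3=c^3=1$, $[a,b]=1$, $cac^{-1}=ab$, $cbc^{-1}=a^6b$, where $\langle a,b\rangle\cong C_9\times C_3$ is abelian and normal, $Z(G)=\langle a^3\rangle$, and $C_G(x)=\langle a,b\rangle$ for $x\in\langle a,b\rangle\setminus Z(G)$: take $R_1=\mathbb{C}\langle a,b\rangle$ and form $R$ as the crossed product of $C_3=\langle c\rangle$ over $R_1$ via an order-$3$ automorphism $\psi_c$ lifting $u_a\mapsto u_au_b$, $u_b\mapsto u_a^6u_b$ but twisted by roots of unity of order dividing $9$ so that the induced antisymmetric form on $\langle a,b\rangle$ has trivial radical; then $R\cong\C^fG$ with $u_c^3=1$. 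Verifying that this twisted $\psi_c$ still has order $3$ — that is, that the scalar accumulated upon cubing vanishes, which is exactly the congruence that goes the right way when $p=3$ — is once more the crux, and nondegeneracy then follows by peeling the exponents of $u_a,u_b,u_c$ as above. I expect these order-checking computations for the top automorphism, together with an accurate description of the centralizers of $G$, to be the main obstacle; the rest is routine crossed-product bookkeeping and translation between $\psi$-orbits and commutators in $\C^fG$.
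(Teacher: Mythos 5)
For $p\ge 5$ your construction is, up to cosmetic differences, exactly the paper's proof: the same cocycle on $\langle a,b,c\rangle$ (you build it as a crossed product of $\langle c\rangle$ over $\C\langle a,b\rangle$, whereas the paper simply declares it on the abelian group $H=\langle a,b,c\rangle$), the same twisted automorphism $\psi_d(u_a)=\zeta_pu_a$, $\psi_d(u_b)=u_au_b$, $\psi_d(u_c)=u_bu_c$, the same order-$p$ verification via $p\mid\binom{p}{2}$ and $p\mid\binom{p}{3}$ (your closed form $\psi_d^k(u_c)=\zeta_p^{\binom{k}{3}}u_a^{\binom{k}{2}}u_b^ku_c$ is a mild refinement of the paper's evaluation at $k=p$), and the same peeling of $r_4,r_3,r_2,r_1$ against $a,b,c,d$ in the respective centralizers. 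This half is complete and correct.

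The $p=3$ case, however, is only a template, and the one concrete design criterion you state for the twist is wrong: you ask for roots of unity making ``the induced antisymmetric form on $\langle a,b\rangle$'' have trivial radical, but $\langle a,b\rangle\cong C_9\times C_3$ is not of the form $A\times A$, hence admits no nondegenerate cocycle, so no choice of twist can achieve this. The paper's actual construction puts $[u_a,u_b]=\zeta_3$ on $\langle a,b\rangle$ (whose radical is $\langle a^3\rangle=Z(G)$) and then twists the lift of conjugation by $c$ via $\psi_c(u_a)=\zeta_9u_bu_a$, $\psi_c(u_b)=u_a^6u_b$; the role of the $\zeta_9$ is not to fix the form on $\langle a,b\rangle$ but to kill the regularity of the central element $a^3$ inside the full algebra, since $u_cu_a^3u_c^{-1}=\zeta_9^{3}(u_bu_a)^3=\zeta_3u_a^3$ while $c\in C_G(a^3)=G$. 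You have deferred precisely the computations you correctly identify as the crux — that this $\psi_c$ respects $[u_a,u_b]=\zeta_3$, that $\psi_c^3=\mathrm{id}$ (the scalar accumulated on cubing must vanish, which is where the $p=3$ arithmetic goes the right way), and the regularity analysis using $C_G(c)=\langle c,a^3\rangle$ — so as written the $p=3$ half is not a proof.
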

\begin{proof}
We start with the case where $p\geq 5$.
First, notice that $Z(G)=\langle a \rangle$ and $G^{\shortmid}=\langle a,b \rangle$.
We wish to construct a nondegenerate cohomology class $[f]\in H^2(G,\C^*)$ such that the restriction of $[f]$ to $G^{\shortmid}$ is trivial.
This twisted group algebra $\mathbb{C}^fG$ is generated as algebra by $u_a,u_b,u_c,u_d$ such that
$$u_a^p=u_b^p=u_c^p=u_d^p=1.$$
We use a construction of crossed product in the following way.
First we define a cohomology class $[\alpha]$ over the subgroup $H:=\langle a,b,c \rangle$ using the following relations.
$$[\alpha ]:\quad [u_a,u_b]=[u_a,u_c]=1,[u_c,u_b]=\zeta _p,$$
where $\zeta _p$ is a $p$-th root of unit.
Consider the following $C_p=\langle d \rangle$-action on $\mathbb{C}^{\alpha}H$.
\begin{align*}
\psi _d: & \mathbb{C}^{\alpha}H\rightarrow \mathbb{C}^{\alpha}H\\
& u_a\mapsto \zeta _p u_a\\
&          u_b\mapsto u_au_b\\
 &          u_c\mapsto u_bu_c.
\end{align*}
We will show that $\psi _d$ is an automorphism of order $p$ of the ring $\mathbb{C}^{\alpha}H$.
We need to check if the following equalities hold.
\begin{enumerate}
\item  $\psi _d(u_b u_a u_b^{-1})= \psi _d(u_b)\psi_d( u_a)\psi_d( u_b)^{-1}.$
\item  $ \psi _d(u_c u_a u_c^{-1})= \psi _d(u_c)\psi_d( u_a)\psi_d( u_c)^{-1}.$
\item   $\psi _d(u_c u_b u_c^{-1})= \psi _d(u_c)\psi_d( u_b)\psi_d( u_c)^{-1}.$
\end{enumerate}
Indeed,
\begin{enumerate}
\item $\psi _d(u_b u_a u_b^{-1})=\psi _d(u_a)=\zeta _p u_a,$ \\
\text{ and on the other hand, }\\
$\psi _d(u_b)\psi_d( u_a)\psi_d( u_b)^{-1}=u_au_b \zeta _p u_a(u_au_b)^{-1}=\zeta _p u_a.$\\
\item $\psi _d(u_c u_a u_c^{-1})=\psi _d(u_a)=\zeta _p u_a,$ \\
\text{ and on the other hand, }\\
$\psi _d(u_c)\psi_d( u_a)\psi_d( u_c)^{-1}=u_bu_c \zeta _p u_a(u_bu_c)^{-1}=\zeta _p u_a.$\\
\item $\psi _d(u_c u_b u_c^{-1})=\psi_d(\zeta _p u_b)=\zeta _p u_au_b,$\\
\text{ and on the other hand, }\\
$\psi _d(u_c)\psi_d( u_b)\psi_d( u_c)^{-1}=u_bu_c u_au_b (u_bu_c)^{-1}=u_au_b[u_c,u_b]=\zeta _p u_au_b.$
\end{enumerate}
We now need to check that $\psi _d$ has order $p$.
Clearly 
$$\psi_d^p(u_a)=\zeta _p^p u_a=u_a.$$
Notice that
\begin{align*}
& \psi_d^k(u_b)=\psi_d^{k-1}(u_au_b)=\psi_d^{k-1}(u_a)\psi_d^{k-1}(u_b)=\ldots \\
& =\bigg(\prod _{i=0}^{k-1}\psi_d^i(u_a)\bigg)u_b=\bigg(\prod _{i=0}^{k-1}\zeta _p^i u_a\bigg)u_b=\zeta _p^{\frac{k(k-1)}{2}}u_a^k u_b.
\end{align*}
Therefore,
$$\psi_d^p(u_b)=\zeta _p^{\frac{p(p-1)}{2}}u_a^p u_b=u_b.$$
In order to show that $ \psi_d^p(u_c)=u_c$ we will use two observations.
First, it is easy to prove by induction that for any natural $n\geq 1$,
$$\frac{(n+1)n}{2}+\frac{n(n-1)}{2}+\ldots +\frac{2\cdot 1}{2}=\frac{(n+2)(n+1)n}{6}.$$
Second, for any prime number $p\geq 5$
$$\frac{p(p-1)(p-2)}{6}$$
is divisible by $p$. We will also use that $u_a$ and $u_b$ commute. Now,
\begin{align*}
& \psi_d^p(u_c)=\psi_d^{p-1}(u_bu_c)=\psi_d^{p-1}(u_b)\psi_d^{p-1}(u_c)=\ldots =\bigg(\prod _{i=0}^{p-1}\psi_d^i(u_b)\bigg)u_c=\\
& \bigg(\prod _{i=0}^{p-1} \zeta _p^{\frac{i(i-1)}{2}}u_a^i u_b\bigg)u_c=\bigg(\zeta _p^{\sum_{i=0}^{p-1}\frac{i(i-1)}{2}}u_a^{\sum_{i=0}^{p-1} i} u_b^p \bigg)u_c=
\zeta _p^{\frac{p(p-1)(p-2)}{6}}u_a^{\frac{p(p-1)}{2}}=u_c.
\end{align*}
Since $\psi _d$ is an automorphism of the ring $\mathbb{C}^{\alpha}H$ we conclude that the ring $R$ generated by $u_a,u_b,u_c,u_d$ is a crossed product of $C_p$ over $\mathbb{C}^{\alpha}H$.
In particular it is an associative algebra. Hence the ring $R$ is an associative complex algebra generated by $u_a,u_b,u_c,u_d$ with the following relations.
\begin{equation}
 [u_a,u_b]=[u_a,u_c]=1,[u_d,u_a]=\zeta _p,[u_c,u_b]=\zeta _p, [u_d,u_b]=u_a,[u_d,u_c]=u_b.
\end{equation}
The associativity of $R$ ensures that there exists a cocycle $f\in Z^2(G,\mathbb{C}^*)$ which satisfies the above relations.
We show that $f$ is nondegenerate by showing that there are no non-trivial $f$-regular elements in $G$.
Assume that $x=a^{r_1}b^{r_2}c^{r_3}d^{r_4}\in G$ is an $f$-regular element.
If $r_4\neq 0$ then $[u_a,u_x]\neq 1$ while $a \in C_G(x)$ and hence we can assume that $x=a^{r_1}b^{r_2}c^{r_3}$.
Then, if $r_3\neq 0$ then $[u_b,u_x]\neq 1$ while $b \in C_G(x)$ and hence we can assume that $x=a^{r_1}b^{r_2}$.
Then, if $r_2\neq 0$ then $[u_c,u_x]\neq 1$ while $c \in C_G(x)$ and hence we can assume that $x=a^{r_1}$.
But then, if $r_1\neq 0$ then $[u_d,u_x]\neq 1$ while $d \in C_G(x)$ and hence we get that $x$ is trivial and therefore $f$ is nondegenerate.

The case $p=3$ is done in a similar way.
We start by defining a cohomology class $[\beta ]$ over the commutative group $H=\langle a,b \rangle$ as follows.
$$[\beta ]:\quad [u_a,u_b]=\zeta _3,$$
where $\zeta _3$ is a root of unity of order $3$.
Then we define a $\langle c \rangle$-action on $\mathbb{C}^{\beta}H$ by 
\begin{align*}
\psi _{c}: & \mathbb{C}^{\alpha}H\rightarrow \mathbb{C}^{\alpha}H\\
& u_a\mapsto \zeta _9 u_bu_a\\
&          u_b\mapsto u_a^6 u_b,
\end{align*}
where $\zeta _9$ is a root of unity of order $9$.
We check that indeed $\psi _{c}$ is an automorphism of order $3$ of the ring $\mathbb{C}^{\beta}H$ and consequently we get an associative ring $R$
spanned by $u_a,u_b,u_c$ with the following relations
$$u_a^9=u_b^3=u_c^3=1,[u_a,u_b]=\zeta _3, [u_c,u_a]=\zeta _9u_b,[u_c,u_b]=u_a^6.$$
The associativity of $R$ ensures that there exists a cocycle $f\in Z^2(G,\mathbb{C}^*)$ which satisfies the above relations.
We show that $f$ is nondegenerate by showing that there are no non-trivial $f$-regular elements in $G$. Notice that 
$$C_G(a)=C_G(b)=\langle a,b \rangle, C_G(c)=\langle c,a^3 \rangle.$$
Assume that $x=a^ib^jc^k \in G$ is an $f$-regular element. 
If $k\neq 0$ then $[u_{a^3},u_x]\neq 1$ while $a^3 \in C_G(x)$ and hence we can assume that $x=a^i b^j$.
If $j\neq 0$ then $[u_a,u_x]\neq 1$ while $a \in C_G(x)$ and hence we can assume that $x=a^i$.
But then, if $i\neq 0$ then $[u_b,u_x]\neq 1$ while $b \in C_G(x)$ and hence we get that $x$ is trivial and therefore $f$ is nondegenerate.
\end{proof}
This completes the proof of Theorem 1.
\subsection{$p$ is even} 
In this section we complete the proof of Theorem 2.
All the numbering in this section are with correspondence with Table~\ref{tab:16}.
Recall that we already proved that from the groups  $G_{(\text{i})}$-$G_{(\text{vii})}$, the only groups of central type are
 $G_{(\text{iii})}, G_{(\text{v})}$.

We start this section by showing that the group $G_{(\text{viii})}$ is not of central type.
\begin{lemma}\label{th:group282}
Let $G=\langle a,b \rangle$ be a group of order $16$ such that 
$$a^4=b^4=1,\quad [a,b]=a^2.$$
Then, for any cocycle $f\in Z^2(G,\mathbb{C}^*)$, the element $b^2$ is $f$-regular.
In particular, $G$ is not a group of central type.
\end{lemma}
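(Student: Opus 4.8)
The plan is to exploit that $b^2$ is central in $G$ and to keep careful track of scalars in $\mathbb{C}^fG$. First I would record the relevant structure of $G$: from $[a,b]=a^2$ and $a^4=1$ one reads off $bab^{-1}=a^{-1}$, and then
\[
b^2ab^{-2}=b(bab^{-1})b^{-1}=ba^{-1}b^{-1}=(bab^{-1})^{-1}=a,
\]
so $b^2$ commutes with $a$; as it obviously commutes with $b$, we have $b^2\in Z(G)$ and hence $C_G(b^2)=G$. Since $b^2$ is central, $\alpha_f(b^2,-)$ is a homomorphism $G\to\mathbb{C}^*$, so $b^2$ is $f$-regular if and only if $u_{b^2}$ commutes with the algebra generators $u_a$ and $u_b$ of $\mathbb{C}^fG$. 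Commutation with $u_b$ is automatic, so everything comes down to proving $\alpha_f(a,b^2)=1$.

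For this I would work with $A:=u_a$ and $B:=u_b$, normalized as in \eqref{eq:orderof} so that $A^4=B^4=1$ (and $u_1=1$). Because $bab^{-1}=a^{-1}$, the element $BAB^{-1}$ is a nonzero scalar multiple of $u_{a^{-1}}$; since $A^{-1}$ is also a scalar multiple of $u_{a^{-1}}$, we may write $BAB^{-1}=cA^{-1}$ for some $c\in\mathbb{C}^*$. Conjugating once more,
\[
B^2AB^{-2}=B(cA^{-1})B^{-1}=c\,(BAB^{-1})^{-1}=c\,(cA^{-1})^{-1}=A,
\]
so $B^2$ commutes with $A$. As $B^2$ and $u_{b^2}$ differ only by the scalar $f(b,b)$, this gives $u_au_{b^2}=u_{b^2}u_a$, i.e. $\alpha_f(a,b^2)=1$. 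Together with $\alpha_f(b,b^2)=1$ this shows $b^2$ is $f$-regular for every $f\in Z^2(G,\mathbb{C}^*)$. Finally, the relations force $b$ to have order $4$ (if $b^2=1$ the presentation collapses to a group of order at most $8$), so $b^2$ is a nontrivial $f$-regular element; hence no cocycle on $G$ is nondegenerate, and $G$ is not of central type by Definition~\ref{def:altdef}.

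The computation is short, and the only point that needs care — the one I would double-check — is the scalar bookkeeping: that $BAB^{-1}$ is a scalar multiple of $A^{-1}$ (and not of another power of $A$), and that the unknown scalar $c$ cancels upon passing to $B^2$. One could instead invoke Lemma~\ref{lemma:commupto} to see a priori that $\alpha_f(a,b^2)\in\{1,-1\}$ and then use the same conjugation identity to exclude $-1$; I prefer the direct version since it settles the sign in one stroke.
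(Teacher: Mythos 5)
Your proof is correct and follows essentially the same route as the paper's: a direct computation in $\mathbb{C}^fG$ showing that the unknown scalar in the conjugation relation cancels upon squaring, so that $u_{b^2}$ is central and $b^2$ is $f$-regular. The paper conjugates in the other direction (writing $u_au_bu_a^{-1}=\delta u_a^2u_b$ and squaring $u_b$, with an appeal to Lemma~\ref{lemma:commupto} to control $\delta$), but the substance is identical; your version, which exhibits the cancellation $c\cdot c^{-1}=1$ explicitly and needs no constraint on the scalar, is if anything slightly cleaner.
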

\begin{proof}
Let $f\in Z^2(G,\mathbb{C}^*)$, and assume that 
$$[u_a,u_b]=\delta u_a^2.$$
Then
\begin{equation}\label{eq:maybelast}
u_au_bu_a^{-1}=\delta u_a^2u_b.
\end{equation}
Since $a^2$ and $b$ commutes in $G$, then by Lemma~\ref{lemma:commupto}, $\delta$ is a root of unity of order dividing $4$.
Therefore, using~\eqref{eq:maybelast} we get
\begin{equation}
u_au_b^2u_a^{-1}=(\delta u_a^2u_b)^2=\delta ^2u_a^2(u_bu_a^2)u_b=\delta ^2u_a^2(\delta ^2u_a^2u_b)u_b=\delta ^4=1.
\end{equation} 
Therefore, $b^2$ is an $f$-regular element in $G$. Moreover, $u_b$ is a nontrivial central element in $\C ^fG$ and hence the group $G$ is not of central type
\end{proof}

Next, we show that the group $G_{(\text{xi})}$ is of central type.
\begin{theorem}\label{th:group9}
Let $G=\langle a,b,c \rangle$ be a group of order $16$ such that 
$$a^4=b^2=c^2=1,\quad [a,b]=[b,c]=1, \quad [a,c]=a^2.$$
Then $G$ is a group of central type. 
\end{theorem}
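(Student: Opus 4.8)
The plan is to follow the same strategy as in the odd-prime theorems above: exhibit a nondegenerate cohomology class $[f]\in H^2(G,\C^*)$ by realizing $\C^fG$ as an iterated crossed product, and then check directly, via Definition~\ref{def:altdef}, that $1$ is the only $f$-regular element. First I would record the structural data that will be used in the nondegeneracy step: $Z(G)=\langle a^2,b\rangle\cong C_2\times C_2$ and $G^{\shortmid}=\langle a^2\rangle$, and that $a$, $b$ lie in the centralizer of every element of the form $a^{r_1}b^{r_2}$, while $a^2\in Z(G)$ lies in every centralizer.

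For the construction I would build $\C^fG$ in layers. Start with the commutative algebra $R_1=\C\langle a\rangle=\C[u_a]/(u_a^4-1)$, let $\psi_b$ be the $C_2$-action on $R_1$ sending $u_a\mapsto -u_a$; since $(-u_a)^4=1$ and $\psi_b^2=\mathrm{id}$ while $\psi_b\neq\mathrm{id}$, the associated crossed product $R_2$ of $C_2=\langle u_b\rangle$ over $R_1$ (with $u_b^2=1$) is an associative algebra of dimension $8$ with $u_au_b=-u_bu_a$ (the value $-1$ being consistent with Lemma~\ref{lemma:commupto}, as $\gcd(\circ(a),\circ(b))=2$). Next, define $\psi_c$ on $R_2$ by $u_a\mapsto i\,u_a^{-1}$ and $u_b\mapsto u_b$, where $i^2=-1$. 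I would verify that $\psi_c$ is a well-defined algebra endomorphism, i.e.\ that it respects $u_a^4=u_b^2=1$ and the twisted relation $u_au_b=-u_bu_a$ (the latter reducing to the identity $u_a^{-1}u_b=-u_bu_a^{-1}$, valid in $R_2$), and that $\psi_c^2=\mathrm{id}$ with $\psi_c\neq\mathrm{id}$, so $\psi_c$ is an automorphism of order $2$. Then the crossed product $R$ of $C_2=\langle u_c\rangle$ over $R_2$ (with $u_c^2=1$) is associative of dimension $16$, spanned by $\{u_a^{r_1}u_b^{r_2}u_c^{r_3}\}$, with relations
\begin{equation*}
u_a^4=u_b^2=u_c^2=1,\qquad u_au_b=-u_bu_a,\qquad u_bu_c=u_cu_b,\qquad u_cu_au_c^{-1}=i\,u_a^{-1}.
\end{equation*}
Associativity of $R$ guarantees that these structure constants come from a cocycle $f\in Z^2(G,\C^*)$ with $\C^fG\cong R$; the last relation is the scalar-twisted form of $[a,c]=a^2$, since $u_cu_a^2u_c^{-1}=(i\,u_a^{-1})^2=-u_a^2$, so in particular $u_{a^2}u_c=-u_cu_{a^2}$.

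To prove nondegeneracy, take an $f$-regular $x=a^{r_1}b^{r_2}c^{r_3}$. Since $u_{a^2}$ commutes with $u_a$ and $u_b$ but anticommutes with $u_c$, and $a^2\in Z(G)\subseteq C_G(x)$, $f$-regularity forces $(-1)^{r_3}=1$, hence $r_3=0$. Now $x\in\langle a,b\rangle$ and $a\in C_G(x)$, so $u_au_b=-u_bu_a$ forces $(-1)^{r_2}=1$, hence $r_2=0$ and $x=a^{r_1}$. Then $b\in C_G(x)$ and $u_bu_a=-u_au_b$ force $(-1)^{r_1}=1$, so $x\in\{1,a^2\}$, and $x=a^2$ is excluded since $u_{a^2}u_c=-u_cu_{a^2}$ with $c\in C_G(a^2)=G$. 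Thus $x=1$, $f$ is nondegenerate, and $G$ is of central type.

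The step requiring the most care is the verification that $\psi_c\colon u_a\mapsto i\,u_a^{-1}$ is a genuine automorphism of $R_2$ of order $2$; this is the even-case counterpart of the order computations in the odd-prime theorems. What makes it succeed here, and fail for odd $p$ (compare Lemma~\ref{lemma:groupnine} together with Lemma~\ref{lemma:distinguishp2}), is precisely that $-1$ admits a primitive fourth root of unity in $\C$: the twist by $i$ is what makes $\psi_c^2=\mathrm{id}$ possible. For an odd prime the corresponding scalar could only be a $p$-th root of unity, which is exactly the obstruction that makes $a^p$ forcibly $f$-regular there. The remaining verifications are short direct computations of the kind performed in the theorems above.
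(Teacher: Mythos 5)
Your proposal is correct and follows essentially the same route as the paper: the paper also realizes $\C^fG$ as a crossed product of $C_2=\langle u_c\rangle$ over the twisted group algebra $\C^{\alpha}\langle a,b\rangle$ with $[u_a,u_b]=-1$, using exactly the automorphism $\psi_c\colon u_a\mapsto i\,u_a^3=i\,u_a^{-1}$, $u_b\mapsto u_b$, and then runs the same elimination of exponents ($r_3$, then $r_2$, then $r_1$) to show $1$ is the only $f$-regular element. The only cosmetic difference is that you build the bottom layer $\C^{\alpha}\langle a,b\rangle$ itself as a crossed product of $C_2$ over $\C\langle a\rangle$, which the paper simply takes as given.
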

\begin{proof}
We wish to construct a nondegenerate cohomology class $[f]\in H^2(G,\C^*)$.
The corresponding twisted group algebra $\mathbb{C}^fG$ is generated as algebra by $u_a,u_b,u_c$ such that
\begin{equation}
u_a^4=u_b^2=u_c^2=1.
\end{equation}
We use a construction of crossed product in the following way.
We start from the twisted group algebra $R_1=\C ^{\alpha}\langle a,b \rangle$, where $[\alpha] \in H^2(\langle a,b \rangle,\C^*)$
is defined by the relation
$$[u_a,u_b]=-1.$$

Consider the following $C_2=\langle c \rangle$-action on $R_1$.
\begin{align*}
\psi _c: & R_1  \rightarrow  R_1\\
& u_a\mapsto i u_a^3\\
& u_b\mapsto u_b.
\end{align*}
By setting $\psi_c(u_a^j)=\psi _c(u_a)^j$ we ensure that $\psi _c$ is a homomorphism of $R_1$.

Next, we check that the automorphism $\psi _c$ has order $2$.
Clearly $\psi _c ^2(u_b)=u_b$.
Now,
\begin{equation}
\psi _c ^2(u_a)=\psi _c (iu_a ^3)=i\psi _c(u_a)^3=i\cdot i^3 u_a^9=u_a.
\end{equation}
We conclude that indeed $\psi _c$ is an automorphism of order $2$ of $R_1$.

Since $\psi _c$ is an automorphism of the ring $R_1$, the ring $R$ generated by $u_a,u_b,u_c$ is a crossed product of 
$C_2=\langle u_c \rangle$ over $R_1$.
In particular it is an associative algebra. Hence the ring $R$ is an associative complex algebra generated by $u_a,u_b,u_c$ with the following relations.
$$ u_a^4=u_b^2=u_c^2=1,\quad  [u_a,u_b]=-1,[u_c,u_b]=1,[u_c,u_a]=iu_a^2.$$
The associativity of $R$ ensures that there exists a cocycle $f\in Z^2(G,\mathbb{C}^*)$ which satisfies the above relations.
We show that $f$ is nondegenerate by showing that there are no non-trivial $f$-regular elements in $G$.

Let $x=a^tb^jc^k \in G$ be an $f$-regular element.
If $k\neq 0$ then $[u_x,u_a^2]\neq 1$ while $a^2\in C_G(x)$. Hence we may assume that $k= 0$ and  $x=a^tb^j$.\\
If $j\neq 0$ then $[u_x,u_a]\neq 1$ while $a\in C_G(x)$. Hence we may assume that $j=0$ and $x=a^t$.\\
If $t$ is odd, then $[u_x,u_b]\neq 1$ while $b\in C_G(x)$. Hence we may assume that $t$ is even.\\
But then, if $t\equiv 2($mod $4)$, then $[u_x,u_c]\neq 1$ while $c\in C_G(x)$ and therefore $x$ is trivial.
We conclude that $f$ is nondegenerate and $G$ is a group of central type. 
\end{proof}

Next, we show that the group $G_{(\text{x})}$ is of central type.
\begin{theorem}\label{th:group10}
Let $G=\langle a,b,c \rangle$ be a group of order $16$ such that 
$$a^4=b^2=c^2=1,\quad [a,b]=[b,c]=1, \quad [a,c]=b.$$
Then $G$ is a group of central type. 
\end{theorem}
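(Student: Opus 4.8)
The plan is to mimic the proof of Theorem~\ref{th:group9}: I would exhibit a nondegenerate cocycle on $G$ by presenting $\mathbb{C}^fG$ as an iterated crossed product and then checking directly that $1$ is its only $f$-regular element. First I would record that $b=[a,c]$ is central and that conjugation of $a^2$ by $c$ returns $a^2$, so that $Z(G)=\langle a^2,b\rangle\cong C_2\times C_2$ while $G^{\shortmid}=\langle b\rangle$. In particular $a^2,b\in C_G(g)$ for every $g\in G$, one has $a\in C_G(a^tb^j)$, and $c\in C_G(a^2)$; these are exactly the centralizers the nondegeneracy argument will use.

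For the construction I would start from $R_1=\mathbb{C}^{\alpha}\langle a,b\rangle$ on the abelian group $\langle a,b\rangle\cong C_4\times C_2$, where $[\alpha]$ is defined by $[u_a,u_b]=-1$; here $u_a^4=u_b^2=1$ and, crucially, $(u_au_b)^2=-u_a^2$ and hence $(u_au_b)^4=1$. Then I would let $C_2=\langle c\rangle$ act on $R_1$ via $\psi_c$, defined on generators by $u_a\mapsto u_au_b$ and $u_b\mapsto u_b$ and extended by $\psi_c(u_a^i)=\psi_c(u_a)^i$ to a ring homomorphism. One checks that $\psi_c$ respects the relations $u_a^4=1$, $u_b^2=1$, $u_au_b=-u_bu_a$, so it is well defined, and that $\psi_c^2=\mathrm{id}$ (immediate from $\psi_c^2(u_a)=(u_au_b)u_b=u_a$), so $\psi_c$ is an automorphism of $R_1$ of order $2$. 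Therefore the ring $R$ generated by $u_a,u_b,u_c$ is a crossed product of $C_2=\langle u_c\rangle$ over $R_1$ with $u_c^2=1$, an associative $16$-dimensional complex algebra with relations $u_a^4=u_b^2=u_c^2=1$, $[u_a,u_b]=-1$, $[u_c,u_b]=1$, $[u_c,u_a]=-u_b$; identifying $u_gu_c$ with $u_{gc}$ for $g\in\langle a,b\rangle$, the associativity of $R$ yields a cocycle $f\in Z^2(G,\mathbb{C}^*)$ with $R=\mathbb{C}^fG$.

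To see that $f$ is nondegenerate, let $x=a^tb^jc^k\in G$ be $f$-regular. If $k$ is odd then $a^2\in C_G(x)$, yet $u_cu_{a^2}u_c^{-1}=(u_au_b)^2=-u_{a^2}$ forces $[u_x,u_{a^2}]=-1$, a contradiction; so $k=0$. If $j=1$ then $a\in C_G(x)$, yet $u_bu_au_b^{-1}=-u_a$ forces $[u_x,u_a]=-1$; so $j=0$ and $x=a^t$. If $t$ is odd then $b\in C_G(x)$, yet $u_a^tu_b=(-1)^tu_bu_a^t$ forces $[u_x,u_b]=-1$; so $t$ is even. Finally, if $t\equiv 2\pmod 4$ then $c\in C_G(x)$, yet the same computation as in the first step gives $[u_x,u_c]=-1$. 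Hence $x=1$, so $f$ is nondegenerate and $G$ is of central type.

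The only genuinely delicate point is the choice of $[\alpha]$ on $\langle a,b\rangle$: with the trivial class the central element $b$ would remain $f$-regular (it commutes, up to scalars, with everything), so one is forced to use the unique nontrivial class on $C_4\times C_2$, and then the identities $(u_au_b)^2=-u_a^2$ and $(u_au_b)^4=1$ are precisely what makes $\psi_c$ both compatible with $[\alpha]$ and of order $2$. Everything after that is routine commutator bookkeeping inside a $16$-dimensional algebra, with no inductions required.
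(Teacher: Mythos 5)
Your proposal is correct and follows essentially the same route as the paper: the same crossed product of $C_2=\langle c\rangle$ over $\mathbb{C}^{\alpha}\langle a,b\rangle$ with $[u_a,u_b]=-1$, the same automorphism (your $u_a\mapsto u_au_b$ equals the paper's $u_a\mapsto -u_bu_a$ since $u_au_b=-u_bu_a$), and the same four-step elimination $k,j,t$ to verify nondegeneracy. The closing remark about the trivial class forcing $b$ to be $f$-regular is a heuristic aside rather than a fully justified claim, but nothing in the proof depends on it.
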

\begin{proof}
We wish to construct a nondegenerate cohomology class $[f]\in H^2(G,\C^*)$.
The corresponding twisted group algebra $\mathbb{C}^fG$ is generated as algebra by $u_a,u_b,u_c$ such that
\begin{equation}
u_a^4=u_b^2=u_c^2=1.
\end{equation}
We use a construction of crossed product in the following way.
We start from the twisted group algebra $R_1=\C ^{\alpha}\langle a,b \rangle$, where $[\alpha] \in H^2(\langle a,b \rangle,\C^*)$
is defined by the relation
$$[\alpha]:\quad [u_a,u_b]=-1.$$
Consider the following $C_2=\langle c \rangle$-action on $R_1$.
\begin{align*}
\psi _c: & R_1  \rightarrow  R_1\\
& u_a\mapsto - u_bu_a\\
& u_b\mapsto u_b,
\end{align*}
and $\psi_c(u_a^j)=\psi _c(u_a)^j$, $\psi_c(u_b^j)=\psi _c(u_b)^j$.

We will show that $\psi _c$ is an automorphism of order $2$ of the ring $R_1$.
We need to check if the following equality holds:
$$\psi _c(u_a u_b u_a^{-1})= \psi _c(u_a)\psi_c( u_b)\psi_c( u_a)^{-1}.$$ 
Indeed,
$$\psi _c(u_a u_b u_a^{-1})=\psi _c(-u_b)=-u_b,$$
and on the other hand
$$\psi _c(u_a)\psi_c( u_b)\psi_c( u_a)^{-1}=(- u_b u_a)(u_b)(-u_a^3u_b)=-u_b.$$

Next, we check that the automorphism $\psi _c$ has order $2$.
Clearly $\psi _c ^2(u_b)=u_b$.
Now,
\begin{equation}
\psi _c ^2(u_a)=\psi _c (- u_b u_a)=-\psi _c(u_a)\psi _c(u_b)=-(-u_au_b)(u_b)=u_a.
\end{equation}
We conclude that indeed $\psi _c$ is an automorphism of order $2$ of $R_1$.

Since $\psi _c$ is an automorphism of the ring $R_1$, the ring $R$ generated by $u_a,u_b,u_c$ is a crossed product of 
$C_2=\langle u_c \rangle$ over $R_1$.
In particular it is an associative algebra. Hence the ring $R$ is an associative complex algebra generated by $u_a,u_b,u_c$ with the following relations.
$$ u_a^4=u_b^2=u_c^2=1,\quad  [u_a,u_b]=-1,[u_c,u_b]=1,[u_c,u_a]=-u_b.$$
The associativity of $R$ ensures that there exists a cocycle $f\in Z^2(G,\mathbb{C}^*)$ which satisfies the above relations.
We show that $f$ is nondegenerate by showing that there are no non-trivial $f$-regular elements in $G$.

Let $x=a^tb^jc^k \in G$ be an $f$-regular element.
If $k\neq 0$ then $[u_x,u_a^2]\neq 1$ while $a^2\in C_G(x)$. Hence we may assume that $k= 0$ and  $x=a^tb^j$.\\
If $j\neq 0$ then $[u_x,u_a]\neq 1$ while $a\in C_G(x)$. Hence we may assume that $j=0$ and $x=a^t$.\\
If $t$ is odd, then $[u_x,u_b]\neq 1$ while $b\in C_G(x)$. Hence we may assume that $t$ is even.\\
But then, if $t\equiv 2($mod $4)$, then $[u_x,u_c]\neq 1$ while $c\in C_G(x)$ and therefore $x$ is trivial.
We conclude that $f$ is nondegenerate and $G$ is a group of central type. 
\end{proof}
Next, we show that the group $G_{(\text{xi})}$ is not of central type.
\begin{lemma}
 Let $G=\langle a,b,c \rangle$ be a group of order $16$ such that
 $$a^4=c^2=1,\quad a^2=b^2, \quad [a,c]=[b,c]=1,[a,b]=a^2.$$
 Then, for any cocycle $f\in Z^2(G,\mathbb{C}^*)$, the element $a^2$ is $f$-regular.
 In particular, $G$ is not a group of central type.
\end{lemma}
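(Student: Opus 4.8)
The plan is, first, to recognise the group. The relations $a^4=1$, $b^2=a^2$ and $[a,b]=a^2$ (which, given $a^4=1$, is the same as $bab^{-1}=a^{-1}$) are a presentation of the quaternion group $Q_8$, whose centre is $\langle a^2\rangle$. Since $c$ is central of order $2$ and $c\notin\langle a,b\rangle$ (else $|G|\le 8$), we get $G\cong Q_8\times C_2$; in particular $a^2$ is central in $G$, so $C_G(a^2)=G$. Hence $a^2$ is $f$-regular if and only if $u_{a^2}$ is central in $\C^fG$, and since $f$-regularity is a cohomological invariant we may replace $f$ by any cohomologous cocycle. I would normalise it by~\eqref{eq:orderof} so that $u_g^{\circ(g)}=1$ for all $g\in G$; then $u_a^2=\alpha u_{a^2}$ and $u_b^2=\beta u_{a^2}$ with $\alpha^2=\beta^2=1$, and $u_{a^2}^2=1$.

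Since $G=\langle a,b,c\rangle$, it suffices to show that $u_{a^2}$ commutes with $u_a$, with $u_c$, and with $u_b$. Commuting with $u_a$ is immediate, as $u_{a^2}$ is a scalar multiple of $u_a^2$. For $u_c$: because $a$ and $c$ commute, $[u_a,u_c]$ is a scalar, and by Lemma~\ref{lemma:commupto} it is a root of unity of order dividing $\gcd(\circ(a),\circ(c))=2$; therefore $[u_a^2,u_c]=[u_a,u_c]^2=1$, and again the scalar multiple $u_{a^2}$ of $u_a^2$ commutes with $u_c$.

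The heart of the matter is that $u_{a^2}$ commutes with $u_b$, and here I would pass to the restriction $f'$ of $f$ to $\langle a,b\rangle\cong Q_8$. Recalling the well-known fact that the Schur multiplier of $Q_8$ is trivial, so that $H^2(Q_8,\C^*)$ is trivial, the cocycle $f'$ is a coboundary: there is $t:\langle a,b\rangle\to\C^*$ such that the elements $v_g:=t(g)^{-1}u_g$ multiply exactly as the corresponding group elements, giving an algebra isomorphism $\C\langle a,b\rangle\cong\C^{f'}\langle a,b\rangle$ that identifies $a^2$ with $v_{a^2}$. As $a^2$ is central in $Q_8$, the element $v_{a^2}$ is central in $\C^{f'}\langle a,b\rangle$; in particular it commutes with $v_b$, and since scalars cancel in commutators, $u_{a^2}$ commutes with $u_b$. (Alternatively one can try to argue directly: writing $[u_a,u_b]=\delta u_{a^2}$ and chasing the relations $u_a^2=\alpha u_{a^2}$, $u_b^2=\beta u_{a^2}$, $bab^{-1}=a^{-1}$ one quickly obtains $[u_{a^2},u_b]=\delta^2$ and $\delta^4=1$; the obstacle — and the reason the cohomological input is essentially unavoidable — is that these relations by themselves do not force $\delta^2=1$, and it is precisely the vanishing of $H^2(Q_8,\C^*)$ that excludes $\delta^2=-1$.)

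Putting the three cases together, $u_{a^2}$ is central in $\C^fG$, so $a^2$ is $f$-regular for every $f\in Z^2(G,\C^*)$. Since $a^2\ne 1$, no cocycle on $G$ is nondegenerate (Definition~\ref{def:altdef}); moreover $u_{a^2}$ is a nontrivial central element of $\C^fG$, so $\C^fG$ is never simple and $G$ is not of central type. As indicated, the main obstacle is the step $[u_{a^2},u_b]=1$.
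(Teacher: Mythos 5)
Your proof is correct, but the key step is handled by a genuinely different mechanism than in the paper. You identify $G\cong Q_8\times C_2$ and dispose of the commutator $[u_{a^2},u_b]$ by invoking the triviality of the Schur multiplier of $Q_8$, so that the restriction of $f$ to $\langle a,b\rangle$ is a coboundary and $\alpha_f(a^2,b)=1$ follows from cohomological invariance of the form $\alpha_f$. The paper instead stays entirely inside $\C^fG$ and makes the relation $a^2=b^2$ do the work: writing $u_bu_a^{-1}u_b^{-1}=\gamma u_a$, it computes $u_bu_a^{-2}u_b^{-1}=\gamma^2u_a^2$ while the same element equals $u_bu_b^{-2}u_b^{-1}=u_b^{-2}=u_a^{\pm 2}=u_a^2$ (using $u_a^4=u_b^4=1$), forcing $\gamma^2=1$ and hence $[u_a^2,u_b]=1$. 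This means your parenthetical remark is not quite right: the relations $u_a^2=\pm u_b^2$ and $bab^{-1}=a^{-1}$ \emph{do} by themselves force $\delta^2=1$, so the cohomological input $H^2(Q_8,\C^*)=1$ is not unavoidable --- indeed the paper's computation is in essence an elementary proof of that vanishing on the antisymmetric form. The trade-off is the usual one: your route is shorter if one is willing to quote the Schur multiplier of $Q_8$ as known, while the paper's route is self-contained and uses only the normalization $u_g^{\circ(g)}=1$ together with Lemma~\ref{lemma:commupto}. Your treatment of $[u_{a^2},u_a]$ and $[u_{a^2},u_c]$ coincides with the paper's, and the concluding deduction that $G$ is not of central type is the same.
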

\begin{proof}
Let $f\in Z^2(G,\mathbb{C}^*)$ and assume that $[u_a,u_c]=\delta$.
By Lemma~\ref{lemma:commupto}, $\delta  \in \{1,-1\}$.
In particular, 
\begin{equation}\label{eq:2group11}
[u_{a^2},u_c]=1. 
\end{equation}
Hence, in order to show that $a^2$ is $f$-regular, we need to show that $[u_{a^2},u_b]=1$.\\
Since $[a,b]=a^2$, there exists a scalar $\gamma$ such that
$$[u_a,u_b]=\gamma u_a^2,$$
or equivalently, $u_bu_a^{-1}u_b^{-1}=\gamma u_a$.
Therefore,
\begin{equation}
u_a^2=u_b^2=u_b^{-2}=u_bu_b^{-2}u_b^{-1}=u_bu_a^{-2}u_b^{-1}=(u_bu_a^{-1}u_b^{-1})^2=\gamma ^2u_a^2. 
\end{equation}
Consequently, $\gamma  \in \{1,-1\}$.
Hence, 
\begin{equation}\label{eq:group1122}
[u_a^2,u_b]=u_a^2(u_bu_a^{-2}u_b^{-1})= u_a^2(u_bu_a^{-1}u_b^{-1}u_b u_a^{-1}u_b^{-1})=u_a^2(u_bu_a^{-1}u_b^{-1})^2=1.
\end{equation}
Therefore, by~\eqref{eq:2group11} and~\eqref{eq:group1122}, $a^2$ is $f$-regular and hence $f$ is not nondegenerate.
\end{proof}
Finally, the groups $G_{(\text{xii})},G_{(\text{xiii})},G_{(\text{xiv})}$ admit a cyclic subgroup of order $8$ and hence by
Lemma~\ref{lemma':toobycyc} these groups are not of central type..

This completes the proof of Theorem 2.
\section{The semi-center of simple twisted group algebra}\label{scsc}
We use the fact that the partition of a
group $G$ to conjugacy classes refines the partition of $G$ to
cosets of the commutator $G^{\shortmid}$.
Let $f\in Z^2(G,\mathbb{C}^*)$ be a nondegenerate cocycle. By~\eqref{eq:scsimp}, for every
$\lambda \in \hat{G}$, dim$((\mathbb{C}^fG) _{\lambda})=1$. Hence,
the support of any element in $(\mathbb{C}^fG) _{\lambda}$ is a
unique conjugacy class of $G$, say $[x]$.
Define the following group homomorphism.
\begin{equation}\label{eq:longer}
\begin{aligned}
\varphi=\varphi(f) :\hat{G} & \rightarrow G/G^{\shortmid}\\
\lambda & \mapsto xG^{\shortmid}.
\end{aligned}
\end{equation}
Let $\lambda _0\in \hat{G}$.
By \cite[(25)]{ginosar2012semi} it can easily be verified that $\lambda _0(x)=1$ for every $x\in \Gamma(f)$ (see Corollary~\ref{th:theorem1}) if and only if $\lambda _0\in$ker$(\varphi)$.
Therefore,
\begin{equation}\label{eq:nimas}
\mathbb{C}^{\hat{f}}\text{ker}(\varphi)=Z(\text{Sz}(\mathbb{C}^fG)).
\end{equation}
Next, we prove Theorem 3.
We show that if $\C ^fG$ is a simple twisted group algebra and the restriction of $f$ to $G^{\shortmid}$, which we denote by $f^{\shortmid}$, is also
nondegenerate then the semi-center of $\C ^fG$ is simple.
In particular, if $G^{\shortmid}$ is a Hall subgroup of
$G$, then the semi-center of $\C ^fG$ is simple.\\
\centerline{\textbf{Proof of Theorem 3.}}
First, we show that if there are no non-trivial $(\lambda,f)$-regular conjugacy classes contained in $G^{\shortmid}$
then the semi-center of $\mathbb{C}^fG$ is simple.
Notice that in this case, the associated kernel of $\varphi$ (see~\eqref{eq:longer}) is trivial. Hence, by~\eqref{eq:nimas} we conclude
that the center of Sz$(\mathbb{C}^fG)$ is one dimensional
and as a consequence the semi-center of $(\mathbb{C}^fG)$ is simple.

Next, we show that there are no non-trivial $(\lambda,f)$-regular conjugacy class contained in $G^{\shortmid}$.
Assume that $[g]\subseteq G^{\shortmid}$ is a $(\lambda,f)$-regular conjugacy class.
Let $x\in C_{G^{\shortmid}}(g)$, in particular $x\in G^{\shortmid}$. Therefore,
$\delta (x)=1$ for any $\delta \in \hat{G}$. Since $g$ is a $(\lambda,f)$-regular element in $G$,
$$\alpha _f(g,x)=\alpha _{f^{\shortmid}}(g,x)=\lambda (x)=1.$$
Hence, $g$ is an $f^{\shortmid}$-regular element.
By the nondegeneracy of $f^{\shortmid}$ we conclude that $g=1$. Therefore,
there are no non-trivial $(\lambda,f)$-regular conjugacy class contained in $G^{\shortmid}$.
Hence, by the first part the semi-center of  is simple.
By \cite[Corollary 4]{DeMeyer} the restriction of a nondegenerate cocycle to any Hall subgroup is also nondegenerate.
Hence, by the first part, if $G^{\shortmid}$ is a Hall subgroup of
$G$, then the semi-center of $\C ^fG$ is simple.\qed
\begin{remark}\label{remark:sf}
Let $G$ be a group of central type of order $n^2$ where $n$ is square-free number.
In \cite{ginosargradings} we show that in this case $G^{\shortmid}$ is a Hall subgroup of
$G$. Hence, by Theorem 3 the semi-center of $\C ^fG$ is simple.
\end{remark}
Corollaries~\ref{cor:Htriv},~\ref{cor:Hall} deal with two extremes
of ker$(\varphi)$ (see~\eqref{eq:longer}). Either ker$(\varphi)$ is trivial or ker$(\varphi)=\hat{G}$.
These extremes correspond to the two extremes presented in Problem 1
for nondegenerate cocycles.
\begin{corollary}\label{cor:Htriv}
Let $f\in Z^2(G,\mathbb{C}^*)$ be a nondegenerate cocycle. With the above notations the following are equivalent.
\begin{enumerate}
\item ker$(\varphi)$ is trivial. In other words, there are no non-trivial $(\lambda,f)$-regular conjugacy
classes contained in $G^{\shortmid}$.
\item $\varphi$ is an isomorphism.
\item Sz$(\mathbb{C}^fG)=\mathbb{C}^{\hat{f}}\hat{G}$ is simple. In other words, $\hat{f}\in Z^2(\hat{G},\mathbb{C}^*)$
is a nondegenerate cocycle, and in particular, $\hat{G}$ is a group of central type.
\end{enumerate}
\end{corollary}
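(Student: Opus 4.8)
The plan is to establish $(1)\Leftrightarrow(2)$ and $(1)\Leftrightarrow(3)$, which together yield the full cycle. The parenthetical reformulation inside $(1)$ needs no extra work: by \cite[(25)]{ginosar2012semi} together with~\eqref{eq:nimas}, $\ker(\varphi)$ is trivial exactly when no $1\neq\lambda\in\hat G$ has its (one-dimensional) weight space $(\C^fG)_\lambda$ supported on a conjugacy class inside $G^{\shortmid}$; and by Proposition~\ref{th:weightspaces} that support is the $(\lambda,f)$-regular conjugacy class attached to $\lambda$, which is non-trivial whenever $\lambda\neq 1$, since $\{1\}$ is $(\lambda,f)$-regular only for the trivial character.

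For $(1)\Leftrightarrow(2)$, I would first record that $\varphi$ is a homomorphism between finite abelian groups of the same order. Indeed every $\lambda\in\hat G=\text{Hom}(G,\C^*)$ factors through the abelianization, so $\hat G\cong\text{Hom}(G/G^{\shortmid},\C^*)$, whence $|\hat G|=|G:G^{\shortmid}|$. A homomorphism between finite groups of equal order is injective iff surjective iff bijective, so ``$\ker(\varphi)$ is trivial'' is precisely ``$\varphi$ is an isomorphism''.

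For $(1)\Leftrightarrow(3)$, the essential input is~\eqref{eq:nimas}, which identifies $Z(\text{Sz}(\C^fG))$ with the twisted group algebra $\C^{\hat f}\ker(\varphi)$, a $\C$-algebra of dimension $|\ker(\varphi)|$. Since $f$ is nondegenerate, $\text{Sz}(\C^fG)=\C^{\hat f}\hat G$ by~\eqref{eq:scsimp}; being a complex twisted group algebra it is semisimple, hence a finite direct sum of matrix algebras over $\C$, and such an algebra is simple precisely when its center is one-dimensional. Combining, $\text{Sz}(\C^fG)$ is simple $\iff Z(\text{Sz}(\C^fG))$ is one-dimensional $\iff |\ker(\varphi)|=1 \iff \ker(\varphi)=\{1\}$, which is $(1)$. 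The concluding clause of $(3)$ is then just the definition of a simple twisted group algebra: $\C^{\hat f}\hat G$ simple means $\hat f$ is nondegenerate and $\hat G$ is of central type.

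I do not anticipate any genuine obstacle — the corollary is essentially bookkeeping built on~\eqref{eq:nimas} and the equality $|\hat G|=|G:G^{\shortmid}|$. The single point that deserves care is the semisimplicity step: one should note explicitly that $\text{Sz}(\C^fG)$, as a complex twisted group algebra, is a direct sum of \emph{full} matrix algebras over $\C$, so that ``center one-dimensional'' genuinely forces ``simple'' (the implication would fail over a non-algebraically closed field). With that observation in place, the three conditions close up as stated.
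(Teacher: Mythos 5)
Your proposal is correct and follows essentially the same route as the paper: the equivalence $(1)\Leftrightarrow(2)$ from $\hat{G}\cong G/G^{\shortmid}$, and $(1)\Leftrightarrow(3)$ from equation~\eqref{eq:nimas} by noting that a complex twisted group algebra is simple precisely when its center is one-dimensional. You merely spell out a few steps the paper leaves implicit (the order count $|\hat{G}|=[G:G^{\shortmid}]$ and the semisimplicity of $\mathbb{C}^{\hat{f}}\hat{G}$), which is fine.
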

\begin{proof}
Since $\hat{G}\cong G/G^{\shortmid}$, ker$(\varphi)$ is trivial if and only if
$\varphi$ is an isomorphism. By equation~\eqref{eq:nimas},
ker$(\varphi)$ is trivial if and only if the center of
$\mathbb{C}^{\hat{f}}\hat{G}$ is one dimensional and hence
$\hat{f}$ is nondegenerate cocycle.
\end{proof}
The other extreme is giving as follows.
\begin{corollary}\label{cor:Hall}
Let $f\in Z^2(G,\mathbb{C}^*)$ be a nondegenerate cocycle. With the above notations the following are equivalent.
\begin{enumerate}
\item ker$(\varphi)=\hat{G}$. In other words, all the $(\lambda,f)$-regular conjugacy
classes are contained in $G^{\shortmid}$.
\item $\hat{f}$ is cohomologically trivial.
\item Sz$(\mathbb{C}^fG)=\mathbb{C}^{\hat{f}}\hat{G}$ is commutative.
\item With the notations of Corollary~\ref{th:theorem1},
$$ \sum _{x\in \Gamma(f)\cap G^{\shortmid}}[G:G' C_G(x)]=|\hat{G}|.$$
\end{enumerate}
Moreover, if these conditions hold then $Z(G)\leq G^{\shortmid}$.
\end{corollary}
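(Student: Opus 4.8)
The plan is to exploit that $\C^fG$ is simple (as $f$ is nondegenerate), so that~\eqref{eq:scsimp} and~\eqref{eq:nimas} are available: $\mathrm{Sz}(\C^fG)=\C^{\hat f}\hat G$ and $Z(\mathrm{Sz}(\C^fG))=\C^{\hat f}\mathrm{ker}(\varphi)$. With these, each equivalence becomes a comparison of dimensions or of antisymmetric forms. I would first reconcile the two formulations of item~(1). By the remark preceding Theorem~3 (from~\cite[(25)]{ginosar2012semi}), $\lambda_0\in\mathrm{ker}(\varphi)$ precisely when $\lambda_0(x)=1$ for every $x\in\Gamma(f)$; hence $\mathrm{ker}(\varphi)=\hat G$ means $\Gamma(f)\subseteq\bigcap_{\lambda\in\hat G}\ker\lambda=G^{\shortmid}$, and since $G^{\shortmid}$ is normal this is exactly the assertion that every $(\lambda,f)$-regular conjugacy class lies inside $G^{\shortmid}$.

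Next I would run the equivalences through~(3). For (1)$\Leftrightarrow$(3): the twisted group algebra $\C^{\hat f}\hat G$ is commutative iff it coincides with its center $\C^{\hat f}\mathrm{ker}(\varphi)$, and comparing dimensions ($|\hat G|$ against $|\mathrm{ker}(\varphi)|$, with $\mathrm{ker}(\varphi)\le\hat G$) this holds iff $\mathrm{ker}(\varphi)=\hat G$. For (2)$\Leftrightarrow$(3): since $\hat G$ is abelian, $\C^{\hat f}\hat G$ is commutative iff $\alpha_{\hat f}(\lambda,\mu)=1$ for all $\lambda,\mu\in\hat G$; as $\alpha$ depends only on the cohomology class and, for an abelian group, determines it (Section~\ref{preliminaris}), the trivial form corresponds exactly to a cohomologically trivial $\hat f$. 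For (4)$\Leftrightarrow$(1): by Corollary~\ref{th:theorem1} together with $\dim_\C\mathrm{Sz}(\C^fG)=|\hat G|$ one has $\sum_{x\in\Gamma(f)}[G:G'C_G(x)]=|\hat G|$ unconditionally; since every summand is at least $1$, the partial sum over $\Gamma(f)\cap G^{\shortmid}$ attains $|\hat G|$ iff $\Gamma(f)\setminus G^{\shortmid}=\emptyset$, that is iff $\Gamma(f)\subseteq G^{\shortmid}$, which is~(1).

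For the ``moreover'' clause, let $x\in Z(G)$. By Lemma~\ref{remark:sc} the element $u_x$ is semi-invariant, of some weight $\lambda\in\hat G$; writing out the action, for every $g\in G=C_G(x)$ we have $g(u_x)=u_gu_xu_g^{-1}=\alpha_f(g,x)u_x$, so $\lambda(g)=\alpha_f(g,x)$ on all of $C_G(x)$, which is precisely the condition that $x$ be $(\lambda,f)$-regular. Thus the singleton class $[x]=\{x\}$ is $(\lambda,f)$-regular, so by~(1) it is contained in $G^{\shortmid}$, whence $x\in G^{\shortmid}$; as $x\in Z(G)$ was arbitrary, $Z(G)\le G^{\shortmid}$.

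I do not expect a genuinely hard step here: the argument is an organised application of~\eqref{eq:scsimp},~\eqref{eq:nimas}, Corollary~\ref{th:theorem1} and Lemma~\ref{remark:sc}. The points demanding a little care are the dimension/partial-sum bookkeeping in (4)$\Leftrightarrow$(1) and the identity $\bigcap_{\lambda\in\hat G}\ker\lambda=G^{\shortmid}$ underpinning the two forms of~(1); the only input not already packaged in those cited results is the last-paragraph observation that a central element of $G$ always yields a $(\lambda,f)$-regular singleton class.
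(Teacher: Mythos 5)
Your proof is correct and follows essentially the same route as the paper: the equivalences are channelled through condition (3) using~\eqref{eq:scsimp}, \eqref{eq:nimas} and the dimension count $\dim_{\C}\mathrm{Sz}(\C^fG)=|\hat{G}|$, with (1)$\Leftrightarrow$(4) read off from Corollary~\ref{th:theorem1} and the final claim from Lemma~\ref{remark:sc}. The only local difference is in (2)$\Leftrightarrow$(3), where you argue via the antisymmetric form $\alpha_{\hat{f}}$ determining the cohomology class on the abelian group $\hat{G}$, whereas the paper deduces triviality of $[\hat{f}]$ from the fact that its order divides the dimension of every projective $\hat{f}$-representation; both are valid one-line arguments.
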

\begin{proof}
We first show that the second and the third conditions are equivalent.
Assume that $\mathbb{C}^{\hat{f}}\hat{G}$ is commutative. Then, since the order of the cohomology class $[f]\in H^2(G,\C ^*)$ divides
the dimension of each projective $f$-representation (see \cite[p.406, Remark]{GA})
$\hat{f}$ is cohomologically trivial.
On the other hand, since $\hat{G}$ is commutative, if $\hat{f}$ is cohomologically trivial, then $\mathbb{C}^{\hat{f}}\hat{G}$ is commutative.
Next, we show that the first and the third conditions are equivalent.
Since $f$ is nondegenerate, the dimension of Sz$(\mathbb{C}^fG)$ is $|\hat{G}|$.
Therefore, by~\eqref{eq:nimas}, 
Sz$(\mathbb{C}^fG)$ is commutative if and only if ker$(\varphi)=\hat{G}$.
The equivalence between the first and the fourth conditions is clear from Corollary~\ref{th:theorem1}.
Finally, since any central element is $(\lambda,f)$-regular for some $\lambda \in \hat{G}$ (see Lemma~\ref{remark:sc}), the last claim is clear
from the first condition.
\end{proof}
By Lemma~\ref{remark:sc}, nondegenerate cocycles $f\in Z^2(G,\C ^*)$ over abelian groups $G$
are natural examples for affirmative answer to Problem 1(ii). However, there exist more sophisticated examples.
\begin{corollary}\label{cor:mat}
Let $f\in H^2(G,\mathbb{C}^*)$ be a nondegenerate cocycle, assume
also
 $Z(G)\not \subseteq G^{\shortmid}$.
If $\hat{G}\cong C_p\times C_p$ then Sz$(\mathbb{C}^fG)\cong
M_p(\mathbb{C})$.
\end{corollary}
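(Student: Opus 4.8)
The plan is to recognize $\text{Sz}(\mathbb{C}^fG)$ as a twisted group algebra over the abelian group $\hat{G}\cong C_p\times C_p$ of dimension $p^2$, to rule out the commutative possibility using the hypothesis $Z(G)\not\subseteq G^{\shortmid}$, and then to observe that the only remaining semisimple algebra of dimension $p^2$ of the required form is $M_p(\mathbb{C})$.

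First I would note that, $f$ being nondegenerate, $\mathbb{C}^fG$ is simple, so by~\eqref{eq:scsimp} we have $\text{Sz}(\mathbb{C}^fG)=\mathbb{C}^{\hat{f}}\hat{G}$, a semisimple algebra of dimension $|\hat{G}|=p^2$. By the last assertion of Corollary~\ref{cor:Hall}, commutativity of $\text{Sz}(\mathbb{C}^fG)$ would force $Z(G)\le G^{\shortmid}$; since we assume $Z(G)\not\subseteq G^{\shortmid}$, the algebra $\mathbb{C}^{\hat{f}}\hat{G}$ is non-commutative, i.e.\ $\hat{f}$ is not cohomologically trivial.

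Next I would pin down the Artin--Wedderburn type of $\mathbb{C}^{\hat{f}}\hat{G}$. Since $\hat{G}$ is abelian, the antisymmetric form $\alpha_{\hat{f}}$ (see~\eqref{eq:form}) determines the class $[\hat{f}]$. The set $R$ of $\hat{f}$-regular elements of $\hat{G}$ equals the radical of $\alpha_{\hat{f}}$ (as $C_{\hat{G}}(\lambda)=\hat{G}$ for every $\lambda$), hence is a subgroup, and $\hat{G}/R$ inherits a nondegenerate antisymmetric form, so $\hat{G}/R$ is of central type and in particular $[\hat{G}:R]$ is a perfect square (one may also see this directly: $\alpha_{\hat{f}}$ takes values in $\mu_p$, hence is an alternating $\mathbb{F}_p$-form of even rank on $\hat{G}\cong\mathbb{F}_p^2$). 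Among the divisors $1,p,p^2$ of $|\hat{G}|$ the only perfect squares are $1$ and $p^2$, and the non-triviality of $\alpha_{\hat{f}}$ excludes $R=\hat{G}$; hence $R=\{1\}$, that is, $\hat{f}$ is nondegenerate in the sense of Definition~\ref{def:altdef}. Therefore $\mathbb{C}^{\hat{f}}\hat{G}$ is simple of dimension $p^2$, and Artin--Wedderburn gives $\text{Sz}(\mathbb{C}^fG)\cong M_p(\mathbb{C})$.

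The only step needing genuine care is the dichotomy for the form in the previous paragraph: one must know that an antisymmetric form on $C_p\times C_p$ is either trivial or nondegenerate --- equivalently, that $\hat{G}/R$ has square order --- which is precisely what rules out a mixed decomposition $M_{n_1}(\mathbb{C})\oplus\cdots$ with $\sum_i n_i^2=p^2$ and forces the single block $M_p(\mathbb{C})$. Everything else is a direct appeal to~\eqref{eq:scsimp}, Corollary~\ref{cor:Hall}, and Definition~\ref{def:altdef}.
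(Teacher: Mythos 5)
Your proof is correct and follows essentially the same route as the paper: identify $\text{Sz}(\mathbb{C}^fG)=\mathbb{C}^{\hat f}\hat G$ via~\eqref{eq:scsimp}, use Corollary~\ref{cor:Hall} to rule out commutativity (hence triviality of $[\hat f]$), and invoke the dichotomy that a class on $C_p\times C_p$ is either trivial or nondegenerate. The only difference is that the paper cites \cite[\S 2]{ginosargradings} for that dichotomy, whereas you prove it directly via the radical of the alternating form --- a harmless and in fact welcome elaboration.
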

\begin{proof}
By~\eqref{eq:scsimp},
$$\text{Sz}(\mathbb{C}^fG)=\mathbb{C}^{\hat{f}}(C_p\times C_p).$$
Now, by Corollary~\ref{cor:Hall}, since $Z(G)\not \subseteq G^{\shortmid}$, $\mathbb{C}^{\hat{f}}(C_p\times C_p)$ is non-commutative and hence
by Corollary~\ref{cor:Hall}  $[\hat{f}]$ is non-trivial.
Since any cohomology class of $C_p\times C_p$ is either trivial or nondegenerate (see \cite[\S 2]{ginosargradings})
we get that $\hat{f}$ is nondegenerate. Consequently, $\mathbb{C}^{\hat{f}}(C_p\times C_p)$ is simple.
\end{proof}
In Theorem 5, the idea of the construction of the cohomology class $[f]$ is based on the following lemma.
\begin{lemma}\label{lemma:capG}
Let $f$ be nondegenerate cocycle and denote by $f^{\shortmid}$ the restriction of $f$ to $G^{\shortmid}$. Then
$$ Sz(\mathbb{C}^fG)\cap \mathbb{C}^{f^{\shortmid}}G^{\shortmid}\subseteq Z(\mathbb{C}^{f^{\shortmid}}G^{\shortmid}).$$
\end{lemma}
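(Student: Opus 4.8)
The plan is to combine the weight-space grading of the semi-center with the elementary fact that every $1$-dimensional character of $G$ is trivial on the commutator subgroup $G^{\shortmid}$. First I would take an arbitrary element $w\in \text{Sz}(\C^fG)\cap \C^{f^{\shortmid}}G^{\shortmid}$ and, using the grading $\text{Sz}(\C^fG)=\bigoplus_{\lambda\in\hat{G}}(\C^fG)_{\lambda}$, write $w=\sum_{\lambda\in\hat{G}}w_{\lambda}$ with each $w_{\lambda}$ a semi-invariant element of weight $\lambda$, so that $g(w_{\lambda})=\lambda(g)\,w_{\lambda}$ for every $g\in G$, where the $G$-action is the conjugation action~\eqref{eq:conma}.

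Next I would apply this action by elements of $G^{\shortmid}$. If $g\in G^{\shortmid}$ then, since $\lambda\colon G\to\C^*$ is a homomorphism into an abelian group, $\lambda$ is trivial on $G^{\shortmid}$, hence $\lambda(g)=1$; consequently
$$u_g w u_g^{-1}=g(w)=\sum_{\lambda\in\hat{G}}g(w_{\lambda})=\sum_{\lambda\in\hat{G}}\lambda(g)\,w_{\lambda}=\sum_{\lambda\in\hat{G}}w_{\lambda}=w,$$
so $w$ commutes with $u_g$ for every $g\in G^{\shortmid}$. Finally, since $\{u_g\}_{g\in G^{\shortmid}}$ is a $\C$-basis of the subalgebra $\C^{f^{\shortmid}}G^{\shortmid}$ of $\C^fG$ and $w$ lies in this subalgebra by hypothesis, the previous step forces $w$ to commute with all of $\C^{f^{\shortmid}}G^{\shortmid}$, i.e. $w\in Z(\C^{f^{\shortmid}}G^{\shortmid})$, which is the claimed inclusion.

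I do not expect a genuine obstacle here: the only conceptual ingredient is that weights kill $G^{\shortmid}$, and in particular there is no need to decompose $w$ into $G^{\shortmid}$-components or to invoke Proposition~\ref{th:weightspaces}. It is worth noting that nondegeneracy of $f$ plays no role in this argument; it is merely the context in which the lemma is used, where by~\eqref{eq:scsimp} the semi-center is the twisted group algebra $\C^{\hat{f}}\hat{G}$.
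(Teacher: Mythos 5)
Your proof is correct, and it takes a genuinely different (and more elementary) route than the paper. The paper invokes Proposition~\ref{th:weightspaces}: it reduces the lemma to showing that every $(\lambda,f)$-regular element $x\in G^{\shortmid}$ is automatically $f^{\shortmid}$-regular, which follows because for $g\in C_{G^{\shortmid}}(x)$ one has $\alpha_f(g,x)=\lambda(g)=1$. Your argument bypasses the $(\lambda,f)$-regularity machinery entirely and uses only the weight grading $\text{Sz}(\C^fG)=\oplus_{\lambda}(\C^fG)_{\lambda}$ together with the fact that every $\lambda\in\hat{G}$ kills $G^{\shortmid}$; in effect you prove the slightly stronger statement that all of $\text{Sz}(\C^fG)$ is centralized by $\{u_g\}_{g\in G^{\shortmid}}$, and then intersect with $\C^{f^{\shortmid}}G^{\shortmid}$. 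What the paper's route buys is an explicit description of \emph{which} basis elements of the semi-center lie in $\C^{f^{\shortmid}}G^{\shortmid}$ (the $S_{(\lambda,x)}$ with $x\in G^{\shortmid}$), together with the transfer of regularity from $(\lambda,f)$ to $f^{\shortmid}$ -- information that is reused later, e.g.\ in~\eqref{eq:lastornot123} and Corollary~\ref{lemma:smallcom}, where $G^{\shortmid}$-conjugacy classes are counted. Your route buys brevity and generality: it makes transparent that the statement is purely a consequence of the grading and needs neither the regularity formalism nor, as you correctly observe (and as is equally true of the paper's proof), the nondegeneracy hypothesis on $f$.
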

\begin{proof}
By Proposition~\ref{th:weightspaces}, it is enough to show that if $x\in G^{\shortmid}$ is a $(\lambda,f)$-regular element for some
$\lambda \in \hat{G}$, then it is also $f^{\shortmid}$-regular.
Assume that $x\in G^{\shortmid}$ is a $(\lambda,f)$-regular element.
Let $g\in C_{G^{\shortmid}}(x)$, in particular $g\in G^{\shortmid}$. Therefore,
$\delta (g)=1$ for any $\delta \in \hat{G}$. Since $x$ is $(\lambda,f)$-regular,
$$[u_x,u_g]=\lambda (x)=1.$$
Consequently $x$ is $f^{\shortmid}$-regular.
\end{proof}
Note that for any normal subgroup $N$ of a group $G$, a $G$-conjugacy class splits into $[G:NC_G(g)]$, $N$-conjugacy classes.
Let $x\in G^{\shortmid}$ be a $(\lambda ,f)$-regular element. Then, the $G$-conjugacy class $[x]$ splits into $[G:G^{\shortmid}C_G(x)]$ many $G^{\shortmid}$-conjugacy classes.
Consequently, with the notations of Corollary~\ref{th:theorem1},
\begin{equation}\label{eq:lastornot123}
\left| \bigcup _{\lambda \in \hat{G}} \{ [x]_{G^{\shortmid}}| x \text{ is }(\lambda ,f)\text{-regular}    \} \right| =\sum _{x\in \Gamma(f)\cap G^{\shortmid}}[G:G' C_G(x)].
\end{equation}
From~\eqref{eq:lastornot123} and Corollary~\ref{cor:Hall} we get
\begin{corollary}\label{lemma:smallcom}
Let $f\in Z^2(G,\mathbb{C}^*)$ be a nondegenerate cocycle. If the number of $G^{\shortmid}$ conjugacy classes (in $G^{\shortmid}$) is smaller than
the cardinality of $\hat{G}$, then Sz$(\mathbb{C}^fG)$ is not commutative.
In particular, if Sz$(\mathbb{C}^fG)$ is commutative, then $|G^{\shortmid}|\geq |G/G^{\shortmid}|$.
\end{corollary}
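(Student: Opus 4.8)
The plan is to read the corollary off directly from equation~\eqref{eq:lastornot123} together with the equivalence of conditions (3) and (4) in Corollary~\ref{cor:Hall}. First I would observe that the set
$$\bigcup _{\lambda \in \hat{G}} \{ [x]_{G^{\shortmid}} \mid x \text{ is } (\lambda,f)\text{-regular} \}$$
consists of $G^{\shortmid}$-conjugacy classes of elements lying in $G^{\shortmid}$ (each representative $x$ is chosen inside $G^{\shortmid}$), so its cardinality is bounded above by the number of conjugacy classes of the group $G^{\shortmid}$ itself. Combining this bound with \eqref{eq:lastornot123} shows that $\sum _{x\in \Gamma(f)\cap G^{\shortmid}}[G:G' C_G(x)]$ is at most the number of conjugacy classes of $G^{\shortmid}$. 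Hence, if the latter is strictly smaller than $|\hat{G}|$, this sum is $\neq |\hat{G}|$, and since $f$ is nondegenerate the equivalence $(3)\Leftrightarrow(4)$ of Corollary~\ref{cor:Hall} gives that $\mathrm{Sz}(\mathbb{C}^fG)$ is not commutative.

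For the ``in particular'' clause I would argue contrapositively. Suppose $\mathrm{Sz}(\mathbb{C}^fG)$ is commutative. By the first part the number of conjugacy classes of $G^{\shortmid}$ is at least $|\hat{G}|$, and $|\hat{G}| = |G/G^{\shortmid}|$ since $\mathbb{C}^*$ contains roots of unity of every order. As the number of conjugacy classes of any finite group $H$ never exceeds $|H|$, this forces $|G^{\shortmid}| \ge |G/G^{\shortmid}|$.

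The proof is essentially bookkeeping built on the tools already assembled, so I do not expect any serious obstacle; the only point that needs a moment's care is verifying that the displayed union is genuinely indexed by $G^{\shortmid}$-conjugacy classes contained in $G^{\shortmid}$ --- which is exactly what the preceding remark on the splitting of $G$-conjugacy classes into $N$-conjugacy classes, together with Lemma~\ref{lemma:capG} and equation~\eqref{eq:lastornot123}, provide --- after which the chain of inequalities is immediate.
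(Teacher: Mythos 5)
Your proposal is correct and follows the same route the paper intends: combining \eqref{eq:lastornot123} with the equivalence of conditions (3) and (4) in Corollary~\ref{cor:Hall}, plus the trivial bound that a group has at most as many conjugacy classes as elements. The paper gives no further detail, so your write-up simply makes explicit the bookkeeping it leaves implicit.
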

In the next section we will deal with the semi-center of simple twisted group algebras $\mathbb{C}^fG$ where $G$ is a $p$-group for a prime number $p$.
We will use the following proposition.
\begin{proposition}\label{prop:eq}
Let $f\in Z^2(G,\mathbb{C})$ be a nondegenerate cocycle and assume\\
that Sz$(\mathbb{C}^fG)$ is commutative.
Then, $|Z(G)|\leq \sqrt{|G|}$. In the specific case where $|Z(G)|= \sqrt{|G|}$, then $G^{\shortmid}=Z(G)\cong\hat{G}$.
\end{proposition}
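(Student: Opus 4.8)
The plan is to deduce the bound from two facts about a nondegenerate cocycle with commutative semi-center, each already available in the excerpt: (a) $Z(G)\le G^{\shortmid}$, and (b) $Z(G)$ embeds into $\hat G$. Fact (a) is precisely the last assertion of Corollary~\ref{cor:Hall}, whose hypotheses are satisfied because ``$\text{Sz}(\mathbb{C}^fG)$ is commutative'' is one of the equivalent conditions listed there. Fact (b) is \cite[Theorem C]{ginosar2012semi}. Since $\hat G\cong G/G^{\shortmid}$, fact (a) gives $|Z(G)|\le|G^{\shortmid}|$ and fact (b) gives $|Z(G)|\le|G/G^{\shortmid}|$.

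First I would multiply these two inequalities to obtain $|Z(G)|^2\le|G^{\shortmid}|\cdot|G/G^{\shortmid}|=|G|$, hence $|Z(G)|\le\sqrt{|G|}$ (recall that $|G|$ is a perfect square, since $f$ is nondegenerate). This settles the first claim.

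Next, for the equality case, I would observe that $|Z(G)|=\sqrt{|G|}$ forces both of the above inequalities to be equalities. From $|Z(G)|=|G^{\shortmid}|$ together with the inclusion $Z(G)\le G^{\shortmid}$ one gets $Z(G)=G^{\shortmid}$. From $|Z(G)|=|G/G^{\shortmid}|=|\hat G|$ together with the injectivity of the embedding $Z(G)\hookrightarrow\hat G$ one gets that this embedding is surjective, hence an isomorphism $Z(G)\cong\hat G$. Combining these yields $G^{\shortmid}=Z(G)\cong\hat G$, as required.

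I do not expect a genuine obstacle here; the argument is essentially a counting one once facts (a) and (b) are invoked. The only points that require care are that the map $Z(G)\to\hat G$ of \cite[Theorem C]{ginosar2012semi} is an honest monomorphism (so that equality of cardinalities upgrades it to an isomorphism), and that commutativity of the semi-center is exactly what licenses the use of the ``Moreover'' clause of Corollary~\ref{cor:Hall}. As an alternative route to the inequality $|Z(G)|\le\sqrt{|G|}$ alone, one could note that each $u_x$ with $x\in Z(G)$ is semi-invariant by Lemma~\ref{remark:sc}, so commutativity of $\text{Sz}(\mathbb{C}^fG)$ forces $f|_{Z(G)}$ to be cohomologically trivial, and then apply \cite[Proposition 1.2]{MR3210715}; but the argument above is shorter and also delivers the equality statement.
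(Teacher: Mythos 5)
Your argument is correct and is essentially identical to the paper's own proof: both deduce $|Z(G)|\le|G^{\shortmid}|$ from the ``Moreover'' clause of Corollary~\ref{cor:Hall} and $|Z(G)|\le|\hat G|=|G|/|G^{\shortmid}|$ from \cite[Theorem C]{ginosar2012semi}, multiply the two bounds, and then handle the equality case by forcing both inequalities to be equalities. No differences worth noting.
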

\begin{proof}
By Corollary~\ref{cor:Hall}, since Sz$(\mathbb{C}^fG)$ is commutative then $Z(G)\subseteq G^{\shortmid}$. In particular, 
\begin{equation}\label{eq:smallcenter1}
|Z(G)|\leq |G^{\shortmid}|.
\end{equation}
On the other hand, by \cite[Theorem C]{ginosar2012semi}, $Z(G)$ embeds into $\hat{G}$ and therefore 
\begin{equation}\label{eq:smallcenter2}
|Z(G)|\leq |\hat{G}|=\frac{|G|}{|G^{\shortmid}|}.
\end{equation}
From~\eqref{eq:smallcenter1} and~\eqref{eq:smallcenter2} we get $|Z(G)|\leq \sqrt{|G|}$.
In the case where $|Z(G)|= \sqrt{|G|}$ we get equalities in both~\eqref{eq:smallcenter1} and~\eqref{eq:smallcenter2}.
Since $Z(G)$ and $G^{\shortmid}$ are of the same order and $Z(G) \subseteq G^{\shortmid}$ we have $G^{\shortmid}=Z(G)$. Since $Z(G)$ embeds into $\hat{G}$
and they have the same order we have $Z(G)\cong\hat{G}$.
\end{proof}
\section{Semi-center of simple twisted group algebras over $p$-groups.}
We start this section by proving that the semi-center of the simple twisted group algebras presented in Theorem~\ref{th:existp41}, which correspond to the groups $G_{(\text{xv})}$, is commutative.
Then we will show that $G_{(\text{xv})}$ is unique with this properties. In other words, we will show that if $G$ is a group of order $p^4$, where $p$ now is any prime, and if there exists a 
nondegenerate cocycle $f\in Z^2(G,\mathbb{C}^*)$ such that 
Sz$(\mathbb{C}^fG)$ is commutative, then $G\cong G_{(\text{xv})}$. In particular, there is no group $G$ of order $16$ which admits a 
nondegenerate cocycle $f\in Z^2(G,\mathbb{C}^*)$ such that 
Sz$(\mathbb{C}^fG)$ is commutative.

\begin{theorem}\label{th:existp4}
Let $G:=G_{(\text{xv})}$ for primes $p\geq 5$ and let $H:=G_{(\text{xv})}$ for $p=3$. The twisted group algebras which correspond to the following cohomology
classes $[f]\in H^2(G,\mathbb{C}^*)$ and $[\beta ]\in  H^2(H,\mathbb{C}^*)$ admits a commutative semi-center.
$$[f]:u_a^p=u_b^p=u_c^p=u_d^p=1  [u_a,u_b]=[u_a,u_c]=1,[u_d,u_a]=\zeta _p,[u_c,u_b]=\zeta _p, [u_d,u_b]=u_a,[u_d,u_c]=u_b.$$
$$[\beta ]:u_a^9=u_b^3=u_c^3=1,[u_a,u_b]=\zeta _3, [u_c,u_a]=\zeta _9u_b,[u_c,u_b]=u_a^6.$$
\end{theorem}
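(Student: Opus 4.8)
The plan is to read off commutativity from Corollary~\ref{cor:Hall}. The cocycles $[f]$ and $[\beta]$ were shown to be nondegenerate in Theorem~\ref{th:existp41}, so $\mathbb{C}^fG$ and $\mathbb{C}^\beta H$ are simple and, by~\eqref{eq:scsimp}, their semi-centers are again twisted group algebras over $\hat{G}$ and $\hat{H}$. By the equivalence of conditions (1) and (3) in Corollary~\ref{cor:Hall}, it suffices to show that every $(\lambda,f)$-regular conjugacy class is contained in $G^{\shortmid}$ (respectively, every $(\lambda,\beta)$-regular class in $H^{\shortmid}$); since $G^{\shortmid}$ is normal this is the same as showing that each $(\lambda,f)$-regular element lies in $G^{\shortmid}$.

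The reduction to a short computation is this: if $x$ is $(\lambda,f)$-regular and $g\in C_G(x)\cap G^{\shortmid}$, then $\lambda(g)=1$, so $(\lambda,f)$-regularity forces $\alpha_f(g,x)=1$, i.e.\ $u_gu_x=u_xu_g$ in $\mathbb{C}^fG$. Hence it is enough to exhibit, for every $x\notin G^{\shortmid}$, one witness $g\in C_G(x)\cap G^{\shortmid}$ with $u_gu_x\neq u_xu_g$. I will use that for a fixed $g$ the map $h\mapsto\alpha_f(g,h)$ is a homomorphism on $C_G(g)$, so $[u_g,u_x]$ is computed by evaluating $\alpha_f(g,\cdot)$ on the generators occurring in $x$; these are exactly the commutator computations already carried out in the proof of Theorem~\ref{th:existp41}, and the only new point is that the witnesses one needs happen to lie in the commutator subgroup.

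For $p\ge 5$ one has $G^{\shortmid}=\langle a,b\rangle$. Writing $x=a^{r_1}b^{r_2}c^{r_3}d^{r_4}$, the central element $a\in G^{\shortmid}$ satisfies $[u_a,u_x]=\zeta_p^{-r_4}$, and when $r_4=0$ the element $b\in G^{\shortmid}$ centralizes $x$ with $[u_b,u_x]=\zeta_p^{-r_3}$ (both from the relations defining $[f]$). So if $r_4\not\equiv 0\ (\mathrm{mod}\ p)$ then $a$ is a witness, and if $r_4\equiv 0$ but $r_3\not\equiv 0\ (\mathrm{mod}\ p)$ then $b$ is a witness; hence a $(\lambda,f)$-regular $x$ has $r_3\equiv r_4\equiv 0\ (\mathrm{mod}\ p)$ and lies in $\langle a,b\rangle=G^{\shortmid}$. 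For $p=3$ one has $H^{\shortmid}=\langle a^3,b\rangle$ with $a^3\in Z(H)$. Writing $x=a^ib^jc^k$, the element $b\in H^{\shortmid}$ centralizes $a^ib^j$ and $[u_b,u_{a^ib^j}]=\zeta_3^{-i}$, while the central element $a^3\in H^{\shortmid}$ satisfies $u_cu_a^3u_c^{-1}=\zeta_9^3u_a^3$, which is obtained by cubing the relation $u_cu_au_c^{-1}=\zeta_9u_bu_a$ and using that $(u_bu_a)^3=u_a^3u_b^3=u_a^3$ in $\mathbb{C}^\beta\langle a,b\rangle$; hence $[u_a^3,u_x]=\zeta_9^{-3k}$, and $\zeta_9^3$ is a primitive cube root of unity. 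Thus $a^3$ is a witness when $k\not\equiv 0\ (\mathrm{mod}\ 3)$ and $b$ is a witness when $k\equiv 0$ but $i\not\equiv 0\ (\mathrm{mod}\ 3)$; a $(\lambda,\beta)$-regular $x$ therefore satisfies $k\equiv 0$ and $3\mid i$, i.e.\ $x\in\langle a^3,b\rangle=H^{\shortmid}$. By Corollary~\ref{cor:Hall} both $\mathrm{Sz}(\mathbb{C}^fG)$ and $\mathrm{Sz}(\mathbb{C}^\beta H)$ are commutative.

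The one genuinely non-routine ingredient is the $p=3$ identity $u_cu_a^3u_c^{-1}=\zeta_9^3u_a^3$: since $a$ and $c$ do not commute in $H$ it is not visible from the defining relations and must be obtained by cubing $u_cu_au_c^{-1}=\zeta_9u_bu_a$ and simplifying the product $(u_bu_a)^3$ inside the twisted group algebra of $\langle a,b\rangle$ using $[u_a,u_b]=\zeta_3$ and $u_b^3=1$. Everything else is bookkeeping: identifying the witnesses, observing that they lie in the commutator subgroup, and reusing the commutator evaluations already present in the proof of Theorem~\ref{th:existp41}.
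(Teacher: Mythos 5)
Your proof is correct, and it rests on the same pivot as the paper's: Corollary~\ref{cor:Hall} reduces commutativity of the semi-center to the statement that every $(\lambda,f)$-regular conjugacy class lies in $G^{\shortmid}$. The difference is in how that condition is verified. The paper argues in the positive direction: it exhibits, for each of the two generators $\lambda_c,\lambda_d$ of $\hat{G}$ (resp.\ $\lambda_a,\lambda_c$ of $\hat{H}$), an explicit $(\lambda,f)$-regular element ($b$ and $a$, resp.\ $b$ and $a^3$, all lying in the commutator subgroup), and then uses the fact that $\varphi$ of~\eqref{eq:longer} is a homomorphism to conclude that $\varphi$ is trivial. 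You argue in the negative direction: for every $x\notin G^{\shortmid}$ you produce a witness $g\in C_G(x)\cap G^{\shortmid}$ with $\alpha_f(g,x)\neq 1$, which rules out $(\lambda,f)$-regularity for every $\lambda$ at once since $\lambda(g)=1$. Your route is slightly longer in bookkeeping but more self-contained: it does not lean on the homomorphism property of $\varphi$ or on the existence and uniqueness of a regular class in each weight, and your explicit derivation of $u_cu_a^3u_c^{-1}=\zeta_9^3u_a^3$ via $(u_bu_a)^3=u_a^3$ in $\mathbb{C}^{\beta}\langle a,b\rangle$ supplies a computation that the paper uses ($[u_{a^3},u_c]=\zeta_3$) but does not write out. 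The commutator evaluations involved are the same ones appearing in the nondegeneracy check of Theorem~\ref{th:existp41}, so the two arguments are computationally equivalent; each buys a marginally different economy.
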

\begin{proof}
In the proof of Theorem~\ref{th:existp41} it was shown that $[f]$ and $[\beta ]$ are indeed cohomology classes.
We start by showing that Sz$(\mathbb{C}^fG)$ is commutative.
Since $G^{\shortmid}=\langle a,b \rangle$ we can identify
$$\hat{G}\cong C_p\times C_p=\langle \lambda _{c} \rangle \times \langle \lambda _{d} \rangle.$$
Here,
$$\lambda _c(a)=\lambda _c(b)=\lambda _c(d)=1 ,\lambda _c(c)=\zeta _p,$$
and
$$\lambda _d(a)=\lambda _d(b)=\lambda _d(c)=1, \lambda _d(d)=\zeta _p,$$
where $\zeta _p$ is a primitive $p$-th root of unity.
Since $a$ is central and
$$[u_a,u_c]=[u_a,u_b]=1,[u_d,u_a]=\zeta _p,$$
we get that $a$ is $(\lambda _d,f)$-regular.\\
Similarly, since $C_G(b)=\langle a,b,c \rangle$ and
$$[u_a,u_b]=1,[u_c,u_b]=\zeta _p,$$
we get that $b$ is $(\lambda _c,f)$-regular.
Consequently, with the notations of~\eqref{eq:longer},
$$\varphi (\lambda _c)=bG^{\shortmid}=G^{\shortmid} \text{ and } \varphi (\lambda _d)=aG^{\shortmid}=G^{\shortmid}.$$
Therefore, $\varphi$ is trivial and hence by Corollary~\ref{cor:Hall}, Sz$(\mathbb{C}^fG)$ is commutative. 

Next, we show that Sz$(\mathbb{C}^{\beta}H)$ is commutative.
Since $H^{\shortmid}=\langle a^3,b \rangle$ we can identify
$$\hat{H}\cong C_p\times C_p=\langle \lambda _{a} \rangle \times \langle \lambda _{c} \rangle.$$
Here,
$$\lambda _a(b)=\lambda _a(c)=1 ,\lambda _a(a)=\zeta _3,$$
and
$$\lambda _c(a)=\lambda _c(b)=1, \lambda _c(c)=\zeta _3,$$
where $\zeta _3$ is a root of unity of order $3$.
Since $a^3$ is central and
$$[u_{a^3},u_b]=1,[u_{a^3},u_c]=\zeta _3,$$
we get that $a^3$ is $(\lambda _c,\beta)$-regular.\\
Similarly, since $C_G(b)=\langle a,b \rangle$ and
$$[u_a,u_b]=\zeta _3,$$
we get that $b$ is $(\lambda _a,\beta)$-regular.
Consequently, with the notations of~\eqref{eq:longer},
$$\varphi (\lambda _a)=bG^{\shortmid}=G^{\shortmid} \text{ and } \varphi (\lambda _c)=a^3G^{\shortmid}=G^{\shortmid}.$$
Therefore, $\varphi$ is trivial and hence by Corollary~\ref{cor:Hall}, Sz$(\mathbb{C}^fG)$ is commutative. 
\end{proof}
Notice that $Z(G_{(\text{xv})})$ is a proper subgroup of $G_{(\text{xv})}^{\shortmid}$ of cardinality $p$.
It turns out that this is a necessary condition on a group $G$ of order $p^4$ in order to admit a nondegenerate cocycle $f\in Z^2(G,\mathbb{C})$ such that Sz$(\mathbb{C}^fG)$ is commutative.
\begin{lemma}\label{lemma:centproppp}
Let $G$ be a group of order $p^4$ where $p$ is prime.
Assume that there is a nondegenerate cocycle $f\in Z^2(G,\mathbb{C})$ such that Sz$(\mathbb{C}^fG)$ is commutative.
Then $Z(G)$ is a proper subgroup of $G^{\shortmid}$ of cardinality $p$.
\end{lemma}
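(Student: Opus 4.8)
The plan is to pin down $|Z(G)|$ by bracketing it between the constraints a commutative semi-center already imposes, and then exclude the top value. Throughout, $G$ is of central type of order $p^4$ and $f$ is a nondegenerate cocycle with $\mathrm{Sz}(\mathbb{C}^fG)$ commutative.

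\emph{Step 1: bracket $|Z(G)|$.} By Corollary~\ref{cor:Hall}, $Z(G)\subseteq G^{\shortmid}$. By Corollary~\ref{lemma:smallcom}, $|G^{\shortmid}|\ge |G/G^{\shortmid}|$, so $|G^{\shortmid}|^2\ge |G|=p^4$, giving $|G^{\shortmid}|\ge p^2$. On the other hand, Proposition~\ref{prop:eq} gives $|Z(G)|\le \sqrt{|G|}=p^2$. Since $G$ is a nontrivial $p$-group, $Z(G)\neq 1$, hence $|Z(G)|\in\{p,p^2\}$. It then remains only to rule out $|Z(G)|=p^2$, since once $|Z(G)|=p$, the inclusion $Z(G)\subseteq G^{\shortmid}$ together with $p=|Z(G)|<p^2\le |G^{\shortmid}|$ immediately makes $Z(G)$ a proper subgroup of $G^{\shortmid}$ of order $p$.

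\emph{Step 2: exclude $|Z(G)|=p^2$ — this is the main obstacle.} Assume $|Z(G)|=p^2$. By the equality case of Proposition~\ref{prop:eq} we get $G^{\shortmid}=Z(G)$, so $G$ has nilpotency class at most $2$ and $G/Z(G)$ is abelian of order $p^2$. Since $|G^{\shortmid}|=p^2>1$, $G$ is non-abelian, so $G/Z(G)$ is non-cyclic and hence $G/Z(G)\cong C_p\times C_p$. Pick $x,y\in G$ whose images generate $G/Z(G)$. Because $G^{\shortmid}=Z(G)$ is central, all commutators are central, the commutator map factors through $G/Z(G)\times G/Z(G)$ and is bi-additive, whence $G^{\shortmid}=\langle [x,y]\rangle$; moreover $x^p\in Z(G)$ forces $[x,y]^p=[x^p,y]=1$. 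Thus $|G^{\shortmid}|\le p$, contradicting $|G^{\shortmid}|=p^2$. Therefore $|Z(G)|=p$.

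Combining the two steps finishes the proof: $|Z(G)|=p$ and $Z(G)\subsetneq G^{\shortmid}$. The routine parts are pure bookkeeping with Corollary~\ref{cor:Hall}, Corollary~\ref{lemma:smallcom} and Proposition~\ref{prop:eq}; the only point requiring a genuine (though short) argument is Step 2, namely that a group of order $p^4$ with $G^{\shortmid}=Z(G)$ cannot have $|Z(G)|=p^2$, which follows from the bilinearity of the commutator map on $G/Z(G)\cong C_p\times C_p$.
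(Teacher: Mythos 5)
Your proof is correct and follows essentially the same route as the paper's: Corollary~\ref{cor:Hall} and Corollary~\ref{lemma:smallcom} to force $Z(G)\subseteq G^{\shortmid}$ with $|G^{\shortmid}|\ge p^2$, Proposition~\ref{prop:eq} (including its equality case) to reduce everything to excluding $Z(G)=G^{\shortmid}$ of order $p^2$, and the bilinearity of the commutator map on $G/Z(G)\cong C_p\times C_p$ to obtain the contradiction $|G^{\shortmid}|\le p$. Your Step~2 is marginally cleaner in that it handles the two cases $Z(G)$ cyclic and $Z(G)\cong C_p\times C_p$ uniformly, where the paper treats them separately, but the underlying argument is the same.
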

\begin{proof}
The group $G$ admits a normal subgroup of order $p^i$ for any $0\leq i\leq 4$ (see \cite[Exercise 4.2]{rotman}). In particular,
$G$ admits a normal subgroup $N$ of order $p^2$. Since $|G/N|=p^2$ we conclude that $G/N$ is a abelian.
Therefore $|G^{\shortmid}|\leq p^2$. If $|G^{\shortmid}|< p^2$, then by Corollary~\ref{lemma:smallcom},
Sz$(\mathbb{C}^fG)$ is non-commutative which is a contradiction.
We are left with the case in which $|G^{\shortmid}|=p^2$. By Corollary~\ref{cor:Hall}, $Z(G)\subseteq G^{\shortmid}$. There are two possibilities. Either, $Z(G)$ is a proper
subgroup of $G^{\shortmid}$ of cardinality $p$ or $Z(G)=G^{\shortmid}$. We will show that the second possibility never happens.
Assume $Z(G)=G^{\shortmid}$. Then, by Proposition~\ref{prop:eq}, $G^{\shortmid}=Z(G)\cong\hat{G}$. Obviously, this cannot happen if $Z(G)$ cyclic.
Indeed, in this case $G/Z(G)$ is cyclic and hence the group $G$ is abelian which contradicts our assumption that the semi-center is commutative.
Assume $Z(G)\cong C_p\times C_p$.
We get a central extension 
\begin{equation}\label{eq:centex2}
1\rightarrow G^{\shortmid}\cong  C_p\times C_p \rightarrow G \xrightarrow{\phi} C_p\times C_p \rightarrow 1.
\end{equation}
Let $x,y\in G$ such that $\phi (x),\phi(y)$ generates Im$(\phi )$. Then the commutator $G^{\shortmid}$ is the cyclic subgroup of order $p$
generated by $[x,y]$. This contradicts~\eqref{eq:centex2}.
\end{proof}

We are now ready to prove the uniqueness of $G_{(\text{xv})}$ in the following sense.
\begin{proposition}\label{th:p^4}
Let $G$ be group of cardinality $p^4$ where $p$ is prime. If there exists a nondegenerate cocycle $f \in Z^2(G,\mathbb{C})$
such that Sz$(\mathbb{C}^fG)$ is commutative then $p>2$ and $G\cong G_{(\text{xv})}$.
\end{proposition}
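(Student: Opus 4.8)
The plan is to push the hypothesis through two results that are already in place: the classification of central-type groups of order $p^4$ (Theorem 1 and Theorem 2), and the structural constraint of Lemma~\ref{lemma:centproppp}. First I would note that, since a commutative semi-center $\mathrm{Sz}(\mathbb{C}^fG)$ presupposes that $f$ is nondegenerate (this is part of the hypothesis), the group $G$ must be of central type. By Theorem 1, if $p$ is odd then $G$ is isomorphic to one of $G_{(\text{iii})}$, $G_{(\text{v})}$, $G_{(\text{viii})}$, $G_{(\text{xiv})}$, $G_{(\text{xv})}$ of Table~\ref{tab:p4}; by Theorem 2, if $p=2$ then $G$ is isomorphic to one of $G_{(\text{iii})}$, $G_{(\text{v})}$, $G_{(\text{ix})}$, $G_{(\text{x})}$ of Table~\ref{tab:16}.

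Next I would apply Lemma~\ref{lemma:centproppp}: commutativity of $\mathrm{Sz}(\mathbb{C}^fG)$ forces $Z(G)$ to be a proper subgroup of $G^{\shortmid}$ of order $p$; in particular $|G^{\shortmid}|>p$, hence $|G^{\shortmid}|\geq p^2$. The rest is bookkeeping over the finitely many candidates, reading $G^{\shortmid}$ off each presentation in the tables. The abelian groups $G_{(\text{iii})}$, $G_{(\text{v})}$ have $G^{\shortmid}=1$; the group $G_{(\text{viii})}$ in the odd case, with $[a,b]=a^p$ and $a^p$ central, has $G^{\shortmid}=\langle a^p\rangle$ of order $p$; the group $G_{(\text{xiv})}$, whose only nontrivial commutator relation among generators is $[c,d]=a$, has $G^{\shortmid}=\langle a\rangle$ of order $p$ (and, redundantly, $Z(G)=\langle a,b\rangle$ of order $p^2\neq p$); and the two central-type groups of order $16$ not already in the previous list, $G_{(\text{ix})}$ (with $[a,c]=a^2$) and $G_{(\text{x})}$ (with $[a,c]=b$), both have $|G^{\shortmid}|=2$. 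In every one of these cases $|G^{\shortmid}|\leq p<p^2$, contradicting Lemma~\ref{lemma:centproppp}. Hence $G\cong G_{(\text{xv})}$; and since $G_{(\text{xv})}$ is not among the central-type groups of order $16$, we must have $p>2$.

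I do not anticipate a genuine obstacle: the only substance is the commutator-subgroup computations, each immediate from the defining relations, and nothing needs to be verified about $G_{(\text{xv})}$ itself, since the statement asserts only the ``only if'' direction (the ``if'' direction being Theorem~\ref{th:existp4}). The single point meriting a line of care is that $G_{(\text{xv})}$ has a different presentation when $p=3$ than when $p\geq 5$; there one checks directly that $G^{\shortmid}=\langle a^3,b\rangle$ still has order $p^2$ and $Z(G)=\langle a^3\rangle$ has order $p$, in agreement with the remark preceding Lemma~\ref{lemma:centproppp}, so $G_{(\text{xv})}$ survives the elimination in that case too.
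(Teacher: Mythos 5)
Your proposal is correct and follows essentially the same route as the paper: reduce to the central-type groups listed in Theorems 1 and 2, then eliminate every candidate other than $G_{(\text{xv})}$ by Lemma~\ref{lemma:centproppp}. The only difference is cosmetic — you derive the contradiction from $|G^{\shortmid}|\leq p<p^2$, while the paper phrases it as the center failing to be a proper subgroup of the commutator; both are the same application of that lemma, and your explicit commutator computations are all accurate.
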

\begin{proof}
Clearly, $G$ cannot be abelian.
Let $G$ be a group of central type which is not isomorphic to $G_{(\text{xv})}$ from Table~\ref{tab:p4}.
Then by Theorem 1, if $p$ is odd then $G$ isomorphic to either $G_{(\text{viii})}$ or $G_{(\text{xiv})}$ from Table~\ref{tab:p4},
and if $p=2$ then by Theorem 2, $G$ is isomorphic to either $G_{(\text{ix})}$  or $G_{(\text{x})}$ from Table~\ref{tab:16}.
However, in all these groups the center is not a proper subgroup of the commutator and hence by Lemma~\ref{lemma:centproppp}
there is no commutative semi-center of simple twisted group algebras over these groups.
\end{proof}
Let $\C ^fG$ be a simple twisted group algebra. By Remark~\ref{remark:sf} and by Proposition~\ref{th:p^4}, if either 
$|G|$ is cube-free or $|G|=16$, then Sz$(\mathbb{C}^fG)$ is non-commutative. 
Hence, if a group $G$ admits a nondegenerate cocycle
$f\in Z^2(G,\C ^*)$  such that Sz$(\C ^fG)$ is commutative, then $|G|\geq 64$.

Next we prove Theorem 5.
Recall that $G$ defined as follows:
\begin{equation}
G=\langle x_1,x_2,x_3,x_4,x_5,x_6 \rangle,
\end{equation}
such that $ x_1,x_2,x_3$ are central and 
$$x_i^2=1,\quad [x_4,x_5]=x_1,\quad [x_4,x_6]=x_2,\quad [x_5,x_6]=x_3.$$
We show that there exists a nondegenerate cocycle $f \in Z^2(G,\C^*)$
such that Sz$(\mathbb{C}^fG)$ is commutative.\\
\centerline{\textbf{Proof of Theorem 5.}}
The idea of the construction of $G$ is based on Proposition~\ref{prop:eq}.
Clearly,
$$Z(G)=\langle x_1,x_2,x_3 \rangle = G^{\shortmid}.$$
Therefore, 
$$\hat {G}\cong G^{\shortmid}=Z(G).$$
The idea of the construction of $[f]\in H^2(G,\C^*)$ is based on Lemma~\ref{lemma:capG}.
We wish to construct a nondegenerate cohomology class $[f]\in H^2(G,\C^*)$ such that the restriction of $[f]$ to $G^{\shortmid}$ is trivial.
Let $u_i:=u_{x_i}$. As before, we assume that $u_i^2=1$.
We use a construction of crossed product in the following way.
Start from the commutative group algebra $R_1=\C Z(G)$ and construct a $C_2=\langle x_4 \rangle$ action on $R_1$ in the following way.
\begin{align*}
\psi _4: & R_1\rightarrow R_1\\
& u_1\mapsto t_{14}u_1\\
&          u_2\mapsto t_{24}u_2\\
 &          u_3\mapsto t_{34}u_3
\end{align*}
For any $t_{ij}=\pm 1$, $\psi _4$ is an automorphism (of order $2$) of the ring $R_1$.
Hence, the ring $R_2$ generated by $u_1,u_2,u_3,u_4$ is a crossed product of $C_2$ over $R_1$.
In particular it is an associative algebra.
Consider the following $C_2=\langle x_5 \rangle$-action on $R_2$.
\begin{align*}
\psi _5: & R_2\rightarrow R_2\\
& u_1\mapsto u_1\\
&          u_2\mapsto t_{25}u_2\\
 &          u_3\mapsto t_{35}u_3\\
 &          u_4\mapsto \delta u_1u_4
\end{align*}
We show that again, for any $t_{ij},\delta=\pm 1$, $\psi _5$ is an automorphism (of order $2$) of the ring $R_2$.
Clearly, the restriction of $\psi _5$ to $R_1$ is an automorphism. We need to check if the following equality holds for any $1\leq i\leq 3$.
\begin{equation}
\psi _5(u_4 u_i u_4^{-1})= \psi _5(u_4)\psi_5( u_i)\psi_5( u_4)^{-1}.
\end{equation}
Indeed,
$$\psi _5(u_4 u_i u_4^{-1})=\psi _5(t_{i4}u_i)=t_{i4}t_{i5}u_i,$$
and on the other hand,
\begin{align*}
& \psi _5(u_4)\psi_5( u_i)\psi_5( u_4)^{-1}=(\delta u_1u_4)( t_{i5}u_i)( \delta ^{-1}u_4^{-1}u_1^{-1})  \\
& =t_{i5}u_1(u_4u_iu_4^{-1})u_1^{-1}= t_{i4}t_{i5}u_i.
\end{align*}
Since $\psi _5$ is an automorphism of the ring $R_2$ we conclude that the ring $R_3$ generated by $u_1,u_2,u_3,u_4,u_5$ is a crossed product of $C_2$ over $R_2$.
In particular it is an associative algebra.
Finally, we construct a $C_2=\langle x_6 \rangle$-action on $R_3$ in the following way.
\begin{align*}
\psi _6: & R_3\rightarrow R_3\\
& u_1\mapsto t_{16}u_1\\
&          u_2\mapsto u_2\\
 &          u_3\mapsto u_3\\
 &          u_4\mapsto \gamma u_2u_4\\
 &          u_5\mapsto \lambda u_3u_5
\end{align*}
Now, we search conditions on $t_{ij},\delta,\gamma,\lambda=\pm 1$, such that $\psi _6$ will be an automorphism of the ring $R_3$ .
Similarly to the proof that $\psi _5$ is an automorphism of $R_2$, the restriction of $\psi _6$ to $R_2$ is also an automorphism.
We need to check if the following equality holds for any $1\leq i\leq 4$.
\begin{equation}
\psi _6(u_5 u_i u_5^{-1})= \psi _6(u_5)\psi_6( u_i)\psi_6( u_5)^{-1}.
\end{equation}
For $1\leq i\leq 3$ we have on one side
$$\psi _6(u_5 u_i u_5^{-1})=\psi _6(t_{i5}u_i)=t_{i5}t_{i6}u_i.$$
And on the other hand
$$\psi _6(u_5)\psi_6( u_i)\psi_6( u_5)^{-1}=\lambda u_3u_5 t_{i6}u_i  u_5^{-1}u_3^{-1}\lambda ^{-1}=t_{i6}u_3t_{i5}u_iu_3^{-1}=t_{i5}t_{i6}u_i.$$
For $i=4$ we get on one side
$$\psi _6(u_5 u_4 u_5^{-1})=\psi _6(\delta u_1u_4)=\delta t_{16}u_1\gamma u_2u_4.$$
On the other hand
\begin{align}
& \psi _6(u_5)\psi_6( u_4)\psi_6( u_5)^{-1}=(\lambda u_3u_5)( \gamma u_2u_4)( u_5^{-1}u_3^{-1}\lambda ^{-1})=\\
& \gamma u_3u_5u_2(u_5^{-1}u_5)u_4u_5^{-1}u_3^{-1}=\gamma u_3 t_{25}u_2\delta u_1u_4u_3^{-1}= \nonumber \\
& \gamma \delta t_{25}u_1u_2 u_3 u_4u_3^{-1}=\gamma \delta t_{25}u_1 t_{34} u_1u_2 u_4.
\end{align}
Therefore, equality holds if and only if
\begin{equation}\label{eq:eqaut}
t_{16}=t_{25}t_{34}.
\end{equation}
For $t_{16},t_{25},t_{34}\in \{1,-1\}$ which satisfy~\eqref{eq:eqaut}, $\psi _6$ is an automorphism
and then the ring $R$ generated by $u_1,u_2,u_3,u_4,u_5,u_6$ is a crossed product of $C_2$ over $R_3$.
In particular it is an associative algebra.

Let $t_{16}=t_{34}=t_{24}=t_{35}=-1$ and all the other $t_{ij}=1$ and $\lambda =\gamma=\delta=1$.
Then, the ring $R$ is a complex associative algebra generated by $u_1,u_2,u_3,u_4,u_5,u_6$ with the following relations.
\begin{equation}
[u_1,u_6]=[u_2,u_4]=[u_3,u_4]=[u_3,u_5]=-1.
\end{equation}
For all the other $1\leq i \leq 3, 4\leq j\leq 6$ we have $[u_i,u_j]=1$.
Also for all $1\leq k\leq 6$, $[u_i,u_{i^{\shortmid}}]=1$ for all $1\leq i,i^{\shortmid}\leq 3$,
$[u_4,u_5]=u_1$, $[u_4,u_6]=u_2$ and $[u_5,u_6]=u_3$.
The associativity of $R$ ensures that there exists a cocycle $f\in Z^2(G,\C ^*)$ which satisfies the above relations.
We need to show the cocycle $f$ is nondegenerate.
It easy to show that there are no $f$-regular elements in $G$ and hence $f$ is nondegenerate.
Indeed, for $g\not \in Z(G)$ there exists an element $h\in Z(G)$ such that $[u_g,u_h]\neq 1$.
On the other hand, for any $h\in Z(G)$ there exists an element $g\not \in Z(G)$ such that $[u_g,u_h]\neq 1$.
Since any element in $Z(G)$ is $(\lambda,f)$-regular and since $|Z(G)|=|\hat{G}|$ we can conclude that
all the $(\lambda,f)$-regular elements in $G$ are central and hence contained in $G^{\shortmid}=Z(G)$.
By Corollary~\ref{cor:Hall}, Sz$(\mathbb{C}^fG)$ is commutative. \qed

By Proposition~\ref{th:p^4} and Remark~\ref{remark:sf} the group $G$ presented in Theorem 5
is minimal with the property that $(\mathbb{C}^fG)$ is simple and Sz$(\mathbb{C}^fG)$ is commutative.
\section{Problem 1 for general cocycles}
As already mentioned, if Sz$(\mathbb{C}^fG)$ is simple then $\mathbb{C}^fG$ is also simple.
As for the commutative semi-center, there are many obvious examples.
Clearly, if $G$ is an abelian group then Sz$(\C G)=\C G$ is commutative.
Since the cohomology of cyclic groups is trivial, then another family of examples is any cocycle $f\in Z^2(G,\C ^*)$ on a group $G$ such that $\hat{G}$ is cyclic.

Next, we present an example of a group $G$ of order $8$ such that $\hat{G}$ is not cyclic,
a cocycle $f\in Z^2(G,\C ^*)$ such that $\C ^fG$ is non-commutative and still
Sz$(\mathbb{C}^fG)$ is commutative.
\begin{example}
Let
 $$G=D_4=\{\sigma , \tau | \sigma ^4=1, \tau ^2=1, \tau \sigma
\tau =\sigma ^{-1}\}.$$
Recall that the
conjugation classes of $D_4$ are
\begin{equation*}
\{1\},\quad \{\sigma ^2\},\quad \{\sigma, \sigma ^3\},\quad \{\sigma
\tau, \sigma ^3 \tau\},\quad \{\tau, \sigma ^2 \tau\},
\end{equation*}
and 
$$G^{\shortmid}=\{1,\sigma ^2\}.$$
Let $\mathbb{C}^fG$ be the twisted group
algebra spanned by the elements $\{u_g\}_{g\in G}$ where
\begin{equation}\label{eq:totototo}
u_{\tau}u_{\sigma}u_{\tau}^{-1}=iu_{\sigma ^3}.
\end{equation}
By~\eqref{eq:totototo}, $[u_{\sigma ^2},u_{\tau}]=-1$.
Therefore, $\sigma ^2$ is not $f$-regular and in the notations of \cite[Corollary 2.3]{ginosar2012semi}, 
$$\Gamma (1,f)\cap G^{\shortmid}=\{1\}.$$
Hence, by \cite[Corollary 2.3]{ginosar2012semi}
$$\text{dim}_{\C}(\text{Sz}(\mathbb{C}^fG))=4.$$
Consequently, Sz$(\mathbb{C}^fG)$ it either commutative or simple. Since the
semi-center of non-simple twisted group algebra is never simple we conclude that the Sz$(\mathbb{C}^fG)$ is commutative.
\end{example}

\begin{landscape}
    \begin{table}[htb!]
        \centering
       \caption{Groups of order $p^4$ where $p$ is an odd prime} 
        \setlength{\tabcolsep}{2pt}
        \begin{tabular}{l c l}
 \hline\hline 
\\ 
\hline 
$G_{(\text{i})}$ & $C_{p^4}$ &    \\
\hline 
$G_{(\text{ii})}$ & $C_{p^3}\times C_p$ &    \\
\hline 
$G_{(\text{iii})}$ & $C_{p^2}\times C_{p^2}$ &    \\
\hline 
$G_{(\text{iv})}$ & $C_{p^2}\times C_p\times C_p$ &\\
\hline 
$G_{(\text{v})}$ & $C_p^4$ &  \\
\hline
$G_{(\text{vi})}$ & $\langle a,b\rangle$ & $a^{p^3}=b^p=1,\quad aba^{-1}=ba^{p^2}$ \\
\hline 
$G_{(\text{vii})}$ & $\langle a,b,c \rangle$ & $a^{p^2}=b^p=c^p=1,\quad  [a,b]=[a,c]=1,[b,c]=a^p$ \\
\hline 
$G_{(\text{viii})}$ & $\langle a,b \rangle$ & $a^{p^2}=b^{p^2}=1,\quad  [a,b]=a^p$ \\
\hline
$G_{(\text{ix})}$ & $\langle a,b,c \rangle$ & $a^{p^2}=b^p=c^p=1,\quad  [a,b]=[b,c]=1, [a,c]=a^p$ \\
\hline 
$G_{(\text{x})}$ & $\langle a,b,c \rangle$ & $a^{p^2}=b^p=c^p=1,\quad  [a,b]=[b,c]=1, [a,c]=b$ \\
\hline 
$G_{(\text{xi})}$ for $p>3$ & $\langle a,b,c \rangle$ & $a^{p^2}=b^p=c^p=1,\quad  [b,c]=1, [a,b]=a^p,ac=cab$ \\
\hline
$G_{(\text{xii})}$ for $p>3$ & $\langle a,b,c \rangle$ & $a^{p^2}=b^p=c^p=1,\quad  [b,c]=a^p, [a,b]=a^p,ac=cab$ \\
\hline
$G_{(\text{xiii})}$ for $p>3$ & $\langle a,b,c \rangle$ & $a^{p^2}=b^p=c^p=1,\quad  [b,c]=a^{\alpha p}, [a,b]=a^p,ac=cab$, $\alpha=\text{any non-residue } (\text{mod } p)$ \\
\hline
$G_{(\text{xi})}$ for $p=3$ & $\langle a,b,c \rangle$ & $a^9=b^3=c^3=1,\quad  [b,c]=1, [a,b]=a^3,[a,c]=ba^3$ \\
\hline
$G_{(\text{xii})}$ for $p=3$ & $\langle a,b,c \rangle$ & $a^9=b^3=1,c^3=a^3,\quad  [b,c]=1, [a,b]=a^p,[c,a]=ba^3$ \\
\hline
$G_{(\text{xiii})}$ for $p=3$ & $\langle a,b,c \rangle$ & $a^9=b^3=1,c^3=a^6,\quad  [b,c]=1, [a,b]=a^p,[c,a]=ba^3$ \\
\hline
$G_{(\text{xiv})}$  & $\langle a,b,c,d \rangle$ & $a^p=b^p=c^p=d^p=1,\quad  [a,b]=[a,c]=[a,d]=[b,c]=[b,d]=1, [c,d]=a$ \\
\hline
$G_{(\text{xv})}$ for $p>3$ & $\langle a,b,c,d \rangle$ & $a^p=b^p=c^p=d^p=1,\quad  [a,b]=[a,c]=[a,d]=[b,c]=1,[d,b]=a, [d,c]=b$ \\
\hline
$G_{(\text{xv})}$ for $p=3$ & $\langle a,b,c \rangle$ & $a^9=b^3=c^3=1,\quad  [a,b]=1,[a,c]=ba^3,[b,c]=a^3$ \\
\hline
        \end{tabular}
          \label{tab:p4}
    \end{table}
\end{landscape}

\begin{landscape}
    \begin{table}[htb!]
        \centering
       \caption{Groups of order $16$} 
        \setlength{\tabcolsep}{2pt}
        \begin{tabular}{l c l}
 \hline\hline 
\\ 
\hline 
$G_{(\text{i})}$ & $C_{16}$ &    \\
\hline 
$G_{(\text{ii})}$ & $C_{8}\times C_2$ &    \\
\hline 
$G_{(\text{iii})}$ & $C_{4}\times C_{4}$ &    \\
\hline 
$G_{(\text{iv})}$ & $C_{4}\times C_2\times C_2$ &\\
\hline 
$G_{(\text{v})}$ & $C_2^4$ &  \\
\hline
$G_{(\text{vi})}$ & $\langle a,b\rangle$ & $a^{8}=b^2=1,\quad aba^{-1}=ba^4$ \\
\hline 
$G_{(\text{vii})}$ & $\langle a,b,c \rangle$ & $a^{4}=b^2=c^2=1,\quad  [a,b]=[a,c]=1,[b,c]=a^2$ \\
\hline 
$G_{(\text{viii})}$ & $\langle a,b \rangle$ & $a^4=b^4=1,\quad  [a,b]=a^2$ \\
\hline
$G_{(\text{ix})}$ & $\langle a,b,c \rangle$ & $a^4=b^2=c^2=1,\quad  [a,b]=[b,c]=1, [a,c]=a^2$ \\
\hline 
$G_{(\text{x})}$ & $\langle a,b,c \rangle$ & $a^4=b^2=c^2=1,\quad  [a,b]=[b,c]=1, [a,c]=b$ \\
\hline 
$G_{(\text{xi})}$  & $\langle a,b,c \rangle$ & $a^4=c^2=1,a^2=b^2\quad  [a,c]=[b,c]=1, [a,b]=a^2$ \\
\hline
$G_{(\text{xii})}$ & $\langle a,b \rangle$ & $a^8=b^2=1,\quad  [a,b]=a^2$ \\
\hline
$G_{(\text{xiii})}$  & $\langle a,b \rangle$ & $a^8=b^2=1,\quad  [a,b]=a^6$ \\
\hline
$G_{(\text{xiv})}$  & $\langle a,b \rangle$ & $a^8=1, b^2=a^4 \quad  [a,b]=a^6$ \\
\hline
        \end{tabular}
          \label{tab:16}
    \end{table}
\end{landscape}

\end{document}